\newtheorem{theorem}{Theorem}[section]
\newtheorem{lemma}[theorem]{Lemma} 
\newtheorem{proposition}[theorem]{Proposition} 
\newtheorem{corollary}[theorem]{Corollary} 
\theoremstyle{definition}
\newtheorem{definition}[theorem]{Definition}
\newtheorem{example}[theorem]{Example}
\newtheorem{notation}[theorem]{Notation}
\theoremstyle{remark}
\newtheorem{remark}[theorem]{Remark}
\title{
Some Braces of Cardinality $p^{4}$ and Related Hopf-Galois Extensions 
}
\author{
	D.  Pulji\'{c}\thanks{School of Mathematics, The University of Edinburgh, James Clerk Maxwell Building, The Kings Buildings, Mayfield Road EH9 3JZ, Edinburgh, \href{mailto:s1557452@sms.ed.ac.uk}{s1557452@sms.ed.ac.uk}}, A. Smoktunowicz\thanks{School of Mathematics, The University of Edinburgh, James Clerk Maxwell Building, The Kings Buildings, Mayfield Road EH9 3JZ, Edinburgh, \href{mailto: A.Smoktunowicz@ed.ac.uk}{A.Smoktunowicz@ed.ac.uk}}, K. Nejabati Zenouz\thanks{School of Computing and Mathematical Sciences, The University of Greenwich, Queen Mary's Building, Park Row, SE10 9LS, London, \href{mailto:K.NejabatiZenouz@greenwich.ac.uk}{K.NejabatiZenouz@greenwich.ac.uk}}
}
\date{\today}
\begin{document}
	\maketitle
	\begin{abstract}
	We describe all $\mathbb F_{p}$-braces of cardinality $p^{4}$ which are not right nilpotent. Our solution illustrates a general way of investigating  
	$\mathbb F_{p}$-braces of cardinality $p^{n}$ with a given multiplicative group. The constructed braces are left nilpotent, solvable and prime, and they also contain a non-zero strongly nilpotent ideal.   
	\end{abstract}
	\tableofcontents{}
	
	\section{Introduction}\label{S1}
	In 2005, W. Rump introduced braces as a generalisation of Jacobson radical rings in order to help study involutive set-theoretic solutions of the Yang-Baxter equation. Since then braces have been studied solely for their algebraic properties and have been linked to other research areas. For example, T. Gateva-Ivanova showed that braces are in correspondence with braided groups with an involutive braiding operator in \cite{Gateva}. Braces have also been shown to be equivalent to some objects in group theory such as bijective 1-cocycles and regular subgroups of the holomorph and they have been studied in connection to flat manifolds, quantum integrable systems, etc. Further, skew braces were introduced in \cite{VG}, as a generalisation of braces, and D. Bachiller followed to discover that there is a connection between skew braces and Hopf-Galois theory which was further studied in \cite{SVB}.
	
	The above evidences the value of classifying braces, which is a direction researches have taken in recent years. Braces with cyclic additive groups were classified in \cite{Rump1, Rump2} and braces of size $pq$ and $p^2q$ for primes $p$ and $q$ were classified in \cite{pq} and \cite{Diet}, respectively. All braces of cardinality $p^3$ have been described by D. Bachiller \cite{Ba} and all skew braces of the same size by K. Nejabati Zenouz \cite{kayvan2}. In  \cite{V} L. Vendramin suggested that the problem of classification of braces of order $p^n$ could be tackled by considering $\mathbb{F}$-braces, which is the framework of this paper.
	
	The general aim of our paper is to develop ring theoretic themes in the
	main calculations of characterising the braces as opposed to more group theoretic themes. We also partially answer a question asked in \cite{V}. Our main result  is Theorem \ref{24}, which describes all $\mathbb F_{p}$-braces of cardinality $p^{4}$ which  are  not right nilpotent. We also investigate connections of our results with Hopf-Galois extensions.
	
	\section{Background Information}
	Recall that a set $A$ with binary operations $+$ and $*$ is a {\em  left brace} if $(A, +)$ is an abelian group and the following version of distributivity combined with associativity holds:
	\begin{align}\label{BR}
		(a+b+a*b)* c&=a* c+b* c+a* (b* c), \\  a* (b+c)&=a* b+a* c \nonumber
	\end{align}
	for all $a, b, c\in A$.  Moreover, $(A, \circ )$ is a group, where we define \[a\circ b=a+b+a* b.\]
	
	In what follows, we will use the definition in terms of operation `$\circ $' presented in \cite{cjo} (see \cite{rump} for the original definition): a set $A$ with binary operations of addition $+$, and multiplication $\circ $ is a brace if $(A, +)$ is an abelian group, $(A, \circ )$ is a group and for every $a,b,c\in A$
	\[a\circ (b+c)+a=a\circ b+a\circ c.\] 
	Circle algebras related to braces were introduced by Catino and Rizzo in \cite{Catino}. We now recall Definition $2$ from \cite{Rump}, which we state for left braces, as it was originally stated for right braces. Notice that 
	$\mathbb F$-braces are related to circle algebras.
	
	\begin{definition} 
		Let $\mathbb F$ be a field. We say that a left  brace $A$ is an  $\mathbb F$-brace if its additive group is an $\mathbb F$-vector space such that $a*({\alpha }b)={\alpha }(a* b)$ for all $a,b\in A,$
		$ {\alpha }\in \mathbb F$. Here $ a*b=a \circ b -a -b$.
	\end{definition}
	In \cite{rump} Rump introduced {\em left nilpotent}  and  {\em right nilpotent}  braces and radical chains $A^{i+1}=A*A^{i}$ and $A^{(i+1)}=A^{(i)}*A$  for a left brace $A$, where  $A=A^{1}=A^{(1)}$. Recall that a left brace $A$ is left nilpotent if there is a number $n$ such that $A^{n}=0$, where inductively $A^{i}$ consists of sums of elements $a*b$ with $a\in A, b\in A^{i-1}$. A left brace $A$ is right nilpotent if there is a number $n$ such that $A^{(n)}=0$, where $A^{(i)}$ consists of sums of elements $a*b$ with $a\in A^{(i-1)}, b\in A$. Strongly nilpotent braces and the chain of ideals $A^{[i]}$ of a brace $A$ were defined in \cite{Engel}. Define $A^{[1]}=A$ and $A^{[i+1]}=\sum_{j=1}^{i}A^{[j]}*A^{[i+1-j]}$. A left brace $A$ is {\em strongly nilpotent} if there is a number $n$ such that $A^{[n]}=0$, where $A^{[i]}$ consists of sums of elements $a*b$ with $a\in A^{[j]}, b\in A^{[i-j]}$ for all $0<j<i$. 
	
	All braces and $\mathbb F_{p}$-braces considered in this paper are left braces.
	
	\section{Groups and Braces}
	In  Example $2$ in \cite{rump} Rump gave an example of a right brace which is right but not left nilpotent. By taking the opposite multiplication in this brace we obtain a left brace of cardinality $2^{4}$ which is left but not right nilpotent. Moreover this brace is an $\mathbb F_{2}$-brace as it is a vector space over $\mathbb F_{2}$. In this section we show that, for a prime number $p>3$,  multiplicative groups of left $\mathbb F_{p}$-braces of cardinality $p^{4}$ which are not right nilpotent are isomorphic to either group  XIV or group  XV described below.
	
	We first recall a result of David Bachiller from \cite{DB}.
	\begin{theorem} [Lemma 2.5.1, \cite{DB}]\label{1}
		Let $p$ be a prime number, and let $(B, +, \circ )$ be a finite left brace with $(B,+)$ isomorphic to $C_{p}^{m}$. Assume $m + 1 \leq  p$. Then, the order of each non-trivial element in the multiplicative group $(B, \circ )$ is equal $p$.
	\end{theorem}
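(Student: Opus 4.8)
The plan is to phrase everything through the $\lambda$-map of the brace. For $a\in B$ put $\lambda_a(b)=a*b+b$; it is a standard fact (\cite{cjo,rump}) that each $\lambda_a$ lies in $\operatorname{Aut}(B,+)$, that $a\mapsto\lambda_a$ is a group homomorphism $(B,\circ)\to\operatorname{Aut}(B,+)$, and that $a\circ b=a+\lambda_a(b)$. Since $(B,+)\cong C_{p}^{\,m}$, the additive group is an $m$-dimensional $\mathbb{F}_{p}$-vector space and $\operatorname{Aut}(B,+)=\operatorname{GL}_{m}(\mathbb{F}_{p})$; moreover $(B,\circ)$ has order $p^{m}$, so it is a finite $p$-group and its identity element is $0$. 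Thus the statement to prove is that $(B,\circ)$ has exponent dividing $p$.

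First I would establish, by induction on $n$ using $\lambda_{a^{\circ n}}=\lambda_a^{\,n}$ together with $a\circ b=a+\lambda_a(b)$, the power formula
\[
a^{\circ n}=\bigl(1+\lambda_a+\lambda_a^{\,2}+\dots+\lambda_a^{\,n-1}\bigr)(a).
\]
Taking $n=p$, it then suffices to prove that the additive endomorphism $S:=\sum_{i=0}^{p-1}\lambda_a^{\,i}$ is the zero map on $B$, for every $a\in B$.

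Next I would use unipotence. Being in the image of the homomorphism $\lambda$ out of the $p$-group $(B,\circ)$, the automorphism $\lambda_a$ has $p$-power order in $\operatorname{GL}_{m}(\mathbb{F}_{p})$, hence satisfies $\lambda_a^{\,p^{r}}=1$ for some $r$; since $x^{p^{r}}-1=(x-1)^{p^{r}}$ in $\mathbb{F}_{p}[x]$, the operator $N:=\lambda_a-1$ is nilpotent, and as $B$ is $m$-dimensional, $N^{m}=0$. Writing $\lambda_a^{\,i}=(1+N)^{i}=\sum_{k\ge 0}\binom{i}{k}N^{k}$ and summing with the hockey-stick identity $\sum_{i=0}^{p-1}\binom{i}{k}=\binom{p}{k+1}$ gives
\[
S=\sum_{k\ge 0}\binom{p}{k+1}N^{k}.
\]
In characteristic $p$ the coefficient $\binom{p}{k+1}$ vanishes for $1\le k+1\le p-1$ and equals $1$ for $k+1=p$, so $S=N^{\,p-1}$. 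The hypothesis $m+1\le p$ yields $p-1\ge m$, whence $N^{\,p-1}=0$; therefore $a^{\circ p}=S(a)=0$ for all $a\in B$, so $(B,\circ)$ has exponent dividing $p$ (and exponent exactly $p$ once $m\ge 1$), as claimed.

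The computation is short once the pieces are assembled, and the only genuinely load-bearing observation is that the hypothesis $m+1\le p$ is precisely what is needed at the end: it simultaneously guarantees, via $N^{m}=0$, that the sole surviving term $N^{\,p-1}$ of $S$ vanishes, while the vanishing modulo $p$ of the binomial coefficients $\binom{p}{k+1}$ disposes of every lower term. The remaining ingredients — the $\lambda$-formalism, the power formula, the fact that a $p$-element of $\operatorname{GL}_{m}(\mathbb{F}_{p})$ is unipotent, and the binomial identity — are all routine.
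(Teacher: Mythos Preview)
The paper does not supply its own proof of this statement: Theorem~\ref{1} is quoted verbatim from Bachiller \cite{DB} and used as a black box. Your argument is therefore not being compared against anything in the paper itself, but it is a correct and complete proof of the cited result.

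Your approach is in fact the standard one. The three ingredients --- the power formula $a^{\circ n}=\bigl(\sum_{i=0}^{n-1}\lambda_a^{\,i}\bigr)(a)$, the unipotence of $\lambda_a$ as an element of $\operatorname{GL}_m(\mathbb{F}_p)$ of $p$-power order, and the identity $\sum_{i=0}^{p-1}(1+N)^i=N^{p-1}$ in characteristic $p$ --- are exactly what Bachiller uses. The role of the hypothesis $m+1\le p$ is identified correctly: it is precisely what forces $N^{p-1}=0$ via $N^m=0$. One tiny remark: you could shortcut the final step by observing directly that every coefficient $\binom{p}{k+1}$ with $0\le k\le m-1\le p-2$ vanishes mod~$p$, so $S=0$ already without ever writing $S=N^{p-1}$; but your version is equally valid.
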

	Therefore, every element in the multiplicative group of any $\mathbb F_{p}$-brace of cardinality $p^{4}$ for $p>3$ has order $p$. In \cite{Burnside} Burnside described the list of all braces of cardinality $p^{4}$ for prime  $p>3$. 
	
	Notice that if a brace  $A$ of cardinality $p^{4}$ has an abelian multiplicative group $(A, \circ )$, then it is a ring, and hence it is both left and right nilpotent \cite{cjo}. Therefore, a left nilpotent $\mathbb F_{p}$-brace which is left but not right nilpotent has a non-abelian multiplicative group. Therefore, using the list from pages 100, 101  from \cite{Burnside} we have the following possibilities of non-abelian  groups of cardinality $p^{4}$ in which each element has order $p$.
	\begin{itemize}
		\item (Group XIV) Group  (with multiplication $\circ $) generated by elements $P, Q, R, S$ which is the direct product of the group generated by $Q$ and the group generated by elements  $P,  R, S$. Moreover, $P\circ R=R \circ P$, $P\circ S=S \circ P$, $Q\circ P=P\circ Q$ and $R\circ S=S\circ R\circ P$.
		
		\item (Group  XV) Group (with multiplication $\circ $) generated by elements $P, Q$, $R, S$ with $P$ being in the centre of this group, and the centre of this group consists of elements $P^{i}$. Moreover, $R\circ Q=Q\circ R$ and $Q\circ S=S\circ Q\circ P$ and $R\circ S=S\circ R\circ Q$. 
	\end{itemize}
	Observe that group XIV has centre of cardinality $p^{2}$ and group $XV$ has centre of cardinality $p$, where by centre we mean the set of elements which commute under operation $\circ $ with all elements in $A$.
	
	We shall let $p$ be a prime number larger than $3$. The aim of this paper is to characterise all not right nilpotent braces of cardinality $p^{4}$. We will proceed in the following way which may be also applied in other situations.
	\begin{itemize}
		\item Let $G$ be a group of order $p^{n}$ and $g_{1}, g_{2}, \ldots , g_{n}$ be its generators. Let $f_{1}, f_{2}, \ldots , f_{n}$ be the defining relations of our group $G$.
		\item We will characterise $\mathbb F_{p}$-braces $A$ for which $A=\mathbb F_{p}g_{1}+\mathbb F_{p}g_{2}+\ldots +\mathbb F_{n}g_{n}$.
		\item We write $g_{i}*g_{j}=\sum_{0<k\leq n}\alpha _{i,j,k}g_{k}$ where $\alpha _{i,j,k}\in \mathbb F_{p}$ are unknown. As usual $a*b=a\circ b-a-b$ in our brace.  
		\item We substitute  these equations in the relations $f_{1}, f_{2}, \ldots , f_{n}$ as well as in the relations  $f_{i}*g_{j}$ for $i,j\leq n$ and find $\alpha _{i,j,k}$ such that the above equations hold.
		This gives necessary conditions for our brace to be well-defined.
		\item To check if the brace is well-defined we proceed as follows: We consider the linear space $B=\mathbb F_{p}g'_{1}+\ldots \mathbb F_{p}g'_{n}$ for some elements $g_{1}', g_{2}', \ldots , g_{n}'$ from some set $X$. We define maps  \[\lambda _{g_{i}}(g'_{j})=g'_{j}+\sum_{0<k\leq n}\alpha _{i,j,k}g'_{k}.\]
		\item  Next we define  maps $\lambda _{g}$ for $g\in G$ inductively by using the formula $\lambda _{g\circ h}(b)=\lambda _{g}(\lambda _{h}(b))$ for $b\in B$. To construct a well defined brace we then use Theorem 2.1 from \cite{Rio} with the cocycle $f(g)=\lambda _{g}(1)-1$. Notice that to be able to apply Theorem $2.1$ we need to show that $f:G\rightarrow B$ is a bijective cocycle. In our case it is done in section \ref{BC}.
		\item In this way we  can construct all $\mathbb F_{p}$-braces with multiplicative group $G$ and generators $g_{1}', g_{2}', \ldots , g_{n}'$ and such that $G=\mathbb F_{p}g_{1}'+\ldots +\mathbb F_{p}g_{n}'$ (see Section \ref{allbraces} for a method).
		\item For braces with multiplicative group XV  we can show that we can  assume that $A=\mathbb F_{p}g_{1}+\ldots +\mathbb F_{n}g_{n}$, provided that the brace is not right nilpotent. This then implies that we constructed all not right nilpotent braces with the multiplicative group $G$.
		\item We also show that all $\mathbb F_{p}$-braces with multiplicative group XIV are right nilpotent, and hence strongly nilpotent. Braces of cardinality $p^{4}$ for $p=2,3$ can be calculated using the GAP package.
	\end{itemize}
	
	\section{Braces whose Multiplicative Group is XV}
	Let $(G, \circ )$ be the group XV, so the free group generated by elements $P, Q, R, S$ subject to relations 
	\[Q\circ S=S\circ Q\circ  P,\]
	\[Q\circ R=R\circ Q,\]
	\[P\circ S=S\circ P, P\circ Q=Q\circ P, P\circ R=R\circ P,\]
	\[R\circ S=S\circ R\circ Q.\]
	\[P^{p}=Q^{p}=R^{p}=S^{p}=1_{G}.\]
	Let $(A, +, \circ)$ be a brace whose multiplicative group is $G$. As usual we have $g\circ h=g*h+g+h$. 
	
	We assume that $A$ is not right nilpotent. We recall a result of Rump \cite{rump}.
	\begin{theorem}[\cite{rump}]
		If $A$ is a brace of a cardinality $p^{n}$ for some prime number $p$ and some natural $n$, then $A^{n+1}=0$ where $A^{1}=A$ and $A^{i+1}=A*A^{i}$. 
	\end{theorem}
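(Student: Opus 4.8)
The plan is to study the brace through the map $\lambda\colon (A,\circ)\to\operatorname{Aut}(A,+)$ defined by $\lambda_{a}(b)=a\circ b-a$, equivalently $\lambda_{a}(b)=b+a*b$, and to identify the chain $A\supseteq A^{2}\supseteq\cdots$ with the augmentation filtration of a group ring, where a Nakayama-type argument finishes the job. First I would recall the standard facts that $\lambda$ is a group homomorphism, each $\lambda_{a}$ is an additive automorphism of $(A,+)$, and $a*b=(\lambda_{a}-\operatorname{id})(b)$. Since $|A|=p^{n}$, the group $(A,\circ)$ is a finite $p$-group, hence so is its homomorphic image $P:=\lambda(A)\leq\operatorname{Aut}(A,+)$.

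Writing $I_{P}\subseteq\mathbb{Z}[P]$ for the augmentation ideal, the key step is the identity $A*X=I_{P}X$ for every $P$-stable additive subgroup $X\subseteq A$. Indeed, by the distributivity law $A*X$ is the additive subgroup generated by the elements $(\lambda_{a}-1)(x)$ with $a\in A$, $x\in X$; since $\{\lambda_{a}:a\in A\}$ generates $P$, and $\sigma\tau-1=(\sigma-1)\tau+(\tau-1)$ in $\mathbb{Z}[P]$, using that $X$ is $P$-stable one sees that this subgroup coincides with the one generated by $(\sigma-1)(x)$ for all $\sigma\in P$, $x\in X$, which is precisely $I_{P}X$. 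An easy induction now shows that each $A^{i}$ is a $P$-stable additive subgroup: $A^{1}=A$ is clear, and if $A^{i}$ is $P$-stable then $A^{i+1}=A*A^{i}=I_{P}A^{i}$ is again $P$-stable (the product of a two-sided ideal with a submodule) and contained in $A^{i}$. Iterating, $A^{i+1}=I_{P}^{\,i}A$ for all $i\geq1$, so it remains to see that this descending chain reaches $0$ within $n$ steps.

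The decisive point — and the only place where the prime $p$ and finiteness really enter — is the claim that $I_{P}X\subsetneq X$ for every nonzero finite abelian $p$-group $X$ carrying a $P$-action. For this I would reduce modulo $p$: $X/pX$ is a finite module over $\mathbb{F}_{p}[P]$, and since $P$ is a finite $p$-group the augmentation ideal of $\mathbb{F}_{p}[P]$ is its Jacobson radical; Nakayama's lemma then forces $X/pX=0$ whenever $I_{P}X=X$, so $X$ is $p$-divisible and hence $X=0$. Applying this with $X=A^{i}$ gives $A^{i+1}\subsetneq A^{i}$ whenever $A^{i}\neq0$, so $p^{n}=|A^{1}|>|A^{2}|>\cdots$ is a strictly decreasing chain of powers of $p$; therefore $|A^{n+1}|\leq p^{0}=1$, i.e.\ $A^{n+1}=0$.

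The argument is uniform in $p$, so nothing special is needed for small primes, and the only ingredient I would not reprove is the standard fact that the augmentation ideal of the $\mathbb{F}_{p}$-group algebra of a finite $p$-group is nilpotent (equivalently, equals the radical); everything else is bookkeeping with the $\lambda$-map. The main obstacle, then, is not any single hard estimate but getting the translation $A^{i+1}=I_{P}^{\,i}A$ stated precisely enough that the Nakayama step applies to the general finite abelian $p$-group $(A,+)$ rather than only to an elementary abelian one.
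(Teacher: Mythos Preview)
The paper does not supply a proof of this theorem; it is quoted from Rump's original article and used as a black box, so there is nothing to compare your argument against on the paper's side.

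Your argument is correct and is essentially the standard one. The translation $A^{i+1}=I_{P}^{\,i}A$ via the $\lambda$-map is clean, and the Nakayama step (equivalently, nilpotency of the augmentation ideal of $\mathbb{F}_{p}[P]$ for a finite $p$-group $P$) is exactly what forces the strict descent. One minor simplification: since $P=\lambda(A)$ by definition rather than merely being generated by the $\lambda_{a}$, you do not need the identity $\sigma\tau-1=(\sigma-1)\tau+(\tau-1)$ to establish $A*X=I_{P}X$; it is enough to recall that $I_{P}$ is additively generated by the elements $\sigma-1$ with $\sigma\in P$, so both sides are the additive subgroup generated by the $(\lambda_{a}-1)(x)$.
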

	We get the following Corollary.
	\begin{corollary}\label{5}
		Let $A$ be a brace of cardinality $p^{4}$. If $A^{4}\neq 0$, then $A^{i}/A^{i+1}$ has cardinality $p$ for $i=1,...,4$. 
	\end{corollary}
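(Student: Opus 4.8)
The plan is to obtain this by a pure counting argument, combining the theorem of Rump just quoted (which for $n=4$ gives $A^{5}=0$) with two elementary properties of the chain $A=A^{1}\supseteq A^{2}\supseteq\cdots$. \emph{First}, I would record that each $A^{i}$ is, by its very definition, the additive subgroup of $A$ generated by the elements $a*b$ with $a\in A$ and $b\in A^{i-1}$; since $(A,+)$ is an abelian group of order $p^{4}$, each $A^{i}$ is then a subgroup of $(A,+)$, and hence $|A^{i}|=p^{k_{i}}$ for some integer $0\le k_{i}\le 4$, with $k_{1}=4$. This is what makes a counting argument possible at all.

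\emph{Second}, I would establish that the chain is descending and that it "collapses to zero" as soon as two consecutive terms agree. Using right distributivity $a*(b+c)=a*b+a*c$ one has $A*X\subseteq A*Y$ whenever $X\subseteq Y$ are additive subgroups; together with the obvious inclusion $A^{2}=A*A\subseteq A$, an easy induction gives $A^{i+1}=A*A^{i}\subseteq A*A^{i-1}=A^{i}$, so $A=A^{1}\supseteq A^{2}\supseteq A^{3}\supseteq A^{4}\supseteq A^{5}$. Moreover, if $A^{i}=A^{i+1}$ for some $i$, then $A^{i+2}=A*A^{i+1}=A*A^{i}=A^{i+1}$, and inductively $A^{j}=A^{i}$ for all $j\ge i$. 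Now apply Rump's theorem: $A^{5}=0$, so $k_{5}=0$. If one of the inclusions $A^{i+1}\subseteq A^{i}$ with $1\le i\le 4$ were an equality, the collapsing property would force $A^{i}=A^{5}=0$, and hence $A^{4}\subseteq A^{i}=0$, contradicting the hypothesis $A^{4}\neq 0$. Therefore $4=k_{1}>k_{2}>k_{3}>k_{4}>k_{5}=0$ is a strictly decreasing sequence of five integers in $\{0,1,2,3,4\}$, which leaves no option but $k_{i}=5-i$. Consequently $|A^{i}/A^{i+1}|=p^{\,k_{i}-k_{i+1}}=p$ for $i=1,2,3,4$, which is the claim.

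There is no genuine obstacle in this argument — once Rump's bound $A^{5}=0$ is in hand, the conclusion is forced by cardinalities. The only points that deserve a line of care are that the $A^{i}$ are authentic additive subgroups of $(A,+)$, so that their orders are powers of $p$, and the elementary stabilisation observation that $A^{i}=A^{i+1}$ propagates up the chain; both are immediate from the definitions and the identity $a*(b+c)=a*b+a*c$.
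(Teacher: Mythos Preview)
Your proof is correct and follows essentially the same route as the paper: both argue that the $A^{i}$ are additive subgroups of the $p$-group $(A,+)$, that equality $A^{i}=A^{i+1}$ propagates along the chain to force $A^{i}=A^{5}=0$ (contradicting $A^{4}\neq 0$), and then finish by the counting observation that a strictly decreasing chain of $p$-power orders from $p^{4}$ down to $1$ must drop by exactly a factor of $p$ at each step. Your write-up is somewhat more explicit about the descending-chain and stabilisation steps, but there is no substantive difference in method.
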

	\begin{proof} 
		Notice that if $A^{i}/A^{i+1}$ has cardinality smaller than $p$, then $A^{i}=A^{i+1}$, since $A^{i}$ is an abelian group under the operation $+$. Suppose that $A^{i}=A^{i+1}$ for some $i$. Then $A^{i}=A^{i+1}=A*A^{i+1}$ for some $i<4$. Then $A^{i}=A*A^{i}=A*{A^{i+1}}=A^{i+2}$. Continuing in this way we get that $A^{i}=A^{4}=A^{5}=0$, which is a contradiction, since we assumed that $A^{n}\neq 0$. Now we find a chain of subgroups of index at least $ p $ given by $ A^{4} \subset A^{3} \subset A^{2} \subset A $ which implies that $A^{i}/A^{i+1}$ has cardinality $p$ since the size of $ A $ is $ p^{4} $.
	\end{proof}

	\begin{notation}
		Let  $(A, +, \circ )$ be a brace. For an element $a\in A$ we will denote $a^{-1}$  an element in $A$ such that $a\circ a^{-1}=1=0$. Similarly, by $a^{n}$, sometimes written at $ a^{n\circ} $, we will denote the product of $n$ elements $a$ under the operation $\circ $.
	\end{notation}
	Note that in any brace the identity element $0$ of the additive group  $(A,+)$ coincides with the identity element $1$ of the multiplicative group $(A, \circ)$.

	\subsection{Some Supporting Lemmas}
	Recall that the $\lambda $ map in a brace $(A, +, \circ)$ is defined for $a,b\in A$ by \[ \lambda_{a}(b)=a*b+b=a\circ b-a.  \]
	It is known that \[\lambda _{a\circ b}(c)=\lambda _{a}\left(\lambda _{b}(c)\right).\]

	\begin{lemma}\label{2} 
		Let $A$ be an $ \mathbb{F}_{p} $-brace of cardinality $p^{4}$. Then for $a,b,c\in A$ we have 
		\begin{align*}
			\left(a^{-1}\circ b^{-1}\circ a\circ b\right)*c&=a*\left(b*c\right)-b*\left(a*c\right)\\
			&+a*\left(b*\left(a*c\right)\right)+b*\left(b*\left(a*c\right)\right)\\
			&-b*\left(a*\left(b*c\right)\right)-a*\left(a*\left(b*c\right)\right).
		\end{align*}
	\end{lemma}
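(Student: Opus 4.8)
The plan is to reduce everything to the lambda maps and the operators $L_a\colon A\to A$, $L_a(x)=a*x$, noting $\lambda_a=\mathrm{id}+L_a$. We will use three facts: $\lambda_a$ is an additive map (true in any left brace); $\lambda_{g\circ h}=\lambda_g\circ\lambda_h$, as recalled above; and, since $|A|=p^{4}$, Rump's theorem gives $A^{5}=0$. The last point is the engine of the proof: because $L_x(A^{k})\subseteq A\ast A^{k}=A^{k+1}$ for every $x\in A$, any composite $L_{x_1}L_{x_2}L_{x_3}L_{x_4}$ sends $A$ into $A^{5}=0$, so every product of four (or more) operators $L_x$ is the zero map. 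In particular $L_a^{4}=L_b^{4}=0$.

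Since $L_a$ is nilpotent, $\lambda_a=\mathrm{id}+L_a$ is invertible with $\lambda_a^{-1}=\mathrm{id}-L_a+L_a^{2}-L_a^{3}$, and applying $\lambda$ to $a\circ a^{-1}=0$ together with $\lambda_0=\mathrm{id}$ gives $\lambda_{a^{-1}}=\lambda_a^{-1}$; likewise for $b$. Writing $w=a^{-1}\circ b^{-1}\circ a\circ b$, this yields
\[
\lambda_{w}=\lambda_{a}^{-1}\lambda_{b}^{-1}\lambda_{a}\lambda_{b}
=(\mathrm{id}-L_a+L_a^{2}-L_a^{3})(\mathrm{id}-L_b+L_b^{2}-L_b^{3})(\mathrm{id}+L_a)(\mathrm{id}+L_b).
\]
Now I would multiply out this product of four polynomials in the noncommuting symbols $L_a,L_b$, discarding every monomial of total degree $\ge 4$ (each is the zero operator by the first paragraph). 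The degree-$0$ term is $\mathrm{id}$, the degree-$1$ terms cancel (as they must, $w$ being a commutator), and collecting the degree-$2$ and degree-$3$ survivors leaves
\[
\lambda_{w}=\mathrm{id}+L_aL_b-L_bL_a+L_aL_bL_a+L_b^{2}L_a-L_a^{2}L_b-L_bL_aL_b.
\]
Subtracting $\mathrm{id}$ gives $L_w=\lambda_w-\mathrm{id}$, and evaluating at $c$ while translating $L_x(y)=x*y$ back into $*$-notation (so $L_aL_b(c)=a*(b*c)$, $L_aL_bL_a(c)=a*(b*(a*c))$, and so on) produces precisely the asserted identity.

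The only genuine work is the degree-bounded expansion in the displayed product; it is routine but slightly delicate bookkeeping, and I would organize it by keeping everything modulo monomials of degree $\ge 4$ from the outset. The one conceptual point that must be justified is that all such monomials vanish as operators, which is exactly where the hypothesis $|A|=p^{4}$ enters through $A^{5}=0$ and the filtration $L_x(A^{k})\subseteq A^{k+1}$. Note that the $\mathbb F_p$-structure is not actually used beyond fixing the ambient setting: additivity of $\lambda_a$ holds in every left brace, so the statement is in fact valid for any brace of cardinality $p^{4}$.
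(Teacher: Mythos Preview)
Your proposal is correct and follows essentially the same route as the paper. The paper's proof is a one-line sketch: it notes $A^{5}=0$, records the truncated geometric-series formula $\lambda_a^{-1}(b)=b-a*b+a*(a*b)-a*(a*(a*b))$, and says ``the result follows by computation''; your argument is precisely that computation, packaged cleanly via the operators $L_a$ and the identity $\lambda_w=\lambda_a^{-1}\lambda_b^{-1}\lambda_a\lambda_b$ expanded modulo degree~$\ge 4$.
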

	\begin{proof} 
		Note we have $A^{5}=0$. Now using properties of $ \lambda $ and the formula
		\[\lambda^{-1}_{a}(b)= b-a\ast b+ a\ast \left(a\ast b\right) -a\ast\left(a\ast \left(a\ast b\right)\right)\ \text{for}\ a,b\in A \]
		the result follows by computation.
	\end{proof}
	Recall that in group XV we have the relation \[Q=R^{-1}\circ S^{-1}\circ R\circ S.\]
	Now, Lemma \ref{2} implies the following.
	\begin{corollary}\label{3} 
		Let $ p>3 $ be a prime. Let $(A, +, \circ)$ be a brace whose multiplicative group is the group XV. Then \[Q*A^{i}\subseteq A^{i+2},\ Q\in A^{2}.\]
		Moreover, \[ Q*Q\in Q*A^{2}\subseteq A^{4}.\]
	\end{corollary}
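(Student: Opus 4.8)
The plan is to substitute the group relation $Q=R^{-1}\circ S^{-1}\circ R\circ S$ into Lemma \ref{2}. Taking $a=R$ and $b=S$ there, the left-hand side becomes $Q*c$, so for every $c\in A$ we get $Q*c$ written as an alternating sum of the six terms $R*(S*c)$, $S*(R*c)$, $R*(S*(R*c))$, $S*(S*(R*c))$, $S*(R*(S*c))$, $R*(R*(S*c))$. The decisive feature is that each of these is obtained from $c$ by applying $*$ at least twice.

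Next I would use the two elementary facts about the left chain: $x*y\in A^{j+1}$ whenever $y\in A^{j}$ (immediate from the definition $A^{j+1}=A*A^{j}$), and the chain is descending, $A^{j+1}\subseteq A^{j}$ (induct from $A*A\subseteq A$). Granting these, for $c\in A^{i}$ the two two-fold products $R*(S*c)$ and $S*(R*c)$ lie in $A^{i+2}$, while the four three-fold products lie in $A^{i+3}\subseteq A^{i+2}$; since the six-term formula of Lemma \ref{2} is valid for every $c$, it follows that $Q*c\in A^{i+2}$, hence $Q*A^{i}\subseteq A^{i+2}$.

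To obtain $Q\in A^{2}$ I would pass to the quotient $A/A^{2}$. Using the standard fact that $A^{2}=A*A$ is an ideal of $A$ — in particular a normal subgroup of $(A,\circ)$ — and that $A/A^{2}$ carries the trivial brace structure, so that $\bar{x}\circ\bar{y}=\bar{x}+\bar{y}$ and the group $(A/A^{2},\circ)$ is abelian, the relation $Q=R^{-1}\circ S^{-1}\circ R\circ S$ forces the image of $Q$ in $A/A^{2}$ to be trivial, i.e.\ $Q\in A^{2}$. Finally, applying the inclusion $Q*A^{i}\subseteq A^{i+2}$ with $i=2$ to the element $c=Q\in A^{2}$ gives $Q*Q\in Q*A^{2}\subseteq A^{4}$, which is the remaining assertion.

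The bookkeeping with the chain $A^{i}$ is routine; the one step I expect to need a word of care is $Q\in A^{2}$. If one wishes to avoid quoting that $A^{2}$ is an ideal, it is enough to check directly that $A^{2}$ is normal in $(A,\circ)$: for $i\in A^{2}$ and $a\in A$ one has $a\circ i\circ a^{-1}=(a+a^{-1})+\lambda_{a}(i)+(a\circ i)*a^{-1}$, and each summand lies in $A^{2}$ because $a+a^{-1}=-a*a^{-1}$, $\lambda_{a}(i)=a*i+i$ with $a*i\in A^{3}\subseteq A^{2}$, and the last term is a $*$-product.
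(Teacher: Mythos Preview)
Your proposal is correct and follows essentially the same route as the paper: the paper simply records the relation $Q=R^{-1}\circ S^{-1}\circ R\circ S$ and states that the corollary is immediate from Lemma~\ref{2}, which is precisely your substitution $a=R$, $b=S$ together with the observation that every term on the right-hand side involves at least two applications of~$*$. Your quotient argument for $Q\in A^{2}$ is a clean way to make explicit a step the paper leaves implicit; an equally short alternative, closer in spirit to the surrounding computations, is to read off from $R\circ S=S\circ R\circ Q$ that $Q=R*S-S*R-(S\circ R)*Q$, each summand visibly lying in $A^{2}$.
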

	Furthermore, using the relation in group XV \[P=Q^{-1}\circ S^{-1}\circ Q\circ S,\] Lemma \ref{2}, and Corollary \ref{3} we have the following corollary.
	\begin{corollary}\label{4} 
		Let $(A, +, \circ)$ be a brace whose multiplicative group is the group XV. Then  \[P*A^{i}\subseteq A^{i+3},\ P\in A^{3}.\]
		Therefore, \[P*P=0,\ Q*P=0,\ P*Q=0,\ A*P= P*A\subseteq A^{4},\ P*A^{2}=A^{2}*P=0.\]
	\end{corollary}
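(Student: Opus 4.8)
The plan is to exploit the group relation $P = Q^{-1}\circ S^{-1}\circ Q\circ S$ in group XV together with Lemma~\ref{2} and Corollary~\ref{3}. Substituting $a = Q$ and $b = S$ into the identity of Lemma~\ref{2} gives, for every $c\in A$,
\[
P*c = Q*(S*c) - S*(Q*c) + Q*(S*(Q*c)) + S*(S*(Q*c)) - S*(Q*(S*c)) - Q*(Q*(S*c)).
\]
Now fix $i$ and take $c\in A^{i}$. Since $S\in A = A^{1}$ we have $S*A^{j}\subseteq A^{j+1}$, and Corollary~\ref{3} gives $Q*A^{j}\subseteq A^{j+2}$. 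Tracking each of the six summands with these two rules — for instance $S*c\in A^{i+1}$ forces $Q*(S*c)\in A^{i+3}$, and $Q*c\in A^{i+2}$ forces $S*(Q*c)\in A^{i+3}$, while the four triply nested terms land in $A^{i+4}$ or $A^{i+5}$ — every summand lies in $A^{i+3}$. Hence $P*A^{i}\subseteq A^{i+3}$.

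To see $P\in A^{3}$ I would use the conjugation identity
\[
y^{-1}\circ x\circ y - x = y^{-1}*x + x*y + y^{-1}*(x*y),
\]
which holds in any brace (expand $x\circ y = x + \lambda_{x}(y)$, use additivity of the maps $\lambda$ and the relation $y^{-1}\circ y = 0$). Apply it with $x = Q$ and $y = S$. By Corollary~\ref{3} we have $Q\in A^{2}$, so $S^{-1}*Q\in A*A^{2} = A^{3}$; also $Q*S\in Q*A^{1}\subseteq A^{3}$ by Corollary~\ref{3}; and therefore $S^{-1}*(Q*S)\in A*A^{3} = A^{4}\subseteq A^{3}$. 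Thus $S^{-1}\circ Q\circ S - Q\in A^{3}$. Since $A^{3}$ is a left ideal — indeed $a*A^{3}\subseteq A*A^{3} = A^{4}\subseteq A^{3}$ for every $a$ — the map $\lambda_{Q^{-1}}$ sends $A^{3}$ into itself, so
\[
P = Q^{-1}\circ(S^{-1}\circ Q\circ S) = \lambda_{Q^{-1}}\left(S^{-1}\circ Q\circ S - Q\right)\in A^{3},
\]
where the middle equality uses $Q^{-1} = -\lambda_{Q^{-1}}(Q)$ and additivity of $\lambda_{Q^{-1}}$.

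The remaining assertions then follow formally, using $A^{5} = 0$ (which holds since $|A| = p^{4}$, by Rump's theorem quoted above). From $P*A^{i}\subseteq A^{i+3}$ with $i = 2$ we get $P*A^{2}\subseteq A^{5} = 0$, in particular $P*Q = 0$ since $Q\in A^{2}$; with $i = 3$ we get $P*A^{3}\subseteq A^{6} = 0$, hence $P*P = 0$ because $P\in A^{3}$; with $i = 1$ we get $P*A\subseteq A^{4}$; and $Q*P\in Q*A^{3}\subseteq A^{5} = 0$ by Corollary~\ref{3}. Also $A*P\subseteq A*A^{3} = A^{4}$. The equalities $A*P = P*A$ and $A^{2}*P = P*A^{2}$ follow from $P$ lying in the centre of group XV: for any $a\in A$ we have $a\circ P = P\circ a$, hence $a*P = a\circ P - a - P = P\circ a - P - a = P*a$, so in particular $A^{2}*P = P*A^{2} = 0$. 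The step I expect to be the real work is proving $P\in A^{3}$: the inclusion $P*A^{i}\subseteq A^{i+3}$ is a direct substitution into Lemma~\ref{2}, but $P\in A^{3}$ is not a formal consequence of that and needs the conjugation identity, the input $Q*S\in A^{3}$ from Corollary~\ref{3}, and $\lambda$-invariance of the ideal $A^{3}$; everything after that is bookkeeping with $A^{5} = 0$ and the centrality of $P$.
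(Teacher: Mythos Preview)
Your proof is correct and follows the same approach as the paper, which simply cites the relation $P = Q^{-1}\circ S^{-1}\circ Q\circ S$, Lemma~\ref{2}, and Corollary~\ref{3} without spelling out any details. Your explicit conjugation identity $y^{-1}\circ x\circ y - x = y^{-1}*x + x*y + y^{-1}*(x*y)$ and the $\lambda_{Q^{-1}}$-invariance argument for $P\in A^{3}$ fill in exactly the step the paper leaves to the reader.
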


	\begin{proposition}\label{7} 
		Let $ p>3 $ be a prime. Let $(A, +, \circ)$ be a brace which is not right nilpotent with multiplicative group XV. Then $A^{4}\neq 0$ and \[P\notin A^{4}.\]
		Moreover, $A^{2}$ is a brace of cardinality $p^{3}$ and $A^{3}$ is a brace of cardinality $p^{2}$, so \[A^{3}=\mathbb F_{p}P+A^{4}.\]
	\end{proposition}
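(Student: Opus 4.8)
The assertions about $A^{2}$, $A^{3}$ and the decomposition of $A^{3}$ are bookkeeping once the two genuinely new facts, $A^{4}\neq 0$ and $P\notin A^{4}$, are established, so I would isolate those first. The $A^{i}$ form a decreasing chain of sub-braces of $A$ (each is an additive subgroup, closed under $\ast$ and hence under $\circ$), so it is enough to compute cardinalities. Granting $A^{4}\neq 0$, Corollary \ref{5} gives $|A^{i}/A^{i+1}|=p$ for $i=1,\dots,4$, whence $|A^{2}|=p^{3}$, $|A^{3}|=p^{2}$, $|A^{4}|=p$; and since $P\in A^{3}$ by Corollary \ref{4}, once we also know $P\notin A^{4}$ we get $\mathbb F_{p}P\cap A^{4}=0$ (here every additive subgroup is automatically an $\mathbb F_{p}$-subspace, as $\mathbb F_{p}$ is a prime field), so $|\mathbb F_{p}P+A^{4}|=p^{2}=|A^{3}|$ and therefore $A^{3}=\mathbb F_{p}P+A^{4}$.

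The heart of the argument is the reduction: \emph{if $P\ast A=0$, then $A$ is right nilpotent.} Indeed, $P$ lies in the centre of $(A,\circ)$ (it generates the centre of group XV), and $P\ast A=0$ says $\lambda_{P}=\mathrm{id}$, so $P^{k\circ}=kP$ and the cyclic subgroup of $(A,\circ)$ generated by $P$ is precisely the additive line $\mathbb F_{p}P$; being central, it is normal in $(A,\circ)$. Also $A\ast P=P\ast A=0$ by Corollary \ref{4}, so each $\lambda_{a}$ fixes $\mathbb F_{p}P$ pointwise. Hence $\mathbb F_{p}P$ is an ideal of $A$ with $\mathbb F_{p}P\ast A=0$ (since $\lambda_{x}=\mathrm{id}$ for all $x\in\mathbb F_{p}P$), and $\bar A:=A/\mathbb F_{p}P$ is a brace of cardinality $p^{3}$ whose multiplicative group is $G/\langle P\rangle$; the relations of group XV identify the latter with the Heisenberg group of order $p^{3}$. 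Every brace of cardinality $p^{3}$ is right nilpotent (this may be read off the classification in \cite{Ba}), so $\bar A^{(m)}=0$ for some $m$; the quotient brace homomorphism $A\to\bar A$ maps $A^{(m)}$ onto $\bar A^{(m)}=0$, so $A^{(m)}\subseteq\mathbb F_{p}P$, and then $A^{(m+1)}=A^{(m)}\ast A\subseteq\mathbb F_{p}P\ast A=0$, so $A$ is right nilpotent.

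With the reduction in hand, assume $A$ is not right nilpotent. Then $P\ast A\neq 0$, and by Corollary \ref{4} we have $0\neq P\ast A=A\ast P\subseteq A^{4}$, so $A^{4}\neq 0$; by Corollary \ref{5}, $|A^{4}|=p$. If $P\in A^{4}$ then, since $P\neq 0$, we would have $A^{4}=\mathbb F_{p}P$, whence $A\ast P\subseteq A\ast A^{4}=A^{5}=0$ (recall $A^{5}=0$ by Rump's theorem), contradicting $A\ast P\neq 0$. Thus $P\notin A^{4}$, and combining with the first paragraph yields all of the assertions.

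The step I expect to be the main obstacle is the reduction claim: recognising that the single relation $P\ast A=0$ collapses the problem onto a brace of cardinality $p^{3}$, checking carefully that $\mathbb F_{p}P$ really is an ideal with $\mathbb F_{p}P\ast A=0$ (so that the quotient is a brace and right nilpotency pulls back), and having available the fact that every brace of cardinality $p^{3}$ is right nilpotent. Everything after that is a short deduction from Corollaries \ref{3}--\ref{5} and the bound $A^{5}=0$.
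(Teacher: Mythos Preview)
Your argument is correct and follows essentially the same route as the paper: both hinge on the reduction ``$P\ast A=0\Rightarrow \mathbb F_{p}P$ is an ideal with trivial action, so $A/\mathbb F_{p}P$ has cardinality $p^{3}$ and is right nilpotent, hence so is $A$'', and then apply it (in contrapositive form) together with Corollary~\ref{4} and $A^{5}=0$ to force $A^{4}\neq 0$ and $P\notin A^{4}$. Your presentation is slightly more explicit (you isolate the reduction and verify the ideal conditions and the pull-back of right nilpotency in detail), and in the $P\in A^{4}$ case the intermediate identification $A^{4}=\mathbb F_{p}P$ is harmless but unnecessary, since $P\in A^{4}$ already gives $A\ast P\subseteq A^{5}=0$ directly.
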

	\begin{proof}
		Observe first  that $P*a=a*P$ for any $a\in A$ since $P$ is central in $(A, \circ )$. Recall $P\in A^{3}$, so $P*A=A*P\subseteq A^{4}$. Therefore, $A^{4}=0$ implies that $I=\mathbb F_{p}P$ is an ideal in $A$ and $I*A=A*I=0$. Notice that $A/I$ has cardinality $p^{3}$, and it is known that this implies that $A/I$ is right nilpotent \cite{DB}. Now $A*I=I*A=0$ implies that $A$ is right nilpotent. This contradicts the assumption that $A$ is not right nilpotent. 
		
		We will now show that $P\notin A^{4}$. If $P\in A^{4}$, then $A*P\subseteq A^{5}=0$, and since $P$ is central, we have $P*A=0$. Therefore, reasoning similarly to above, $I=\mathbb F_{p}P$ is an ideal in $A$ and $A/I$ is right nilpotent. Therefore $P\notin A^{4}$.
		
		Notice that, by Corollary \ref{5}, $A^{4}\neq 0$ implies $A^{i}/A^{i+1}$ is one dimensional vector space for $i=1,2,3,4$. It follows that $A^{2}$ is a brace of cardinality $p^{3}$ and $A^{3}$ has cardinality $p^{2}$, so $A^{3}=\mathbb F_{p}P+A^{4}$.
	\end{proof} 
	\begin{proposition}\label{aQ} 
		Let $ p>3 $ be a prime. Let $(A, +, \circ)$ be a brace which is not right nilpotent with multiplicative group XV and let $a\in A^{4}$. Then \[ a*Q=P^{\alpha},\] for some $1\leq \alpha \leq p$. 
	\end{proposition}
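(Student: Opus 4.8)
The plan is to show that for $a\in A^{4}$ the brace product $a*Q$ equals the group commutator $a^{-1}\circ Q^{-1}\circ a\circ Q$ in $(A,\circ)$, and then to read off from the presentation of group XV that every $\circ$-commutator with $Q$ lies in the centre $\mathbb{F}_{p}P$. I would start by collecting two vanishing facts. Since $|A|=p^{4}$ we have $A^{5}=0$, so Corollary \ref{3} gives $Q*a\in Q*A^{4}\subseteq A^{6}\subseteq A^{5}=0$; thus $Q*a=0$ and $Q\circ a=Q+a$. Also $a\in A^{4}\subseteq A^{2}$, so Corollary \ref{4} gives $P*a\in P*A^{2}=0$, while $P*Q=0$ is already part of Corollary \ref{4}. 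Combining the general brace identity $a*Q-Q*a=a\circ Q-Q\circ a$ (valid in $(A,+)$) with $Q*a=0$ yields $a*Q=a\circ Q-Q\circ a$.

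Next I would compare $a\circ Q$ with $Q\circ a$. Put $e:=a^{-1}\circ Q^{-1}\circ a\circ Q$, so that $a\circ Q=(Q\circ a)\circ e$ in $(A,\circ)$. Expanding the right-hand side additively through $x\circ y=x+\lambda_{x}(y)=x+x*y+y$ gives $a\circ Q=(Q\circ a)+e+(Q\circ a)*e$, hence
\[
a*Q \;=\; a\circ Q-Q\circ a \;=\; e+(Q\circ a)*e .
\]
Now $e$ is the $\circ$-commutator of $a$ with $Q$; in the quotient group $(A,\circ)/\langle P\rangle$ the defining relations of group XV become $Q\circ R=R\circ Q$ and $Q\circ S=S\circ Q$ (and $P=1$), so the image of $Q$ is central there. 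Hence $e\in\langle P\rangle=\mathbb{F}_{p}P$; write $e=\gamma P$.

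It remains to kill the correction term. Since $P$ is $\circ$-central, $*$ is additive in its second argument, and $Q\circ a=Q+a$, we have $(Q\circ a)*P=P*(Q\circ a)=P*Q+P*a=0$ by the two vanishing facts, so $(Q\circ a)*e=\gamma\big((Q\circ a)*P\big)=0$. Therefore $a*Q=e=\gamma P$. Finally $P*P=0$ forces $P^{k\circ}=kP$ for every $k$, so $\gamma P=P^{\alpha}$ for a suitable $\alpha\in\{1,\dots,p\}$, which is the claim.

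The only step that is not purely mechanical is the opening identification of $a*Q$ with the commutator $a^{-1}\circ Q^{-1}\circ a\circ Q$: it hinges on the two ``small'' facts $Q*a=0$ and $(Q\circ a)*P=0$, which force $Q\circ a=Q+a$ and make the $\lambda$-correction $(Q\circ a)*e$ collapse. Once this is in place the conclusion is immediate from the centre of group XV being $\mathbb{F}_{p}P$; alternatively, Lemma \ref{2} can be used to control $e$ directly (one first checks $A^{4}*A\subseteq A^{3}$, using that $(A,\circ)/A^{3}$ has order $p^{2}$ and exponent $p$ and is hence abelian), but the commutator computation above is the shortest path.
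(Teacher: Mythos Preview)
Your argument is correct, and it is noticeably cleaner than the route taken in the paper. Both proofs pass through the commutator $e=a^{-1}\circ Q^{-1}\circ a\circ Q$ and finish by unpacking $a*Q$ from $e$ once $e$ is known to lie in $\langle P\rangle$; the divergence is in how $e\in\langle P\rangle$ is established. The paper does not use the group presentation at this point: instead it shows via Lemma~\ref{2} that $e\in A^{3}$ and $e*A^{i}\subseteq A^{i+3}$, then forms the second commutator $E_{r}=e^{-1}\circ r^{-1}\circ e\circ r$, checks $E_{r}\in A^{4}$ with $E_{r}*A=A*E_{r}=0$, and invokes the not-right-nilpotent hypothesis to force $E_{r}=0$ (otherwise $\mathbb{F}_{p}E_{r}$ would be an ideal with right nilpotent quotient). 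Only then does centrality of $e$ give $e\in\langle P\rangle$. Your observation that the image of $Q$ in $(A,\circ)/\langle P\rangle$ is already central---immediate from the relations $Q\circ R=R\circ Q$ and $Q\circ S=S\circ Q\circ P$---short-circuits all of this and yields $e\in\langle P\rangle$ for \emph{every} $a\in A$, without ever touching the right-nilpotency assumption. So your proof actually establishes a slightly stronger statement (the hypothesis ``not right nilpotent'' is unnecessary here), at the cost of being less illustrative of the ideal-theoretic technique the paper is developing for later use.
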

	\begin{proof}
		Consider the element $e= a^{-1}\circ Q^{-1}\circ a\circ Q$ where the inverses are taken in the group $(A, \circ)$. Notice that since $Q*A^{i}\subseteq A^{i+2}$ and by Lemma \ref{2} we get \[e\in A^{3},\  e*A^{i}\subseteq A^{i+3}.\]
		For $r\in A$ denote \[E_{r}= e^{-1}\circ r^{-1}\circ e \circ  r.\] 
		Now using similar reasoning as before we get \[ E_{r}\in A^{4},\ E_{r}*A\subseteq A^{5}=0. \]
		Since $E_{r}\in A^{4}$, then $A*E_{r}=0$. This implies that $\mathbb F_{p}E_{r}$ is an ideal in $A$. Therefore, if $E_{r}\neq 0$, then $A/\mathbb F_{p}E_{r}$ is a brace of cardinality $p^{3}$ or less, and hence it  is right nilpotent by \cite{DB}. This implies that $A$ is right nilpotent. Since we only consider not right nilpotent braces this is impossible. Therefore, $E_{r}=0$. This holds for every $r\in R$, so it follows that $e$ is in the centre of group $(A, \circ )$. Because the group $(A, \circ )$ is group $XV$ it follows that $e=P^{i}$ for some  natural number $i$. Therefore, by writing $e$ as sums of products of $Q$ and $a$ and using the fact that $A*a=0$ since $a\in A^{4}$ we get $ a*Q=P^{\alpha}$, for some $0\leq \alpha <p$ (since $A^{3}*A^{3}=A^{3}*(\mathbb F_{p}P+A^{4})=P*A^{3}=0$). 
	\end{proof} 
	\begin{proposition}\label{6} 
		Let $ p>3 $ be a prime. Let $(A, +, \circ)$ be an $\mathbb F_{p}$-brace which is not right nilpotent with multiplicative group XV. Suppose that  $R\notin A^{2}$ and $Q\notin A^{3}$. Then there exist element $\bar S\in A^{4}$ such that the mapping $f: (A, \circ )\rightarrow (A, \circ)$ given by $f(P)=P$, $f(Q)=Q$, $f(R)=R$, $f(S)=\bar S$ is an automorphism of the group $(A, \circ )$. 
	\end{proposition}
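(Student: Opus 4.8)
The plan is to show that, once $P,Q,R$ are fixed, the image of $S$ is essentially constrained to a single coset of an abelian subgroup, and then to choose inside that coset a representative of the required shape.

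\smallskip
First I would record what it means for $f$ to be a homomorphism of $(A,\circ)$. Since $(A,\circ)$ is presented as group XV, the assignment $f(P)=P$, $f(Q)=Q$, $f(R)=R$, $f(S)=\bar S$ extends to an endomorphism if and only if $P,Q,R,\bar S$ satisfy the defining relations of group XV. As $P$ is central and every element of $A$ has multiplicative order $p$ by Theorem~\ref{1}, the only relations that are not automatic are $Q\circ\bar S=\bar S\circ Q\circ P$ and $R\circ\bar S=\bar S\circ R\circ Q$. Writing $\bar S=d\circ S$ with $d=\bar S\circ S^{-1}$ and using that $S$ itself satisfies $S^{-1}\circ Q\circ S=Q\circ P$ and $S^{-1}\circ R\circ S=R\circ Q$, one checks that these two relations for $\bar S$ hold if and only if $d$ commutes with $Q$ and with $R$ (it automatically commutes with the central element $P$). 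The next step is to compute $C:=C_{(A,\circ)}(\{P,Q,R\})$: using the normal form in group XV, conjugation by a word $S^{a}\circ R^{b}\circ Q^{c}\circ P^{d}$ sends $Q$ to $Q\circ P^{a}$, so centralising $Q$ forces $a=0$, whence $C=\langle P,Q,R\rangle_{\circ}$, which is abelian of order $p^{3}$.

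\smallskip
It follows that for \emph{any} $d\in\langle P,Q,R\rangle_{\circ}$ the map $f$ with $f(S)=d\circ S$ is a well-defined endomorphism of $(A,\circ)$, and it is automatically surjective: $\bar S=d\circ S$ differs from $S$ by an element of $\langle P,Q,R\rangle_{\circ}$, so $\langle P,Q,R,\bar S\rangle_{\circ}$ contains $S$ and hence equals $A$. A surjective endomorphism of a finite group is an automorphism, so the proof comes down to choosing $d\in\langle P,Q,R\rangle_{\circ}$ so that $\bar S=d\circ S$ has the form asserted in the statement. For this I would use the $\mathbb F_{p}$-basis $R,Q,P,T$ of $A$ with $A^{4}=\mathbb F_{p}T$ provided by Corollary~\ref{5} and Proposition~\ref{7}: the filtration $A\supsetneq A^{2}\supsetneq A^{3}\supsetneq A^{4}\supsetneq 0$ has one-dimensional quotients, $P\notin A^{4}$, and the hypotheses $R\notin A^{2}$, $Q\notin A^{3}$ put $R,Q,P$ in the successive steps. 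Expanding $S$ in this basis and correcting its coordinates one at a time, from the top of the filtration downwards, by $\circ$-multiplying with suitable elements of $\langle P,Q,R\rangle_{\circ}$ — using Corollaries~\ref{3} and~\ref{4} to locate $Q*A^{i}$ and $P*A^{i}$ inside the filtration, and Proposition~\ref{aQ} for $a*Q$ with $a\in A^{4}$ — one arrives at the desired $\bar S$.

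\smallskip
The main obstacle is that last step: one must understand how left-conjugation by $S$ moves a correcting element $c\in\langle P,Q,R\rangle_{\circ}$ within the filtration, i.e. compute $S\circ c\circ S^{-1}$ modulo $A^{4}$, in order to see which coordinates of $S$ can be cleared and in which order. This is exactly where Lemma~\ref{2} (the commutator expansion) and the precise relations $Q\circ S=S\circ Q\circ P$, $R\circ S=S\circ R\circ Q$ of group XV are needed, and it is here that the not-right-nilpotent hypothesis is really used, via Propositions~\ref{7} and~\ref{aQ}. The remaining verifications are a routine, if slightly lengthy, book-keeping computation with the maps $\lambda_{g}$.
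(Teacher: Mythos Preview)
Your approach is essentially the paper's, and your first two paragraphs are actually more carefully justified than the paper's own argument: the paper simply asserts that $f(S)=S\circ R^{i}\circ Q^{j}\circ P^{k}$ defines a homomorphism for any $i,j,k$, whereas you verify this via the centralizer computation $C_{(A,\circ)}(\{P,Q,R\})=\langle P,Q,R\rangle_{\circ}$.

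Where you diverge from the paper is in your third paragraph, and there you overestimate the difficulty. No conjugation analysis is needed, nor Proposition~\ref{aQ}, nor Lemma~\ref{2}. The only fact required is that for $b\in A^{m}$ one has $a\circ b\equiv a+b\pmod{A^{m+1}}$, since $a*b\in A*A^{m}=A^{m+1}$. The paper then proceeds exactly as you outline in your second paragraph, but without any obstacle: since $R\notin A^{2}$ and $A/A^{2}$ is one-dimensional (Corollary~\ref{5}), some $S_{2}:=S\circ R^{i}\equiv S+iR\pmod{A^{2}}$ lies in $A^{2}$; since $Q\in A^{2}\setminus A^{3}$ and $A^{2}/A^{3}$ is one-dimensional, some $S_{3}:=S_{2}\circ Q^{j}$ lies in $A^{3}$; since $P\in A^{3}\setminus A^{4}$ (Proposition~\ref{7}) and $A^{3}/A^{4}$ is one-dimensional, some $\bar S:=S_{3}\circ P^{k}$ lies in $A^{4}$. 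The ``obstacle'' you describe does not arise, because each correction is applied on top of the previous ones and only needs to be controlled modulo the next term of the filtration; you never have to move a correcting element across $S$.
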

	\begin{proof} 
		Observe that the mapping $f:G\longrightarrow G$ of the group $G=(A, \circ )$ defined on generators of $G$ by \[f(R)=R,\ f(Q)=Q,\ f(P)=P,\ f(S)=S \circ  R^{i} \circ Q^{j} \circ P^{k}\] is a group homomorphism for any $i, j, k$. 
		
		Observe that since $R\notin A^{2}$, then for some $i$ the element  $S_2:=S\circ R^{i}$ will belong to $A^{2}$ (because by Corollary \ref{4} we have that $A/A^{2}$ has dimension $1$ as an $\mathbb F_{p}$-vector space). Notice also that since  $Q\notin A^{3}$, then for some $j$ element $S_{3}:=S_{2}\circ Q^{j}$ will belong to $A^{3}$. 
		
		We know that $P\in A^{3}$ and $P\notin A^{4}$. Consequently, for some $k$ we have that element  $S_{4}=S_{3}\circ P^{k}$  will be in  $A^{4}$.  Therefore $S\circ R^{i}\circ Q^{j}\circ P^{k}\in A^{4}$. We can now define $\bar S=S\circ R^{i}\circ Q^{j}\circ P^{k}$.
		
		It remains to show that $f$ is an automorphism, so the kernel of $f$ is trivial. This follows because the image of a non-trivial element $P^{\alpha }\circ Q^{\beta }\circ R^{\gamma } \circ S^{\xi }$ will be a non-trivial element (alternatively, it is easy to give the formula for the inverse map $f^{-1}$).  
	\end{proof}

%
	\subsection{The Case when $R\in A^{2}$}
	\begin{lemma}\label{11} 
		Let $ p>3 $ be a prime. Let $(A, +, \circ)$ be an $\mathbb F_{p}$-brace whose multiplicative group is the group XV. If $R\in A^{2}$, then $A^{2}$ is a commutative brace. 
	\end{lemma}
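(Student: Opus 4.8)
The plan is to pin down $A^{2}$ completely: I will show that the hypothesis $R\in A^{2}$ forces $A^{2}$ to equal the subgroup $H:=\langle P,Q,R\rangle$ of the multiplicative group $(A,\circ)$, and that $H$ is abelian. Since $a*b-b*a=(a\circ b)-(b\circ a)$ for all $a,b\in A^{2}$, commutativity of $(A^{2},\circ)$ is equivalent to commutativity of the brace $A^{2}$, so this will finish the proof (indeed $*$-commutativity then forces right distributivity on $A^{2}$, making $A^{2}$ a commutative radical ring).

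First I would establish $H\subseteq A^{2}$ and $|H|=p^{3}$. By Corollary~\ref{4} we have $P\in A^{3}\subseteq A^{2}$, by Corollary~\ref{3} we have $Q\in A^{2}$, and $R\in A^{2}$ is the assumption; since $A^{2}$ is an ideal of $A$, it is a subgroup of $(A,\circ)$ (concretely: for $a,b\in A^{2}$ one has $a\circ b=a+b+a*b$ with $a*b\in A*A^{2}=A^{3}\subseteq A^{2}$, and $\circ$-inverses stay in $A^{2}$ because $\lambda_{a}$ permutes the finite set $A^{2}$). Hence $H\subseteq A^{2}$. In group XV the generators $P,Q,R$ pairwise commute under $\circ$ ($P$ is central and $R\circ Q=Q\circ R$), so $H$ is abelian; moreover $H$ is normal in $(A,\circ)$ because $S^{-1}\circ Q\circ S=Q\circ P\in H$ and $S^{-1}\circ R\circ S=R\circ Q\in H$. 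Therefore $(A,\circ)/H$ is cyclic, generated by the image of $S$; since $(A,\circ)$ is non-abelian, $|(A,\circ)/H|=p$, so $|H|=p^{3}$.

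It remains to bound $|A^{2}|$ from above. Left nilpotency of $A$ (which has cardinality $p^{4}$) gives $A^{5}=0$, hence $A^{2}\neq A$ --- otherwise $A^{k}=A$ for all $k$ --- and since $|A|=p^{4}$ this yields $|A^{2}|\le p^{3}$. Together with $H\subseteq A^{2}$ and $|H|=p^{3}$ this forces $A^{2}=H$, which is abelian, and we are done. The one step that needs genuine care is the equality $|H|=p^{3}$, i.e.\ the independence of $P,Q,R$ in group XV; this can be read off from the normal form $S^{a}R^{b}Q^{c}P^{d}$ of elements of XV (equivalently, from the commutator computations $[R,S]=Q$ and $[Q,S]=P$, which identify the derived subgroup of group XV with $\langle P,Q\rangle$, of order $p^{2}$). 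Everything else is routine bookkeeping with Corollaries~\ref{3}--\ref{4} and the definition of $A^{2}$; no new brace-theoretic computation is required.
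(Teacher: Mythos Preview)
Your proof is correct and follows essentially the same path as the paper's: both show that $P,Q,R\in A^{2}$, that the subgroup they generate under $\circ$ has order $p^{3}$ and hence equals $A^{2}$, and then deduce from the abelianness of $(A^{2},\circ)$ that the brace $A^{2}$ is commutative (the paper cites \cite{cjo} for this last implication). The only minor difference is that the paper obtains $|A^{2}|=p^{3}$ by invoking Proposition~\ref{7} (which uses the standing hypothesis that $A$ is not right nilpotent), whereas you get $|A^{2}|\le p^{3}$ directly from left nilpotency via $A^{2}\neq A$, making your argument slightly more self-contained.
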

	\begin{proof} 
		Suppose that $R\in A^{2}$ so  
		$P, Q, R\subseteq A^{2}$. Since the set $\{P^{i}\circ Q^{j}\circ R^{t}: 0<i,j,t \leq p\}$ has cardinality $p^{3}$ and is contained in $A^{2}$ it follows that $A^{2}=\{P^{i}\circ Q^{j}\circ R^{t}: 0<i,j,t\leq p\}$, by Proposition \ref{7}. Therefore, $(A^{2}, \circ)$ is commutative (since $P, Q, R$ commute with each other). It follows that $(A, +, *)$ is a commutative ring by \cite{cjo}.
	\end{proof}

	\begin{lemma}\label{12} 
		Let $(A, +, \circ)$ be a brace which is not right nilpotent with multiplicative group XV. If $R\in A^{2}$, then $Q*Q=0$.
	\end{lemma}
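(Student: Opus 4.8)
The plan is to obtain $Q*Q=0$ directly from the commutator identity in Lemma \ref{2}, applied to the defining relation of the group XV that presents $Q$ as a commutator of $R$ and $S$.

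First I would invoke Lemma \ref{11}: since $R\in A^{2}$, the operation $*$ makes $(A,+,*)$ a commutative, hence associative, ring, so $*$ is commutative and associative throughout $A$; I will also use that $A^{5}=0$. Recall that in the group XV one has $Q=R^{-1}\circ S^{-1}\circ R\circ S$. Substituting $a=R$, $b=S$ and $c=Q$ into Lemma \ref{2} then expresses $Q*Q$ as a signed sum of six iterated $*$-products in $R,S,Q$: two short ones, $R*(S*Q)$ and $S*(R*Q)$, and four longer ones obtained by prefixing one further $R$ or $S$.

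The next step is to show every one of these six terms vanishes, using Corollary \ref{3}, which gives $Q\in A^{2}$ and $Q*A^{i}\subseteq A^{i+2}$. Since $R\in A^{2}$ we have $R*S\in A^{2}*A\subseteq A^{3}$, so by associativity and commutativity $R*(S*Q)=(R*S)*Q=Q*(R*S)\in Q*A^{3}\subseteq A^{5}=0$. Likewise $R*Q=Q*R\in Q*A^{2}\subseteq A^{4}$, whence $S*(R*Q)\in A*A^{4}=A^{5}=0$. Each of the remaining four terms contains $R*(S*Q)$ or $S*(R*Q)$ as its innermost factor, so all four are zero, and therefore $Q*Q=0$.

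I do not anticipate a serious obstacle: the argument is essentially careful bookkeeping of the six terms produced by Lemma \ref{2}, combined with choosing the correct index in Corollary \ref{3}. The one point that genuinely uses the hypothesis $R\in A^{2}$, beyond "$R$ is small in the radical chain", is the passage to commutativity of $*$ via Lemma \ref{11}; this is what allows the products to be reshuffled so that $Q$ sits on the outside, which is exactly the form in which Corollary \ref{3} can be applied to collapse each term into $A^{5}=0$.
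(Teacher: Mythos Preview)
There is a genuine gap in your argument, and it stems from a misreading of Lemma~\ref{11}. That lemma (despite a typo in the last line of its proof) only asserts that $A^{2}$ is a commutative brace, hence a commutative associative ring; it does \emph{not} say that $(A,+,*)$ itself is a ring. Indeed it cannot be: if $*$ were commutative on all of $A$ then $(A,\circ)$ would be abelian, contradicting that it is the group~XV. In particular $S\notin A^{2}$, so you have neither associativity through $S$ nor the inclusion $A^{2}*A\subseteq A^{3}$ (in a left brace $A^{2}*A$ need not lie in $A^{3}=A*A^{2}$). Consequently both steps in your treatment of $R*(S*Q)$ --- the rewriting $R*(S*Q)=(R*S)*Q$ and the claim $R*S\in A^{3}$ --- are unjustified.

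Your handling of the other five terms from Lemma~\ref{2} is fine: $S*(R*Q)=0$ because $R*Q=Q*R\in Q*A^{2}\subseteq A^{4}$ (commutativity inside $A^{2}$ is legitimate here), and the four triple products then lie in $A*A^{4}=A^{5}=0$. So the whole argument reduces to showing $R*(S*Q)=0$, and for this one genuinely needs the finer structure of $A^{3}$: by Proposition~\ref{7} one has $S*Q\in A^{3}=\mathbb F_{p}P+A^{4}$, and then $R*P\in A^{2}*P=0$ by Corollary~\ref{4} while $R*A^{4}\subseteq A^{5}=0$. With this repair your route via Lemma~\ref{2} goes through. The paper's proof takes a different path --- it expands the relation $R\circ S=S\circ R\circ Q$ and then uses $(Q\circ R)*S=(R\circ Q)*S$ to identify $Q*(R*S)$ with $R*(Q*S)$ --- but it too ultimately appeals to $A^{3}=\mathbb F_{p}P+A^{4}$ and $P*A^{2}=0$, so the essential ingredients are the same.
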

	\begin{proof} 
		Notice that from the relation \[R\circ S=S\circ R\circ Q\] we get \[R*S=S*R+S*Q+R*Q+Q.\]
		therefore, 
		\begin{align}\label{E2}
			Q\ast\left(R*S\right)&=Q\ast\left(S*R\right)+Q\ast\left(S*Q\right)+Q\ast\left(R*Q\right)+Q\ast Q\nonumber \\
			Q\ast\left(R*S\right)&=Q\ast\left(S*R\right)+Q\ast Q \in A^{4} \ \text{since} \ Q \in A^{2}.
		\end{align}
		We have also the group relation $Q\circ R=R\circ Q$, therefore \[(Q\circ R)*S=(R\circ Q)*S,\]
		which using (\ref{BR}) implies that $ Q*(R*S)=R*(Q*S)$. Now \[ Q*S \in A^{3}= \mathbb F_{P}P+A^{4},\] so $ R*(Q*S) \in \mathbb F_{P}P\ast R=0 $. Thus substituting in (\ref{E2}) we find \[ 0=R*(Q*S)=Q\ast\left(R*S\right)=Q\ast\left(S*R\right)+Q\ast Q,\] 
		since we have assumed $ R \in A^{2} $, we have that $ Q\ast\left(S*R\right)=0 $, so $ Q\ast Q=0 $. 
	\end{proof}


	\begin{proposition}\label{R} 
		Let $(A, +, \circ)$ be an $\mathbb F_{p}$-brace which is not right nilpotent with multiplicative group XV. If $R\in A^{2}$, then $Q\in A^{3}$.
	\end{proposition}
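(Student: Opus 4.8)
Here is the plan I would follow.

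\medskip
\noindent\textit{Set-up.} I would argue by contradiction, assuming $Q\notin A^{3}$. Since $A$ is not right nilpotent, Proposition~\ref{7} gives $A^{4}\neq 0$, $P\notin A^{4}$ and $|A^{3}|=p^{2}$, and Corollary~\ref{5} then makes each quotient in $A\supset A^{2}\supset A^{3}\supset A^{4}\supset 0$ one-dimensional over $\mathbb F_{p}$. Under the assumption $Q\notin A^{3}$ this means $A^{2}=\mathbb F_{p}Q+A^{3}$, $A^{3}=\mathbb F_{p}P+A^{4}$, and $A=\mathbb F_{p}S+A^{2}$. The aim is to exhibit an element that is forced to be both zero and non-zero.

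\medskip
\noindent\textit{Reduction via the relation $R\circ S=S\circ R\circ Q$.} Because $R\in A^{2}$, I would first collect the available vanishing statements: by Lemma~\ref{11} the brace $A^{2}$ is commutative, so $R*Q=Q*R\in Q*A^{2}\subseteq A^{4}$ by Corollary~\ref{3}; by Corollary~\ref{4}, $R*P=P*R\in P*A^{2}=0$ and $R*A^{4}\subseteq A*A^{4}=0$, hence $R*A^{3}=0$ and $R*A^{2}\subseteq A^{4}$; moreover $S*A^{2}\subseteq A^{3}$, $S*A^{3}\subseteq A^{4}$, $S*A^{4}=0$, and $Q*Q=0$ (Lemma~\ref{12}), giving $Q^{-1}=-Q$. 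Writing $R\circ S=R+S+R*S$ and $S\circ R\circ Q=S+R+Q+R*Q+S*R+S*Q+S*(R*Q)$ one gets
\[
Q=R*S-R*Q-S*R-S*Q-S*(R*Q),
\]
and all terms on the right other than $R*S$ lie in $A^{3}$ ($R*Q\in A^{4}$, $S*R,S*Q\in A*A^{2}=A^{3}$, $S*(R*Q)\in A*A^{4}=0$). Hence $R*S\equiv Q\pmod{A^{3}}$, so $R*S\notin A^{3}$, and it suffices to derive a contradiction from $R*S\notin A^{3}$.

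\medskip
\noindent\textit{The $\lambda$-map computation.} Next I would pass to the homomorphism $\lambda$ and use $\lambda_{R\circ S}=\lambda_{S\circ R\circ Q}$, i.e. $\lambda_{R}\lambda_{S}=\lambda_{S}\lambda_{R}\lambda_{Q}$. Evaluating both sides on an arbitrary $x\in A^{2}$ and expanding with the containments above ($R*x\in A^{4}$, $R*A^{3}=0$, $Q*x\in A^{4}$, $S*A^{3}=0$ applied to the $A^{4}$-terms, etc.) gives on one side $x+R*x+S*x$ and on the other $x+R*x+Q*x+S*x$; comparing yields $Q*x=0$. Thus $Q*A^{2}=0$, and then commutativity of $A^{2}$ forces $R*Q=Q*R=0$, so $R*A^{2}=0$. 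If $R\in\mathbb F_{p}P+\mathbb F_{p}Q$ we would get $R*S\in\mathbb F_{p}(P*S)+\mathbb F_{p}(Q*S)\subseteq A^{3}$, contradicting the previous paragraph; so $R\notin\mathbb F_{p}P+\mathbb F_{p}Q$, and since $P,Q,R$ all annihilate $A^{2}$ this shows $\mathrm{Ann}_{A^{2}}(A^{2})=A^{2}$, i.e. $A^{2}*A^{2}=0$.

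\medskip
\noindent\textit{Finish and expected obstacle.} At this point $A^{2}$ is a three-dimensional zero-multiplication ideal, $A^{3}=A*A^{2}=\mathbb F_{p}(S*A^{2})=\mathbb F_{p}(S*Q)+A^{4}$ with $S*Q\notin A^{4}$, and the relation $Q\circ S=S\circ Q\circ P$ pins $S*Q$ and $Q*S$ down modulo $A^{4}$. The hard part — the one I expect to be the real obstacle — is to show that a brace with this structure cannot actually have multiplicative group $\mathrm{XV}$. Concretely one must produce the contradicting element: for instance, $A^{2}*A^{2}=0$ forces $a*Q=Q*a=0$ for every $a\in A^{4}\subseteq A^{2}$, whereas Proposition~\ref{aQ} describes $a*Q$ as a power $P^{\alpha}$, so one has to argue that the group-theoretic constraints (e.g. $P\notin A^{4}$, $Q$ non-central in $\mathrm{XV}$, all elements of order $p$) force $a*Q\neq0$ for some $a\in A^{4}$; alternatively, one computes the commutators $[R,S^{\circ k}]_{\circ}$ directly from the $\lambda$-data and checks they are incompatible with the defining relations of $\mathrm{XV}$. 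Everything preceding this last incompatibility check is routine bookkeeping with the brace identities, Lemma~\ref{2} and Corollaries~\ref{3}–\ref{4}; isolating the genuinely contradictory computation is where the effort lies.
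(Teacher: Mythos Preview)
Your set-up and intermediate steps are sound: under $Q\notin A^{3}$ you correctly reach $A^{2}=\mathbb F_{p}Q+\mathbb F_{p}P+A^{4}$ and $A^{2}*A^{2}=0$. The $\lambda$-identity argument for $Q*A^{2}=0$ is valid but redundant --- once $A^{2}=\mathbb F_{p}Q+\mathbb F_{p}P+A^{4}$ is known, right distributivity gives $Q*A^{2}\subseteq\mathbb F_{p}(Q*Q)+\mathbb F_{p}(Q*P)+Q*A^{4}=0$ immediately from Lemma~\ref{12} and Corollary~\ref{4}, and $A^{2}*A^{2}=0$ then follows the same way via commutativity of $A^{2}$. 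Your case split on whether $R\in\mathbb F_{p}P+\mathbb F_{p}Q$ is therefore unnecessary.

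The genuine gap is the one you flag yourself: you stop at $A^{2}*A^{2}=0$ without producing the contradiction, and neither of your proposed finishes works. Proposition~\ref{aQ} only says $a*Q=P^{\alpha}$ for \emph{some} $\alpha$, and $\alpha\equiv 0$ is permitted, so $a*Q=0$ (which is exactly what $A^{2}*A^{2}=0$ gives for $a\in A^{4}$) is consistent with it; no contradiction arises there. The commutator suggestion is too vague to lead anywhere. What is actually missing is the computation the paper carries out: expand $(R\circ S)*S=(S\circ R\circ Q)*S$ and substitute $R*S=S*R+S*Q+R*Q+Q$ to obtain $S*Q+Q*S\in A^{4}$; combine this with $Q*S-S*Q=S*P+P$ (from $Q\circ S=S\circ Q\circ P$) to get $Q*S\in\tfrac{1}{2}P+A^{4}$; then expand $(Q\circ S)*S=(S\circ Q\circ P)*S$ to obtain $0=Q*(S*S)=S*(Q*S)+P*S=\tfrac{3}{2}\,S*P$, whence $S*P=0$. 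Since $S\notin A^{2}$ (otherwise all four generators would lie in the proper ideal $A^{2}$), one has $A=\mathbb F_{p}S+A^{2}$ and hence $P*A=A*P=0$; then $\mathbb F_{p}P$ is an ideal with trivial action, $A/\mathbb F_{p}P$ is right nilpotent, and so is $A$ --- the contradiction. You mention that $Q\circ S=S\circ Q\circ P$ ``pins down'' $S*Q$ modulo $A^{4}$, but by itself that relation only gives the \emph{difference} $Q*S-S*Q$; the relation coming from $(R\circ S)*S$ is needed for the \emph{sum}, and the one from $(Q\circ S)*S$ to kill $S*P$.
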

	\begin{proof} 
		Suppose on the contrary that $Q\notin A^{3}$. Recall that, by Corollaries \ref{3} and \ref{4} and Proposition \ref{7}, $Q\in A^{2}$, $P\in A^{3}, P\notin A^{4}$. Hence,  
		\[A^{2}=\mathbb F_{p}Q+\mathbb F_{p}P+A^{4}.\]
		In group XV we have the relation
		\[Q\circ S=S\circ Q\circ P,\]
		hence $Q*S=S*Q+S*P+P$, by Corollary \ref{4}. We also have the relation $R\circ S=S\circ R\circ Q,$
		so \[R*S=S*R+S*Q+ R*Q+Q\] (notice that $S*(R*Q)=S*(Q*R)\in S*A^{4}\subseteq A^{5}=0$). Also, we have the relation    
		\[(R\circ S)*S=(S\circ R\circ Q)*S,\]
		hence \[R*(S*S)=S*(R*S)+S*(Q*S)+Q*S.\]
		
		Observe that $R*(S*S)\subseteq A^{2}*A^{2}$. Recall that $A^{2}$ is a commutative ring by Lemma \ref{11} and $A^{2}=\mathbb F_{p}Q+\mathbb F_{p}P+A^{4}$, hence \[A^{2}*A^{2}=A^{2}*(\mathbb F_{p}Q+\mathbb F_{p}P+A^{4})\subseteq {\mathbb F}_{p}Q*Q+\mathbb F_{p}A^{2}*P+ A^{2}*A^{4}=0\] (since $Q*Q=0$ by Lemma \ref{12} and $A^{2}$ is commutative). 
		Next, by the above we have \[ S*(R*S)=S*(S*R+S*Q+R*Q+Q).\]
		Therefore, \[S*Q+Q*S=-(S*(S*R)+S*(S*Q)+S*(Q*S))\subseteq A^{4}.\] 
		Recall from the beginning of this proof that $Q*S-S*Q=S*P+P$, so \[S*Q\in {\frac {-P}2}+A^{4}.\] 
		Therefore, $Q*S\in {\frac P2}+A^{4}$. 
		
		Recall the relation \[(Q\circ S)*S=(S\circ Q\circ P)*S.\]
		Hence $Q*(S*S)=S*(Q*S)+P*S$, and by the above \[Q*(S*S)\subseteq Q*A^{2}\subseteq A^{2}*A^{2}=0,\] and $S*(Q*S)\in S*({\frac P2}+A^{4})\in {\frac {S*P}2}$. Therefore, ${\frac 32}S*P=0$. Hence $P*A=A*P=0$ so $\mathbb F_{p}P$ is an ideal in $A$. Consequently, $A/\mathbb F_{p}P$ is right nilpotent, so $A$ is right nilpotent, a contradiction. 
	\end{proof}
	
	\subsection{The Case when $Q\in A^{3}$}
	\begin{theorem}\label{Q} Let $p>3$ be a prime number. Let $A$ be an $\mathbb F_{p}$-brace whose multiplicative group $(A, \circ )$ is the group XV. Suppose that $Q\in A^{3}$. Then $A$ is a right nilpotent brace. 
	\end{theorem}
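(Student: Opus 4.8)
The plan is to argue by contradiction: suppose $A$ is not right nilpotent. The basic device --- already used in the proofs of Propositions \ref{7}, \ref{aQ} and \ref{R} --- is that if $I\le A$ is a nonzero $\mathbb F_{p}$-subspace with $A*I=I*A=0$, then $I$ is an ideal, $A/I$ has cardinality $\leq p^{3}$ and is therefore right nilpotent by \cite{DB}, so $A^{(n)}\subseteq I$ for some $n$ and then $A^{(n+1)}=A^{(n)}*A\subseteq I*A=0$ --- contradicting the assumption. Concretely it suffices to show $A^{4}=0$, which already contradicts Proposition \ref{7}. From Proposition \ref{7} and Corollary \ref{5} we record: $A^{4}\neq0$, $P\notin A^{4}$, $A^{3}=\mathbb F_{p}P+A^{4}$, each layer $A^{i}/A^{i+1}$ is one-dimensional, and $A^{5}=0$.

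First I dispose of the case $R\in A^{2}$. Then Lemma \ref{11} applies, so the subbrace $A^{2}$ is commutative, i.e.\ a finite radical ring; such a ring is nilpotent under $*$, and, as in \cite{cjo}, this forces $A$ itself to be right nilpotent --- a contradiction. So from now on $R\notin A^{2}$.

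Since $A/A^{2}$ is one-dimensional and $R\notin A^{2}$, for a suitable exponent $i$ the element $S\circ R^{i}$ lies in $A^{2}$; as in the proof of Proposition \ref{6}, replacing the generator $S$ by $\bar S=S\circ R^{i}$ is an automorphism of $(A,\circ)$ which preserves the defining relations of group XV, so we may assume $S\in A^{2}$. Since $P$ is central, $P*A^{2}=0$ (Corollary \ref{4}), and $A=\mathbb F_{p}R+A^{2}$, we get
\[ A^{4}=A*A^{3}=A*(\mathbb F_{p}P+A^{4})=A*P=\mathbb F_{p}(P*R), \]
using $A*A^{4}=A^{5}=0$; hence it suffices to prove $P*R=0$. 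Recalling $P=Q^{-1}\circ S^{-1}\circ Q\circ S$ in group XV, I apply Lemma \ref{2} with $a=Q$, $b=S$: since $Q*A\subseteq A^{3}$ (Corollary \ref{3}) and $A^{5}=0$, all but the first two of the six terms vanish, giving $P*c=Q*(S*c)-S*(Q*c)$ for every $c\in A$. Taking $c=R$ and using $Q*R=R*Q\in R*A^{3}\subseteq A^{4}$ (from $Q\circ R=R\circ Q$ and $Q\in A^{3}$), the second term lies in $S*A^{4}=0$, so $P*R=Q*(S*R)$. Finally, expanding the group relation $R\circ S=S\circ R\circ Q$ via the brace axioms gives
\[ S*R=R*S-Q-S*Q-R*Q-S*(R*Q), \]
where $R*S\in R*A^{2}\subseteq A^{3}$ (because $S\in A^{2}$), $Q\in A^{3}$, the terms $S*Q$ and $R*Q$ lie in $A*A^{3}=A^{4}$, and $S*(R*Q)\in S*A^{4}=0$; hence $S*R\in A^{3}$. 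Therefore $P*R=Q*(S*R)\in Q*A^{3}\subseteq A^{5}=0$ by Corollary \ref{3}, so $A^{4}=\mathbb F_{p}(P*R)=0$, the required contradiction.

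The step I expect to be the main obstacle is the case $R\in A^{2}$: Lemma \ref{11} delivers only commutativity of the subbrace $A^{2}$, and converting this into right nilpotency of all of $A$ needs the ring-theoretic input of \cite{cjo} (a finite radical ring is nilpotent) together with a verification that nilpotency of the ideal $A^{2}$ under $*$, combined with $A=\mathbb F_{p}S+A^{2}$, bounds the right chain $A^{(n)}$. Once $R\notin A^{2}$ and $S$ has been normalised into $A^{2}$, the argument is short; the only delicate bookkeeping is that every summand of $S*R=R*S-Q-S*Q-R*Q-S*(R*Q)$ already lies in $A^{3}$, which uses exactly the hypothesis $Q\in A^{3}$.
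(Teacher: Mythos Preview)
Your argument for the case $R\notin A^{2}$ is correct: after normalising $S$ into $A^{2}$ you show $A^{4}=\mathbb F_{p}(P*R)$ and then force $P*R=0$ via Lemma~\ref{2} and the relation $R\circ S=S\circ R\circ Q$, using $Q\in A^{3}$ to push $S*R$ into $A^{3}$. (The paper's Case~1 reaches the same contradiction more quickly: with $Z=S\circ R^{j}\in A^{2}$, the relation $Q\circ Z=Z\circ Q\circ P$ unpacks to $Q*Z=Z*Q+Z*P+P$; since $Q*Z\in Q*A^{2}\subseteq A^{4}$ and $Z*Q,\,Z*P\in A*A^{3}\subseteq A^{4}$, one reads off $P\in A^{4}$ immediately.)

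The case $R\in A^{2}$, however, is a genuine gap, and you flag it yourself. Lemma~\ref{11} gives only that $(A^{2},+,*)$ is a commutative (hence nilpotent) ring. This controls iterated $*$-products \emph{inside} $A^{2}$, but the right chain $A^{(n)}=A^{(n-1)}*A$ multiplies by all of $A$ on the right, and $*$ is not additive in its first variable, so you cannot decompose $A^{(n)}$ along $A=\mathbb F_{p}S+A^{2}$ and feed it into the ring nilpotency of $A^{2}$. There is no statement in \cite{cjo} of the form ``$A^{2}$ a nilpotent ring and $|A/A^{2}|=p$ imply $A$ right nilpotent''; what \cite{cjo} provides is the equivalence between two-sided braces and radical rings, which you have already used to get nilpotency of $A^{2}$ itself. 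The paper's Case~2 does real work here: from $Q\circ S=S\circ Q\circ P$ and $(Q\circ S)*S=(S\circ Q\circ P)*S$ it extracts $Q*(S*S)=2\,S*P$; if $S*P=0$ one is done, while $S*P\neq 0$ forces $Q*R\neq 0$, and then a contradiction is obtained by computing $E*R$ two ways for a central element $E=Q^{i}\circ P^{j}\in A^{4}$. Your appeal to commutativity of $A^{2}$ does not replace this computation.
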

	\begin{proof}  
		Notice that the set of all products $P^{i}\circ Q^{j}$ has cardinality $p^{2}$, and we know from Proposition \ref{7} that $A^{3}=A* (A* A)$ has cardinality $p^{2}$. Moreover, $P^{i}\circ Q^{j}\in A^{3}$ (where $P^{j}=P\circ \cdots \circ P$, where $P$ appears $j$ times in this product). So  $A^{3}=\{P^{i}\circ Q^{j}: i,j\leq p\}$. Since $A^{4}\subseteq A^{3}$, it follows that there are integers $i,j$ and $0\neq E\in A^{4}$ such that 
		\[E=P^{i}\circ Q^{j}.\]
		
		We will now consider two cases.
		
		{\bf Case 1.} Either $S\in A^{2}$ or both $S, R\notin A^{2}$. Because $A/A^{2}$ has dimension $1$. Then there is $0<j\leq p$ such that 
		\[S\circ R^{j}\in A^{2}.\] Denote $Z=S\circ R^{j}$. 
		
		By the group relations we  have \[Q\circ Z=Z\circ Q\circ P.\] Recall that $Q*P=0$, so \[Q*Z=Z*Q+Z*P+ P.\]
		Notice $Q*Z\in Q* A^{2}\subseteq A^{4}$ and $Z*Q\subseteq Z*A^{3}\subseteq A^{4}$, $Z*P\subseteq Z*A^{3}\subseteq A^{4}$, so \[P\in A^{4}.\]
		But $P\in A^{4}$ implies that $A$ is nilpotent, as shown previously. 
		
		{\bf Case 2.} Suppose $S\in A, S\notin A^{2}$ and $R\in A^{2}$. 
		
		Observe that the group relation \[Q\circ S=S\circ Q\circ P\]
		implies \[Q*S=S*Q+S*P+P,\] since $Q*P=0$ by Corollary \ref{4}.
		Therefore, \[S*(Q*S)=S*(S*Q)+S*(S*P)+S*P=S*P\] since \[S*(S*Q)\subseteq S*(S*A^{3})\subseteq A^{5}=0.\] 
		Observe now that we also have a relation
		\[(Q\circ S)*S=(S\circ Q\circ P)*S,\]
		which implies $Q*(S*S)=S*(Q*S)+P*S.$ By the above $Q*(S*S)=S*(Q*S)+P*S=2S*P$ (since $P*S=S*P$). Notice that if $S*P=0$ then  $P*A=A*P=0$ and similarly as in previous section it implies that $A$ is right nilpotent. If $S*P\neq 0$, then $Q*(S*S)\neq 0$. Notice that $A^{2}=\mathbb F_{p}R+\mathbb F_{p}Q+\mathbb F_{p}P +A^{4}$. Therefore, \[Q*(S*S)\subseteq Q*A^{2}=\mathbb F_{p}Q*R+\mathbb F_{p}Q*Q +\mathbb F_{p} Q*P=\mathbb F_{p}Q*R,\] since $Q\in A^{3}$. It follows that $Q*R\neq 0$.
		
		Note that because $R\circ Q=Q\circ R$ we get $Q*R=R*Q\neq 0$. Observe that by the above there are $i,j$ such that  $Q^{i}\circ P^{j}=E\in A^{4}$, where $i$ is a natural number not divisible by $ p $, since $P\notin A^{4}$, as $ P\in A^{4} $ would imply that $ A $ is a right nilpotent brace (for example by the first Lemma in Section \ref{S8}). Observe that it follows that $E*R=(Q^{i}\circ P^{j})*R=i(Q*R)\neq 0$ (since $P*R\subseteq P*A^{2}=0$). On the other hand, since $R,P,Q\in A^{2}$ and $A^{2}$ has cardinality $p^{3}$ then $A^{2}$ is commutative. Therefore, $E*A^{2}=A^{2}*E\subseteq A^{2}*A^{4}\subseteq A^{5}=0$, which is a contradiction. Notice that $ R \in A^{2} $ so $ E*A^{2}=0 $ gives $ E*R=0 $. However, we have shown that $ E*R $ is not zero above. This gives a contradiction.
	\end{proof}
	
%

	\subsection{The Case when $S\in A^{4}$}
	\begin{proposition}\label{p} Let $p>3$ be a prime number. Let $A$ be an $\mathbb F_{p}$-brace whose multiplicative group $(A, \circ )$ is the group XV. Then $(A, \circ)$ is generated by elements $P, Q, R, S$ which satisfy relations from group XV. Moreover, the following relations hold for $a\in \{P, Q, R, S\}$.
		\begin{enumerate}
			\item \[Q\circ S=S\circ Q\circ  P,\ (Q\circ S)*a=(S\circ Q\circ  P)*a\]
			\item \[Q\circ R=R\circ Q,\ (Q\circ R)*a=(R\circ Q)*a\]
			\item \[P\circ S=S\circ P,\ P\circ Q=Q\circ P,\ P\circ R=R\circ P,\]
			\[(P\circ S)*a=(S\circ P)*a=0,\ ( P\circ Q)*a=(Q\circ P)*a=0,\ (P\circ R)*a=(R\circ P)*a\]
			\item \[ R\circ S=S\circ R\circ Q,\ (R\circ S)*a=(S\circ R\circ Q)*a\]
			\item \[P^{p}=Q^{p}=R^{p}=S^{p}=0,\ (P^{p})*a=(Q^{p})*a=(R^{p})*a=(S^{p})*a=0\]
		\end{enumerate}
	\end{proposition}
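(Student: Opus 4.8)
The statement splits into a purely formal part and a computational part, and the plan is to dispatch the formal part first. Since $(A,\circ)$ is assumed isomorphic to the group XV, I would fix such an isomorphism and let $P,Q,R,S\in A$ be the images of the standard generators. Then $(A,\circ)$ is generated by $P,Q,R,S$ and these elements satisfy every defining relation of group XV; recalling that the identity of $(A,\circ)$ coincides with $0\in(A,+)$, this is exactly the first sentence of the proposition together with all the displayed group-relation lines in items 1--5, in particular $P^{p}=Q^{p}=R^{p}=S^{p}=0$.

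For the $*$-counterparts of these relations --- the equalities $(\,\cdot\,)*a=(\,\cdot\,)*a$ in items 1, 2, 4, 5 and the left-hand equalities in item 3 --- the only ingredient is the elementary remark that, in any brace, $g=h$ forces $g*a=h*a$ for every $a$, since
\[
g*a=g\circ a-g-a=h\circ a-h-a=h*a .
\]
As each line of group XV relations is an equality of elements of $A$, substituting $a\in\{P,Q,R,S\}$ produces the paired $*$-identities; applied to $P^{p}=\cdots=S^{p}=0$ this gives $(P^{p})*a=0*a=0$, and likewise for $Q,R,S$, which covers item 5.

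The genuine content is the vanishing assertions in item 3: $(P\circ S)*a=0$ and $(P\circ Q)*a=0$ for $a\in\{P,Q,R,S\}$. Here I would lean on Corollaries~\ref{3} and~\ref{4}: $P$ is central in $(A,\circ)$, so $P*x=x*P$ for all $x$; $P\in A^{3}$, whence $A*P=P*A\subseteq A^{4}$, $P*P=P*Q=Q*P=0$ and $P*A^{2}=A^{2}*P=0$; and $Q\in A^{2}$. Since $P*Q=0$ we get $P\circ Q=Q\circ P=P+Q$, and since $P*S=S*P\in A^{4}$ the term $(P*S)*a$ lies in $A^{4}*A\subseteq A^{4}$ (each $A^{i}$ being an ideal), so $(P\circ S)*a$ reduces to $(P+S)*a$ up to such a term. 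Expanding $(P+Q)*a=P*a+Q*a$ and $(P+S)*a=P*a+S*a$ and running through $a\in\{P,Q,R,S\}$, each summand is either annihilated directly by Corollary~\ref{4} (any term of shape $P*P$, $P*Q$, $Q*P$, or a product involving $A^{2}$) or is pushed into $A^{4}$ and then disposed of using the strict chain $A\supsetneq A^{2}\supsetneq A^{3}\supsetneq A^{4}\supseteq A^{5}=0$ (so $|A^{4}|\le p$) together with the group relations $R\circ S=S\circ R\circ Q$ and $Q\circ S=S\circ Q\circ P$, which pin down the surviving off-diagonal products $Q*Q$, $Q*R$, $R*Q$. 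I expect this last step --- deciding which of the products $g*h$ with $g,h\in\{P,Q,R,S\}$ are forced to vanish and organising the case analysis over $a$ --- to be the only real work; everything else is formal, and this same computation is precisely what supplies the defining equations for the structure constants $\alpha_{i,j,k}$ exploited in the following subsection.
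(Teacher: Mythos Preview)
Your first two paragraphs are exactly right and are essentially the paper's entire proof: a single sentence noting that every listed relation follows from $(A,\circ)$ being group XV, together with the fact that the identity of $(A,\circ)$ is $0$. The paper makes no attempt whatsoever to justify the ``$=0$'' clauses appearing in item~3.

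The reason is that those clauses are misprints: the assertions $(P\circ S)*a=0$ and $(P\circ Q)*a=0$ are \emph{false} in the very braces the paper constructs. From the Multiplicative Table in Section~\ref{allbraces} one has $P*Q=0$, hence $P\circ Q=P+Q$, and $P*(Q*Q)\in P*A^{4}=0$, so
\[
(P\circ Q)*Q \;=\; P*Q+Q*Q+P*(Q*Q)\;=\;0+(-yS)+0\;=\;-yS\neq 0,
\]
and likewise $(P\circ S)*R=P*R+S*R+P*(S*R)=yS+(-Q+P-2^{-1}yS)+0\neq 0$. The intended statement is simply $(P\circ S)*a=(S\circ P)*a$ and $(P\circ Q)*a=(Q\circ P)*a$, parallel to items 1, 2, 4; only in item~5 does ``$=0$'' genuinely follow, from $P^{p}=0$. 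Your proposed case analysis in the third paragraph is therefore chasing a statement that cannot be proved, and would break at exactly the computations above. Note too that the proposition as written does not assume $A$ is not right nilpotent, so the strict chain $A\supsetneq A^{2}\supsetneq A^{3}\supsetneq A^{4}$ you rely on (which comes from Proposition~\ref{7}) is not available in this generality anyway.
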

	\begin{proof} 
		It follows from the fact that group XV is the multiplicative group $(A, \circ )$ of our brace. Recall that $0$ is the identity element of the additive group $(A, +)$ and also the identity element of the multiplicative group $(A, \circ )$ (these identity elements coincide in every brace).  
	\end{proof}

	\begin{proposition}\label{listings} 
		Let $p>3$ be a prime number. Let $(A, +, \circ)$ be an $\mathbb F_{p}$-brace which is not right nilpotent with multiplicative group XV. Then $(A, \circ)$ is generated by elements $P, Q, R, S$ which satisfy relations from group XV. In addition, we have the following.
		\begin{enumerate}
			\item $S\in A^{4},$ $P\in  A^{3}, P\notin A^{4}$, $Q\in A^{2}$. Moreover, $Q*A^{i}\subseteq A^{i+2}$, $P*A^{i}\subseteq A^{i+3}$. 
			\item $Q\notin A^{3}$,  $R\in A$, $R\notin A^{2}$. 
			\item $Q*P=0$, $P*P=0$, $S*P=0$, $R*P=yS$ for some $y\in \mathbb F_{p}$ with $ y\neq 0 $.
			\item $Q*S=0$, $P*S=0$, $S*S=0$, $R*S=0$.  
			\item $Q*Q=\alpha S$, $P*Q=0$, $S*Q=\gamma P$, $R*Q=z_{1}P+zS$ for some $\alpha, \gamma, z, z_{1} \in \mathbb F_{p}$.
			\item $Q*R=z_{1}P+zS$, $P*R=yS$ for some $z, z_{1}, y \in \mathbb F_{p}$ with $y\neq 0$.
			\item $S*R=j'Q+i'P+k'S$, $R*R=jQ+iP+kS$ for some $j, j',i,  i', k, k'\in \mathbb F_{p}$.
			\item $\gamma =-1.$
			\item $\alpha =-y $.
			\item $j'=-1$. 
			\item $z_{1}=0$ and $j=0$.
			\item $2z=y$, $i'=1$, $k'=-z$.
		\end{enumerate}
	\end{proposition}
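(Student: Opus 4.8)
The plan is to first pin down the ambient linear structure, then read off the rough shape of every product, and finally turn the remaining defining relations of group XV into scalar equations. Since $A$ is not right nilpotent, Theorem \ref{Q} forces $Q\notin A^{3}$ and the contrapositive of Proposition \ref{R} forces $R\notin A^{2}$, so Proposition \ref{6} applies, and after replacing $S$ by the element $\bar S\in A^{4}$ it provides and relabelling, we may assume $S\in A^{4}$. Together with Corollaries \ref{3} and \ref{4} and Proposition \ref{7} this gives (1) and (2) and shows the chain $A\supsetneq A^{2}\supsetneq A^{3}\supsetneq A^{4}\supsetneq 0$ has one-dimensional factors; hence $\{R,Q,P,S\}$ is an $\mathbb F_{p}$-basis of $A$ with $A^{2}=\mathbb F_{p}Q+\mathbb F_{p}P+\mathbb F_{p}S$, $A^{3}=\mathbb F_{p}P+\mathbb F_{p}S$ and $A^{4}=\mathbb F_{p}S$. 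All later computations are carried out in this basis, comparing coefficients.

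Next I would record the products in (3)--(7). The vanishing relations in (3) and (4) come from $S\in A^{4}$ together with $Q*A^{i}\subseteq A^{i+2}$, $P*A^{i}\subseteq A^{i+3}$ (Corollaries \ref{3}, \ref{4}) and $A*A^{4}=A^{5}=0$; centrality of $P$ gives $S*P=P*S$ and $R*P=P*R$, and the latter lies in $P*A\subseteq A^{4}=\mathbb F_{p}S$, i.e. $R*P=yS$. That $y\neq 0$ follows by the usual argument: $y=0$ would give $P*A=A*P=0$, so $\mathbb F_{p}P$ is an ideal with $A/\mathbb F_{p}P$ of order $p^{3}$, hence right nilpotent, hence $A$ right nilpotent. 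The products $Q*Q,\,S*Q,\,R*Q,\,Q*R=R*Q,\,R*R$ are forced into the stated subspaces by the radical chain ($Q*Q\in Q*A^{2}\subseteq A^{4}$, $R*Q\in A*A^{2}=A^{3}$, $R*R\in A*A=A^{2}$, while $S*Q$ is a power of $P$ by Proposition \ref{aQ}); for $S*R$ one expands $R\circ S=S\circ R\circ Q$ using $(a\circ b)*c=a*c+b*c+a*(b*c)$ and $R*S=0$ to get $S*R=-Q-S*Q-R*Q-S*(R*Q)$, which lies in $A^{2}$ and has $Q$-coefficient $-1$; this simultaneously gives (10).

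The remaining identities are obtained by expanding the other defining relations of group XV together with their $*$-translates from Proposition \ref{p}. Expanding $Q\circ S=S\circ Q\circ P$ and collecting the coefficient of $P$ gives $\gamma+1=0$, which is (8). Applying Lemma \ref{2} with $a=R$, $b=S$, $c=Q$, so that the left-hand side is $Q*Q$ because $Q=R^{-1}\circ S^{-1}\circ R\circ S$ in group XV, and substituting the products already known collapses the right-hand side to $-yS$, giving (9). With $\gamma=-1$ in hand, the expansion of $R\circ S=S\circ R\circ Q$ from the previous paragraph already shows $i'=1-z_{1}$ and $k'=-z$; comparing the coefficient of $S$ in the $*$-translate $(R\circ S)*R=(S\circ R\circ Q)*R$ then yields $y(1-2z_{1})=2z$, and comparing the coefficient of $P$ there yields $j=2z_{1}$.

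The main obstacle — and the only place the hypothesis $p>3$ is genuinely used, since $P^{p}=Q^{p}=R^{p}=S^{p}=0$ hold automatically (the relevant binomial coefficients vanish mod $p$) — is (11), i.e. $z_{1}=0$ and $j=0$. One relation is not enough: I have just noted $j=2z_{1}$, while the $*$-translate $(Q\circ R)*R=(R\circ Q)*R$, which reduces to $Q*(R*R)=R*(Q*R)$, gives (using $Q*Q=-yS$ from (9)) $-jy=z_{1}y$, i.e. $j=-z_{1}$. Subtracting the two yields $3z_{1}=0$, so $z_{1}=0$ and $j=0$ because $p>3$. Feeding $z_{1}=0$, $j=0$ back into the equations of the third paragraph gives $i'=1$, $k'=-z$ and $2z=y$, completing (12). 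Beyond this step everything is routine but lengthy bookkeeping of coefficients across the six or so relations, and I expect keeping that bookkeeping consistent to be the only real difficulty.
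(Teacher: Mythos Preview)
Your proposal is correct and follows essentially the same route as the paper: first fix the basis via Proposition~\ref{6}, Theorem~\ref{Q}, Proposition~\ref{R}, Corollaries~\ref{3}--\ref{4} and Proposition~\ref{7}, then read off the coarse shape of each product from the chain $A\supsetneq A^{2}\supsetneq A^{3}\supsetneq A^{4}$, and finally expand the group-XV relations and their $*$-translates to pin down the scalars, combining $j=-z_{1}$ and $j=2z_{1}$ to get $z_{1}=j=0$ via $p>3$. The only cosmetic differences are that you extract $j'=-1$ directly from $R\circ S=S\circ R\circ Q$ (the paper uses $(S\circ Q\circ P)*R=(Q\circ S)*R$ for this and reserves the former for $i',k'$) and that you obtain $\alpha=-y$ via Lemma~\ref{2} rather than via $(R\circ S)*Q=(S\circ R\circ Q)*Q$; both are equivalent unpackings of the same identities.
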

	\begin{proof}
		\begin{enumerate}
			\item It follows from  Corollary \ref{3}, Proposition \ref{7}, Proposition \ref{6}.
			\item It follows from Proposition \ref{R} and  Theorem \ref{Q}.
			\item It follows from Corollary \ref{3} that $Q*P, P*P, S*P\in A^{5}=0$ (since $S*P=P*S$ as $S\circ P=P\circ S$). Moreover,  $R*P\in A^{4}=\mathbb F_{p}S$, which gives the conclusion.
			\item It follows from the fact that $S\in A^{4}$ so $A*S=0$.  
			\item By Corollary \ref{3} we have that $P*Q\in A^{5}=0$, $Q*Q\in A^{4}=\mathbb F_{p}S$. Similarly, since $Q\in A^{2}$, then $R*Q\in A^{3}=\mathbb F_{p}P+\mathbb F_{p}S$, by item 1 above. The fact that $S*Q=\gamma P$ follows from Proposition \ref{aQ}. 
			\item Since $P$ is central in the group XV it follows that $P*R=R*P=yS$ by item 3 above.  Similarly $Q*R=R*Q$ since $Q\circ R=R\circ Q$, and the result follows from item 5 above.
			\item Notice that $S*R, R*R\in A^{2}=\mathbb F_{p}Q+\mathbb F_{p}P+\mathbb F_{p}S$ by item 1.
			\item Consider relation
			\[ S\circ Q\circ P=Q\circ S.\]
			It can be rewritten as $S*Q+P+S*P=Q*S=0$ since $Q*S\subseteq A*A^{4}\subseteq A^{5}=0$. This gives $\gamma =-1$.
			\item Consider relation \[(R\circ S)*Q=(S\circ R\circ Q)*Q.\]
			We get $Q*Q+S*(R*Q)=R*(S*Q)$, hence $\alpha =-y$.
			\item Consider the relation $ (S\circ Q\circ P)*R=(Q\circ S)*R$ which is an implication of one of the group relations in group XV. We can rewrite it as \[S*(Q*R)+P*R=Q*(S*R).\]
			This implies $-y=-\alpha j'$. We know that $y\neq 0$ since $A$ is not right nilpotent, and $\alpha =-y$, hence $j'=-1$.
			\item We will first show that  $j=-z_{1}$. Consider the group relation $Q\circ R=R\circ Q$, it implies $(Q\circ R)*R=(R\circ Q)*R$. This implies $Q*(R*R)=R*(Q*R)$, hence $j\alpha =z_{1}y$. Since $y\neq 0$ and $y=-\alpha $ we get $z_{1}=-j$.
			
			We will now to show that $z_{1}=0$. Consider the relation $(S\circ R\circ Q)*R=(R\circ S)*R.$ This implies $S*(R*R)+Q*R+R*(Q*R)=R*(S*R)$ since $S*(Q*R)\subseteq A^{5}=0$. We can consider the component corresponding to $P$ in this equation. This implies $-j+z_{1}=j'z_{1}$, and since $j'=-1$ we get $j=2z_{1}$. Since we previously showed that $z_{1}=-j$ it follows that $z_{1}=j=0$.
			\item Consider the equation $S*(R*R)+Q*R+R*(Q*R)=R*(S*R)$ from the previous sub-point. Comparing the component at $S$ in this equation we get  $z=zj'+ i'y$, hence $2z=i'y$.  
			
			We will now show that $i'=1$ hence $2z=y$. We will now consider the relation \[S\circ R\circ Q=R\circ S.\]
			This implies \[S*R+S*Q+R*Q+Q=0\] since $S*(R*Q)\subseteq S*A^{3}=0$ and $R*S\subseteq A*A^{4}=A^{5}=0$. Comparing the component at $P$ in this equation we get $i' -1=0$, hence $i'=1$, as required. Comparing component at $S$ we get $k'+ z=0$ hence $k'=-z$. 
		\end{enumerate}	
	\end{proof}

	\begin{remark}
		A general comment on Proposition \ref{listings} to explain the approach. In first subpoint we show that \[A=F_{p}P+F_{p}R+F_{p}S+F_{p}Q\]
		and then so we can write each product, for example $ Q*Q $ as $ iP+jR+kS+lQ $ for some $ i,j,k,l $ in $ \mathbb{F}_{p} $. Next we substitute it to the equations, which need to hold in the brace, listed in Theorem \ref{Q}, and then list the consequences.
	\end{remark}

	\section{The Braces are Well-defined}
	\subsection{The Map $\lambda $}
	We now consider how our operation $*$ looks in our brace. In the previous section we showed that if $(A, +, \circ )$ is brace which is not right nilpotent whose group $(A, \circ )$ is the group XV, then  
	\[ P*P=0,\ P*Q=0 ,\ P*R=yS ,\ P*S=0,\]
	\[ Q*P=0,\ Q*Q=-yS,\ Q*R=2^{-1}yS,\ Q*S=0,\]
	\[ R*P=yS,\ R*Q=2^{-1}yS,\ R*R=iP+kS,\ R*S=0,\]
	\[ S*P=0,\ S*Q=-P,\ S*R=-Q+P-2^{-1}yS,\ S*S=0.\]
	where $ y,i,k \in \mathbb{F}_{p} $ with $ y\neq0 $.
	
	
	Therefore, we get the formulas for the $\lambda $ maps in $A$, where $\lambda_{a}(b)=a*b+b$. 
	\[\lambda_{P}(P)=P,\ \lambda_{P}(Q)=Q,\ \lambda_{P}(R)=R+yS,\ \lambda_{P}(S)=S,\]
	\[\lambda_{Q}(P)=P,\ \lambda_{Q}(Q)=Q-yS,\ \lambda_{Q}(R)=R+2^{-1}yS,\ \lambda_{Q}(S)=S,\]
	\[\lambda_{R}(P)=P+yS,\ \lambda _{R}(Q)=Q+2^{-1}yS,\ \lambda _{R}(R)=R+iP+kS,\ \lambda _{R}(S)=S,\]
	\[\lambda_{S}(P)=P,\ \lambda _{S}(Q)=Q-P,\ \lambda_{S}(R)=R-Q+P-2^{-1}yS,\ \lambda_{S}(S)=S.\]
	
	Let $(B', +)$ be the abelian group generated by elements $Q', P', R', S', 1$  where every element has order $p$, so $pQ'=pQ'=pR'=pS'=p1=0$. Therefore, \[B'=\mathbb F_{p}P'+\mathbb F_{p}Q'+\mathbb F_{p}R'+\mathbb F_{p}S'+\mathbb F_{p}1,\] where $1$ is some element of $B$.
	
	Let $(B, +)$ be the subgroup of $(B', +)$ generated by elements $P', Q', R', S'$. Define the homomorphisms $\lambda_{P}, \lambda_{Q}, \lambda _{R}, \lambda_{S}$ from $(B', +)$  to $(B', +)$ by using the table for $\lambda $ maps in $B$ above, and by setting $\lambda _{g}(1)=g'+1$ for $g\in \{P, Q, R, S\}$.  
	
	So we have the following relations which depend on parameters $i, k, y\in \mathbb F_{p}$ with $y\neq 0$ given below.
	\begin{adjustwidth}{-1.5cm}{}
	\begin{align}\label{Table1}
		\lambda_{P}(P')&=P',\ \lambda_{P}(Q')=Q',\ \lambda _{P}(R')=R'+yS',\ \lambda_{P}(S')=S',\ \lambda_{P}(1)=P'+1,\\
		\lambda_{Q}(P')&=P',\ \lambda_{Q}(Q')= Q'-yS',\ \lambda _{Q}(R')=2^{-1}yS'+R',\ \lambda_{Q}(S')=S',\ \lambda _{Q}(1)=Q'+1,\nonumber\\
		\lambda_{R}(P')&=P'+yS',\ \lambda_{R}(Q')= Q'+2^{-1}yS',\ \lambda _{R}(R')=iP'+kS'+R',\ \lambda_{R}(S')=S',\ \lambda _{R}(1)=R'+1,\nonumber\\
		\lambda_{S}(P')&=P',\ \lambda _{S}(Q')= Q'-P',\ \lambda _{S}(R')=R' -Q'+P'-2^{-1}yS', \lambda _{S}(S')=S', \lambda _{S}(1)= S'+1.\nonumber
	\end{align}
	\end{adjustwidth}
	
	\begin{theorem}\label{17} Let $p>3$ be a prime number and $i,k,y\in \mathbb F_{p}$ with $y\neq 0$. Let $(B', +)$ and $(B, +)$ be defined as above. Then both $(B', +)$ and $(B, +)$ are linear spaces over the field $\mathbb F_{p}$. Let linear maps $\lambda_{g}: (B', +)\longrightarrow (B',+)$ for $g\in \{R, Q, P, S\}$ be defined by lines 1-4 of (\ref{Table1}) above. Then the following relations hold for any $a\in B'$.
		\begin{enumerate}
			\item \[\lambda_{Q}(\lambda_{S}(a))=\lambda_{S}(\lambda_{Q}(\lambda_{ P}(a))),\]
			\item \[\lambda_{Q}(\lambda_{R}(a))=\lambda_{R}(\lambda_{Q}(a)),\]
			\item \[\lambda_{P}(\lambda_{S}(a))=\lambda_{S}(\lambda_{P}(a)),\ \lambda_{P}(\lambda_{Q}(a))=\lambda_{Q}(\lambda_{P}(a)),\ \lambda_{P}(\lambda_{R}(a))=\lambda_{R}(\lambda_{P}(a)),\]
			\item \[\lambda_{R}(\lambda_{S}(a))=\lambda_{S}(\lambda_{R}(\lambda_{Q}(a))),\]
			\item \[\lambda_{P}^{p}(a)=\lambda_{Q}^{p}(a)=\lambda _{R}^{p}(a)=\lambda _{S}^{p}(a)=a.\]
		\end{enumerate}
		Therefore, the maps $\lambda_{g}: (B', +)\longrightarrow (B',+)$ defined inductively using the formula 
		\[\lambda _{g\circ g'}(a)=\lambda _{g}(\lambda _{g'}(a))\]
		for $g, g'\in (G, \circ )$ and $a\in (B', +)$ are well defined automorphisms of $B'$. Moreover, maps $\lambda_{g}:B\longrightarrow B$ with the domain restricted to $B$ are  automorphisms of $(B, +)$.
	\end{theorem}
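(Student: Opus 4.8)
The plan is to reduce everything to a finite linear-algebra check and then feed it into the universal property of the presentation of group XV. First I would record that $(B',+)$ and $(B,+)$ are elementary abelian $p$-groups by construction, hence $\mathbb F_p$-vector spaces, and that the four maps $\lambda_P,\lambda_Q,\lambda_R,\lambda_S$ given by lines 1--4 of (\ref{Table1}) are $\mathbb F_p$-linear. Since each of (1)--(5) is an identity between $\mathbb F_p$-linear endomorphisms of $B'$, it is enough to verify it on the five basis vectors $P',Q',R',S',1$; so I would substitute (\ref{Table1}) and evaluate both sides of each relation on each basis vector.

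On the four vectors $P',Q',R',S'$ the relations (1)--(4) are exactly the operator form, restricted to $A=\mathbb F_pP+\mathbb F_pQ+\mathbb F_pR+\mathbb F_pS$, of the group relations $Q\circ S=S\circ Q\circ P$, $Q\circ R=R\circ Q$, ``$P$ central'', $R\circ S=S\circ R\circ Q$; the numerical coefficients appearing in (\ref{Table1}) (such as $Q*Q=-yS$, $S*Q=-P$, $S*R=-Q+P-2^{-1}yS$) were produced from precisely these relations in Proposition \ref{listings}, so this half of the work is the same computation, now read in the forward direction. On the remaining basis vector $1$ one uses $\lambda_g(1)=g'+1$ together with the action on $B$ just checked; e.g.\ for relation (1) one gets $\lambda_Q(\lambda_S(1))=\lambda_Q(S'+1)=Q'+S'+1$ on the left and $\lambda_S(\lambda_Q(\lambda_P(1)))=\lambda_S(P'+Q'+1)=Q'+S'+1$ on the right, and the other relations are similar. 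For relation (5) I would note that each $N_g:=\lambda_g-\mathrm{id}$ is nilpotent with small index: from (\ref{Table1}) one reads $N_P^2=0$, $N_Q^3=0$, $N_S^3=0$, and $N_R^4=0$ (for instance $N_R$ sends $R'\mapsto iP'+kS'\mapsto iyS'\mapsto 0$ and $1\mapsto R'\mapsto iP'+kS'\mapsto iyS'\mapsto 0$). Then $\lambda_g^{p}=(\mathrm{id}+N_g)^{p}=\sum_{t\geq 0}\binom{p}{t}N_g^{t}=\mathrm{id}$, because $p>3$ makes $\binom{p}{1},\binom{p}{2},\binom{p}{3}$ all divisible by $p$; this is the one place where $p>3$, rather than merely $p>2$, is used, through the $\binom{p}{3}$ term in the expansion of $\lambda_R^{p}$. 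In particular each $\lambda_g$ is invertible, with inverse $\lambda_g^{p-1}$, hence lies in $\mathrm{Aut}(B',+)$.

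For the final ``therefore'' I would argue by presentations. Group XV is, by definition, the group on generators $P,Q,R,S$ with the relations displayed at the start of this section, and relations (1)--(5) say precisely that $\lambda_P,\lambda_Q,\lambda_R,\lambda_S\in\mathrm{Aut}(B',+)$ satisfy all of those defining relations. By the universal property of a group presentation there is then a unique homomorphism $\Lambda\colon(G,\circ)\to\mathrm{Aut}(B',+)$ with $\Lambda(g)=\lambda_g$ on generators; setting $\lambda_g:=\Lambda(g)$ for all $g\in G$ makes the inductive rule $\lambda_{g\circ g'}=\Lambda(g\circ g')=\Lambda(g)\Lambda(g')=\lambda_g\circ\lambda_{g'}$ hold automatically, and each $\lambda_g$ is an automorphism of $(B',+)$ by construction. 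Finally, lines 1--4 of (\ref{Table1}) show $\lambda_g(B)\subseteq B$ for $g$ a generator, whence $\lambda_w(B)\subseteq B$ for every word $w$ in the generators and therefore $\lambda_g(B)\subseteq B$ for all $g\in G$; as $\lambda_g$ is injective on $B'$ and $B$ is finite-dimensional, $\lambda_g|_B$ is an automorphism of $(B,+)$.

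I expect the main obstacle to be purely the bookkeeping in relations (1)--(4) evaluated on $R'$, where $\lambda_S(R')$ and $\lambda_R(R')$ each have three or more terms and the cancellations (involving $2^{-1}y$) must be tracked carefully, together with getting the nilpotency index in relation (5) exactly right for $\lambda_R$, since that is what forces the hypothesis $p>3$. Everything after the verification of (1)--(5) is formal.
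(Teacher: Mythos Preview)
Your proposal is correct and, at its core, follows the same strategy as the paper: reduce relations (1)--(5) to a finite linear-algebra check on the five basis vectors. The paper carries this out by writing the four $\lambda$-maps as explicit $5\times 5$ matrices $M_P,M_Q,M_R,M_S$ and verifying the identities $M_QM_S=M_SM_QM_P$, $M_QM_R=M_RM_Q$, $M_P$ central, $M_RM_S=M_SM_RM_Q$, and $M_g^{p}=\mathrm{Id}$ by direct multiplication; the subsequent extension to all of $G$ and the restriction to $B$ are argued exactly as you do.

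Where you genuinely diverge is in two places. First, for relation (5) the paper simply asserts $M_g^{p}=\mathrm{Id}$ by computation, whereas your argument via the nilpotency indices of $N_g=\lambda_g-\mathrm{id}$ and the vanishing of $\binom{p}{1},\binom{p}{2},\binom{p}{3}$ modulo $p$ is more conceptual and makes transparent exactly why the hypothesis $p>3$ (and not merely $p>2$) is needed, namely through $N_R^4=0$ and the $\binom{p}{3}$ term. Second, your appeal to the universal property of the presentation of group XV to produce the homomorphism $\Lambda\colon G\to\mathrm{Aut}(B',+)$ is crisper than the paper's phrasing in terms of ``defining $\lambda_g$ inductively''. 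One caution: your remark that relations (1)--(4) on $P',Q',R',S'$ are ``the same computation as Proposition~\ref{listings} read forward'' is good intuition but not itself a proof, since that proposition only extracts \emph{necessary} conditions from a subset of the relations; you still need the direct substitution you describe, just as the paper needs its matrix products.
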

	

	\begin{proof} To check these properties we will present the $\lambda $ maps in matrix form, and the above properties can be checked by multiplying the matrices $M_{g}$. Write the matrices of the linear maps $\lambda_{g}$ for $g\in \{R, Q, P, S\}$ in the base $e_{1}=1, e_{2}=R', e_{3}=Q', e_{4}=P', e_{5}=S'$ where $e_{i}$ denotes the vector with all entries zero except of the $i$-th  entry which is $1$. Therefore, we have matrices $M_{i}$ of dimension $5$ such that $\lambda_{g}(e_{j})=M_{g}e_{j}$ for each $g\in \{R,Q,P, S\}$ where  $e_{1}=1, e_{2}=R', e_{3}=Q', e_{4}=P', e_{5}=S'$. 
		
	We have the matrices
		\begin{align*}
			M_{P}&=\begin{bmatrix}
				1&0&0&0&0 \\
				0&1&0&0&0 \\
				0&0&1&0&0\\
				1&0&0&1&0\\
				0&y&0&0&1
			\end{bmatrix} \ &M_{Q}=\begin{bmatrix}
				1&0&0&0&0\\
				0&1&0&0&0\\
				1&0&1&0&0\\
				0&0&0&1&0\\
				0&2^{-1}y&-y &0&1\\
			\end{bmatrix}\\
			M_{R}&=\begin{bmatrix}
				1&0&0&0&0\\
				1&1&0&0&0\\
				0&0&1&0&0\\
				0&i&0&1&0\\
				0&k&2^{-1}y&y&1\\
			\end{bmatrix}\ &M_{S}=\begin{bmatrix}
				1&0&0&0&0\\
				0&1&0&0&0\\
				0&-1&1&0&0\\
				0&1&-1&1&0\\1&-2^{-1}y&0&0&1\\
			\end{bmatrix}
		\end{align*}
		observe now that, by direct calculation, we have 
		\[M_{Q}M_{S}=M_{S}M_{Q}M_{P},\ M_{Q}M_{R}=M_{R}M_{Q},\]
		\[M_{P}M_{S}=M_{S}M_{P},\ M_{P}M_{Q}=M_{Q}M_{P}, \ M_{P}M_{R}=M_{R}M_{P},\]
		\[M_{R}M_{S}=M_{S}M_{R}M_{Q},\]
		\[M_{P}^{p}=M_{Q}^{p}=M_{R}^{p}=M_{S}^{p}=Id,\]
		where $Id$ is the identity matrix. This proves the first part of our theorem.
		
		We can now define maps $\lambda_{g}: (B', +)\longrightarrow (B', +)$ inductively, using the formula \[\lambda_{g\circ g'}(a)=\lambda_{g}(\lambda_{g'}(a)),\]
		for $g, g'\in (G, \circ )$ and $a\in (B', +)$. Observe that the maps $\lambda _{g}$ are group homomorphisms of $(B', +)$ so they are linear maps over the field $\mathbb F_{p}$,  so $\lambda_{g}(a+b)=\lambda _{a}(b)+\lambda _{g}(b)$. This is because they are defined as compositions of maps $\lambda_{P}, \lambda_{Q}, \lambda_{R}, \lambda_{S}$ which are homomorphisms of $(B', +)$, so 
		\begin{align*}
			&\lambda_{P}(a+b)=\lambda_{P}(a)+\lambda_{P}(b),\ \lambda_{Q}(a+b)=\lambda_{Q}(a)+\lambda_{Q}(b),\\ &\lambda_{R}(a+b)=\lambda_{R}(a)+\lambda_{R}(b),\ \lambda_{S}(a+b)=\lambda_{S}(a)+\lambda_{S}(b).
		\end{align*}
		
		Notice that maps $\lambda_{P}, \lambda_{Q}, \lambda_{R}, \lambda_{S}$ are linear maps and can be represented by matrices, therefore their  compositions are also linear maps. Consequently, for any $g, h, j\in G$ \[\lambda_{(g\circ h)\circ j}=\lambda_{g\circ h}\lambda_{j}=(\lambda_{g}\lambda_{h})\lambda _{j}=\lambda_{g}(\lambda_{h}\lambda_{j})=\lambda_{g\circ (h\circ j)}.\]
		
		We will now show that maps $\lambda_{g}$ are automorphisms of $B'$.  Notice that the images of maps $\lambda_{P}, \lambda_{Q}, \lambda_{R}, \lambda_{S}$ are subsets of $(B', +)$, and that the kernels of these maps are zero. Therefore, the maps $\lambda_{P}, \lambda_{Q}, \lambda _{R}, \lambda_{S}$ with the domain restricted to $(B,+)$ as maps from $(B, +)$ to $(B, +)$ are automorphisms of $(B,+)$. It follows that maps $\lambda_{g}$ with the  domain restricted to $(B, +)$ for $g\in G$ are also automorphisms of $(B, +)$ because they are compositions of maps $\lambda_{P}, \lambda_{Q}, \lambda_{R}, \lambda_{S}$.
	\end{proof}
	
	
	\subsection{The Map f}
	Let $B, B'$ and maps $\lambda_{g}: (B',+)\longrightarrow (B',+)$  be defined as in Theorem \ref{17} for some $i,k,y\in \mathbb F_{p}$ with $y\neq 0$ and  $p>3$. We will consider $(B, +)$ as a set $B$. Define the map $f:G\longrightarrow B$ by \[f(g)=\lambda_{g} (1)-1,\]
	for $g\in (G, \circ )$. 

	\begin{theorem}\label{map} 
		Let $B, B'$ and maps $\lambda_{g}: (B',+)\longrightarrow (B',+)$  be defined as in Theorem \ref{17} for some $i,k,y\in \mathbb F_{p}$ with $y\neq 0$ and $p>3$. Then the map $f:G\longrightarrow B$ defined as
		\[f(g)=\lambda_{g} (1)-1\]
		for $g\in (G, \circ )$ is a bijective function from $G$ to $B$. 
	\end{theorem}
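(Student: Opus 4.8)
The plan is to show $f$ is a bijection by proving it is injective; since $|G| = |B| = p^4$, surjectivity will then follow automatically. The key observation is that $f$ is a cocycle for the action of $G$ on $(B,+)$ via the $\lambda$ maps: from the definition $f(g) = \lambda_g(1) - 1$ and the relation $\lambda_{g\circ h} = \lambda_g \circ \lambda_h$ (established in Theorem \ref{17}), one computes
\[
f(g\circ h) = \lambda_{g\circ h}(1) - 1 = \lambda_g(\lambda_h(1)) - 1 = \lambda_g(f(h) + 1) - 1 = \lambda_g(f(h)) + \lambda_g(1) - 1 = \lambda_g(f(h)) + f(g),
\]
using that $\lambda_g$ is linear (so $\lambda_g(f(h)+1) = \lambda_g(f(h)) + \lambda_g(1)$). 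Thus $f$ is a $1$-cocycle.

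The next step is to reduce injectivity to a statement about the filtration by the subspaces spanned by $P', Q', R', S'$. Concretely, I would read off from (\ref{Table1}) what $f$ does on the generators: $f(P) = P'$, $f(Q) = Q'$, $f(R) = R'$, $f(S) = S'$. Using the cocycle identity and the explicit matrices $M_P, M_Q, M_R, M_S$ from the proof of Theorem \ref{17}, I would compute $f$ on a general word. The cleanest route: every element of $G$ can be written uniquely in the normal form $g = S^{\xi}\circ R^{\gamma}\circ Q^{\beta}\circ P^{\alpha}$ with $0 \le \alpha,\beta,\gamma,\xi < p$ (this is the consequence of the group XV relations, where $P$ is central and the commutators push powers of $P$ and $Q$ to the right). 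Applying the cocycle rule repeatedly,
\[
f(S^{\xi}\circ R^{\gamma}\circ Q^{\beta}\circ P^{\alpha}) = f(S^{\xi}) + \lambda_{S^{\xi}}(f(R^{\gamma})) + \lambda_{S^{\xi}R^{\gamma}}(f(Q^{\beta})) + \lambda_{S^{\xi}R^{\gamma}Q^{\beta}}(f(P^{\alpha})),
\]
and each term $f(X^m) = (\mathrm{Id} + M_X + \cdots + M_X^{m-1})\cdot(\text{generator of }X)$ is computed from the matrices. One then checks that the coefficient of $R'$ in $f(g)$ is $\gamma$, that after subtracting off the $R'$-part the coefficient of $Q'$ is $\beta$ (up to a term depending only on already-determined data), then the coefficient of $P'$ recovers $\alpha$, and finally the $S'$-coefficient recovers $\xi$. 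In other words, $f$ is "triangular" with respect to the ordering $R', Q', P', S'$ on the normal-form exponents, so the map $(\alpha,\beta,\gamma,\xi) \mapsto f(g)$ is invertible; hence $f$ is injective.

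The main obstacle is bookkeeping: the $\lambda$ maps do not act diagonally — e.g.\ $\lambda_S$ mixes $Q'$ into $P'$ and $R'$ into $Q', P', S'$, and $\lambda_R, \lambda_Q, \lambda_P$ feed into the $S'$-coordinate — so one must be careful that when peeling off coordinates in the order $R' \to Q' \to P' \to S'$ the "lower" coordinates are genuinely determined before they are needed. The reason this works is structural: in the matrices $M_P, M_Q, M_R, M_S$, the only off-diagonal entries in rows/columns $2,3,4,5$ (the $R', Q', P', S'$ block) are \emph{below} the diagonal in the order $R' > Q' > P' > S'$ — $M_R$ sends $R' \mapsto R' + iP' + kS'$, $M_Q$ sends $R'\mapsto R' + 2^{-1}yS'$ and $Q' \mapsto Q' - yS'$, $M_S$ sends $Q'\mapsto Q'-P'$ and $R'\mapsto R' - Q' + P' - 2^{-1}yS'$, and $M_P$ sends $R' \mapsto R' + yS'$. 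So every $\lambda_g$ (a product of these) is lower-triangular in this basis with $1$'s on the diagonal of the $B$-block, which forces the claimed triangular structure on $f$ and makes the inversion routine once set up. I would present the argument by exhibiting the normal form, writing $f$ in coordinates via the telescoping sum above, and then noting triangularity gives injectivity, hence bijectivity by counting.
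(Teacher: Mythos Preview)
Your approach is essentially the paper's: reduce to injectivity by cardinality, write an arbitrary element of $G$ in a normal form, and peel off the exponents one coordinate at a time using the lower-triangular shape of the $\lambda$-matrices on the $B$-block. The paper does exactly this, working with the normal form $g'=R^{\gamma}\circ Q^{\beta}\circ P^{\alpha}\circ S^{\xi}$ and showing that $f(g')=0$ forces successively $\gamma=0$, $\beta=0$, and then $\alpha=\xi=0$.

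There is, however, a concrete slip in your version: with your normal form $g=S^{\xi}\circ R^{\gamma}\circ Q^{\beta}\circ P^{\alpha}$ the claimed triangularity fails. A direct computation gives the $Q'$-coefficient of $f(g)$ as $\beta-\gamma\xi$, not $\beta$ plus a term depending only on $\gamma$. The reason is that $\lambda_{S^{\xi}}$ is applied \emph{outermost}, and since $M_{S}$ sends $R'\mapsto R'-Q'+\cdots$, applying $M_{S}^{\xi}$ to the $\gamma R'$ already produced by $\lambda_{R^{\gamma}}$ feeds $-\gamma\xi$ into the $Q'$-slot. Lower-triangularity of each $M_{g}$ on the $R'>Q'>P'>S'$ block (which you correctly observe) only says that \emph{acting} by $\lambda_{g}$ moves mass downward; for $f$ itself to be triangular you also need the ``creation'' steps $\lambda_{X}(1)=1+X'$ to be applied from lowest to highest, i.e.\ $S$ innermost and $R$ outermost. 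That is precisely the paper's ordering $R^{\gamma}\circ Q^{\beta}\circ P^{\alpha}\circ S^{\xi}$, and with it your argument goes through verbatim. Alternatively, since you already have the cocycle identity, you can reduce injectivity to $f(g')=0\Rightarrow g'=1_{G}$ (if $f(g)=f(h)$ then $\lambda_{h}(f(h^{-1}\circ g))=0$, hence $f(h^{-1}\circ g)=0$); in that setting your normal form is fine too, because once $\gamma=0$ the cross-term $-\gamma\xi$ vanishes and the remaining coordinates peel off cleanly.
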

	\begin{proof} 
		We will now show that $f$ is a bijective function. Observe that groups $G$ and $(B, +)$ have the same cardinality, so to show that $f$ is a bijective function it suffices to show that $f$ is injective.
		
		Suppose on the contrary that $f$ is not bijective. Then we would have $f(g)=f(h)$ for some $g,h\in G$. This implies 
		$\lambda_{g}(1)=\lambda_{h}(1)$. Let $g'= h^{-1}\circ g$ so $\lambda _{g'}(1)=1$. Every element of $G$ can be written as a product of elements of some powers of elements $P, Q, R, S$. Therefore, there are some integers $0\leq \alpha , \beta , \gamma , \xi < p$ such that 
		\[g'= R^{\gamma}\circ Q^{\beta}\circ P^{\alpha}\circ S^{\xi}.\]
		
		Notice that it follows from Theorem \ref{17}  (using the fact that $\lambda _{a\circ b}(c)=\lambda_{a}\lambda_{b}(c)$ several times) that $\lambda_{  R^{\gamma}\circ Q^{\beta}\circ P^{\alpha}\circ S^{\xi}}(1)-1=\gamma R'+C$ where $C\in \mathbb F_{p}Q'+\mathbb F_{p}P'+\mathbb F_{p}S'$, hence \[\lambda _{g'}(1)-1=\gamma R'+C.\] Notice that $ \lambda_{g'}(1)=1$ implies $\gamma =0$, so \[g'=Q^{\beta}\circ P^{\alpha}\circ S^{\xi}.\]
		
		Notice that it follows from Theorem \ref{17}  (using the fact that $\lambda _{a\circ b}(c)=\lambda _{a}\lambda_{b}(c)$ several times) that $\lambda_{ Q^{\beta}\circ P^{\alpha}\circ S^{\xi}}(1)-1=\beta Q'+D$ where $D\in \mathbb F_{p}P'+\mathbb F_{p}S'$, hence \[\lambda_{g'}(1)-1=\beta Q'+D.\] Notice that $ \lambda_{g'}(1)=1$ implies $\beta =0$, so \[g'= P^{\alpha}\circ S^{\xi}.\]
		
		Observe now that by Theorem \ref{17} we get $\lambda_{g'}(1)=\lambda_{P^{\alpha}\circ S^{\xi}}(1)=\alpha P'+\xi S'+1$. Therefore, $\lambda_{g'}(1)=1$ implies $\lambda =\xi=0$, so $g'=g\circ h^{-1}$ is the identity element in $G'$, and so $g=h$ as required. 
	\end{proof}
	
	\subsection{Bijective Cocycles and Braces}\label{BC}
	\begin{definition}\label{mapf} Let $(G, \circ)$ be a group, $(H, +)$ be an abelian group, $p:G\longrightarrow\mathrm{Aut}(H)$ be an action and $f:G\longrightarrow  H$ be such that for $g,h\in G$ we have 
		\[ f( g \circ h)=f(g)+ p_{g}(f(h)).\]
	Then $f$ is a $1$-cocycle with respect to $p_{g}$. 
	\end{definition}
	The following result is known (cf. Jespers, Ced{\' o}, Okninski, Rio \cite[Theorem 2]{Rio}).
	
	\begin{theorem}[Theorem 2, \cite{Rio}]\label{cocycles}
		Let $(G, \circ)$ be a group, $(H, +)$ be an abelian group, $p_{g}$  be an automorphism of $H$ for each $g$ in $G$, and $f: G\longrightarrow H$ be a bijective 1-cocycle with respect to $p$. Define an operation on $G$ for $g,h\in G$ by \[g + h =f^{-1}\left(f(g) +f(h)\right).\]
		Then $(G, \circ , +)$ is a brace.
	\end{theorem}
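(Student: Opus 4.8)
The plan is to transport the abelian group structure of $H$ onto $G$ along the bijection $f$, and then to recognise that the cocycle identity is precisely what forces the associated "$\lambda$-map" of the candidate brace to be additive; the single brace axiom $a\circ(b+c)+a=a\circ b+a\circ c$ then drops out by a one-line rewriting. There is essentially no deep obstacle here — the result is a packaging statement — so the only thing that needs care is keeping the two operations on the set $G$ notationally apart and checking that $f$ is compatible with identities and inverses.

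First I would note that, since $f$ is a bijection, the rule $g+h=f^{-1}\!\left(f(g)+f(h)\right)$ is a well-defined binary operation making $(G,+)$ an abelian group, with $f\colon (G,+)\to (H,+)$ a group isomorphism and additive inverse $-g=f^{-1}(-f(g))$. Next I would check that the additive identity coincides with the multiplicative identity: putting $h=1_{G}$ in the cocycle relation gives $f(g)=f(g)+p_{g}(f(1_{G}))$, so $p_{g}(f(1_{G}))=0$, and since $p_{g}$ is injective we get $f(1_{G})=0$; hence $1_{G}$ is the identity of $(G,+)$ as well. Together with the hypothesis that $(G,\circ)$ is a group, this sets up the two underlying structures a brace requires.

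The key step is the following. For $g\in G$ define $\lambda_{g}\colon G\to G$ by $\lambda_{g}(h)=(g\circ h)-g$, the difference taken in $(G,+)$. Using the cocycle relation and the fact that $f$ preserves additive inverses, one computes
\[
f(\lambda_{g}(h))=f(g\circ h)-f(g)=\big(f(g)+p_{g}(f(h))\big)-f(g)=p_{g}(f(h)),
\]
so $\lambda_{g}=f^{-1}\circ p_{g}\circ f$. Since $p_{g}\in\mathrm{Aut}(H,+)$ and $f$ is an additive isomorphism, $\lambda_{g}$ is an automorphism of $(G,+)$; in particular $\lambda_{g}(b+c)=\lambda_{g}(b)+\lambda_{g}(c)$. (One may also record that $\lambda_{g\circ h}=\lambda_{g}\lambda_{h}$, which follows from $p$ being a homomorphism; this gives the usual $\lambda$-map description of the brace, although it is not needed for the axiom below.)

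Finally I would verify the defining identity. Writing $a\circ x=\lambda_{a}(x)+a$ and using additivity of $\lambda_{a}$ together with commutativity of $+$,
\[
a\circ(b+c)+a=\lambda_{a}(b+c)+a+a=\lambda_{a}(b)+\lambda_{a}(c)+a+a=(\lambda_{a}(b)+a)+(\lambda_{a}(c)+a)=a\circ b+a\circ c,
\]
which is exactly the brace axiom in the form quoted earlier in the paper. Hence $(G,\circ,+)$ is a left brace. As noted, I do not expect a genuine difficulty; the entire content is concentrated in the displayed computation showing that the cocycle condition makes $\lambda_{g}$ additive, and the rest is bookkeeping about the transported additive structure.
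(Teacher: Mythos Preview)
The paper does not give its own proof of this theorem: it is quoted verbatim as a known result from \cite{Rio} and used as a black box in the construction of Section~\ref{BC}. So there is no ``paper's proof'' to compare against.

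That said, your argument is correct and is exactly the standard one. Transporting the additive structure along the bijection $f$, identifying $\lambda_{g}=f^{-1}\circ p_{g}\circ f$ from the cocycle identity, and then reading off the brace axiom from additivity of $\lambda_{g}$ is precisely how this statement is usually established; nothing is missing.
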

	
	\begin{theorem}\label{main} Let $p>3$ be a prime number. Let $(G, \circ)$ be the group XV, and let automorphisms of $(B,+)$, given by $\lambda _{g}:(B,+)\longrightarrow (B,+)$, be defined as in Theorem \ref{17} for fixed $y, i, k$. Then the group $(G, \circ)$ with the addition defined as \[g + h =f^{-1}\left(f(g)+f(h)\right),\]
		where $f$ is defined as in Theorem \ref{map}, is a brace. 
	\end{theorem}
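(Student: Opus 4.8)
The plan is to deduce the statement directly from Theorem \ref{cocycles}, taking the abelian group $(H,+)$ to be $(B,+)$, the action $p$ to be $p_{g}=\lambda_{g}$, and $f$ to be the map of Theorem \ref{map}. So I need to check three things: that $g\mapsto \lambda_{g}$ is a well-defined action of $G$ by automorphisms of $B$; that $f$ is a bijection $G\to B$; and that $f$ is a $1$-cocycle with respect to this action. The first two are already in hand. By Theorem \ref{17} the linear maps $\lambda_{P},\lambda_{Q},\lambda_{R},\lambda_{S}$ satisfy exactly the matrix identities corresponding to the defining relations of the group XV together with $M_{g}^{p}=\mathrm{Id}$, so the assignment $g\mapsto \lambda_{g}$ extends unambiguously — independently of the chosen word in $P,Q,R,S$ representing $g$ — to a homomorphism $G\to\mathrm{Aut}(B)$ satisfying $\lambda_{g\circ h}=\lambda_{g}\circ\lambda_{h}$, with each $\lambda_{g}$ restricting to an automorphism of $(B,+)$. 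By Theorem \ref{map} the map $f(g)=\lambda_{g}(1)-1$ is a bijection from $G$ onto $B$, and from the computations there $f(g)$ genuinely lands in $B$ rather than merely in $B'$.

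Then I would verify the cocycle identity of Definition \ref{mapf}, namely $f(g\circ h)=f(g)+\lambda_{g}(f(h))$ for $g,h\in G$. This is a one-line computation using $\lambda_{g\circ h}=\lambda_{g}\lambda_{h}$ and the additivity of $\lambda_{g}$:
\[
f(g\circ h)=\lambda_{g\circ h}(1)-1=\lambda_{g}\bigl(\lambda_{h}(1)\bigr)-1=\bigl(\lambda_{g}(1)-1\bigr)+\lambda_{g}\bigl(\lambda_{h}(1)-1\bigr)=f(g)+\lambda_{g}\bigl(f(h)\bigr).
\]
Hence $f$ is a bijective $1$-cocycle with respect to $p_{g}=\lambda_{g}$, and Theorem \ref{cocycles} applies verbatim: $(G,\circ,+)$ with $g+h=f^{-1}\bigl(f(g)+f(h)\bigr)$ is a brace. (One may add that, since the $\lambda_{g}$ are $\mathbb F_{p}$-linear, $(G,+)$ is an $\mathbb F_{p}$-vector space and the resulting brace is an $\mathbb F_{p}$-brace, but the theorem as stated only asserts it is a brace.)

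I do not expect any serious obstacle: essentially all the work has been carried out in Theorems \ref{17} and \ref{map}, and what remains is bookkeeping. The only points that need to be made explicit are that the matrix relations checked in Theorem \ref{17} are precisely what makes $g\mapsto\lambda_{g}$ well defined on the group $G$ — so that $\lambda_{g\circ h}=\lambda_{g}\lambda_{h}$ is a legitimate identity and the displayed cocycle computation is valid — and that $f$ takes values in $B$, so that $f^{-1}(f(g)+f(h))$ is meaningful; both were arranged in the constructions of $B$, $B'$, and $f$ preceding this statement.
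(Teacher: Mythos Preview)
Your proposal is correct and follows essentially the same approach as the paper: invoke Theorem \ref{cocycles} with $p_{g}=\lambda_{g}$, cite Theorem \ref{17} for the action and Theorem \ref{map} for bijectivity, and verify the cocycle identity by the one-line computation $f(g\circ h)=\lambda_{g}(\lambda_{h}(1))-1=(\lambda_{g}(1)-1)+\lambda_{g}(\lambda_{h}(1)-1)$. If anything, you are slightly more explicit than the paper in noting that the matrix relations make $g\mapsto\lambda_{g}$ well defined on $G$ and that $f$ lands in $B$.
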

	\begin{proof} 
		By Theorem \ref{cocycles} it suffices to show that the map $f$ is a bijective cocycle with respect to the map 
		\[p_{g}=\lambda_{g}.\] By Theorem \ref{map} $f$ is a bijective function. 
		To show that $f$ is a cocycle, we need to show that  
		\[ f(g\circ h)=f(g)+ p_{g}(f(h)).\]
		By using the formula 
		\[f(g)=\lambda_{g} (1)-1,\]
		the left hand side becomes $f(g\circ h)=\lambda_{g\circ h}(1)-1$ and  the right hand side becomes \[f(g)+ p_{g}(f(h))=\left(\lambda_{g}(1)-1\right) +\lambda_{g}\left(\lambda _{h}(1)-1\right)=\lambda_{g\circ h}(1)-1\]
		since $\lambda _{a\circ b}(1)=\lambda _{a}(\lambda _{b}(1))$ by the definition of $\lambda $, and $\lambda_{g}(x+y)=\lambda_{g}(x)+\lambda_{g}(y)$ since $\lambda $ is a automorphism of $(B', +)$ by Theorem \ref{17}.
	\end{proof}

	\section{All Braces}\label{allbraces}
	In this section we show that in Theorem \ref{main} we constructed all the $\mathbb F_{p}$-braces which are not right nilpotent with multiplicative group XV.
	
	We will refer to the Multiplicative Table. The following set of rules will be called the Multiplicative Table. 
	\[ P*P=0,\ P*Q=0,\ P*R=yS,\ P*S=0,\]
	\[ Q*P=0,\ Q*Q=-yS,\ Q*R=2^{-1}yS,\ Q*S=0,\]
	\[ R*P=yS,\ R*Q=2^{-1}yS,\ R*R=iP+kS,\ R*S=0,\]
	\[ S*P=0,\ S*Q=-P,\ S*R=-Q+P-2^{-1}yS,\ S*S=0.\]
	
	Notice, that using the formulas $\lambda _{a}(b)=a*b+b$ we can write the Multiplicative Table in the following form. 
	\[ \lambda_{P}(P)=P,\ \lambda_{P}(Q)=Q,\ \lambda_{P}(R)=R+yS,\ \lambda _{P}(S)=S,\]
	\[ \lambda_{Q}(P)=P,\ \lambda_{Q}(Q)=Q-yS,\ \lambda_{Q}(R)=R+2^{-1}yS,\ \lambda_{Q}(S)=S,\]
	\[ \lambda_{R}(P)=P+yS,\ \lambda_{R}(Q)=Q+2^{-1}yS,\ \lambda_{R}(R)=R+iP+kS,\ \lambda _{R}(S)=S,\]
	\[ \lambda_{S}(P)=P,\ \lambda_{S}(Q)=Q-P,\ \lambda_{S}(R)=R-Q+P-2^{-1}yS,\ \lambda _{S}(S)=S.\]
	We can obtain the Multiplicative Table by putting $a*b=\lambda _{a}(b)-b$.
	
	\begin{theorem}\label{9}
		Let $p>3$ be a prime number. Let $(A, +, \circ )$ be the brace constructed in Theorem \ref{main} for fixed $i,k,y\in \mathbb F_{p}$ with $y\neq 0$. Then $(A, +, \circ )$ satisfies the Multiplicative Table above for the same $i,k,y$. 
	\end{theorem}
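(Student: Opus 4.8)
The plan is to unwind the construction of Theorem \ref{main} and transport the abstract maps $\lambda_{g}$ of Theorem \ref{17} across the cocycle $f$. Recall that the brace $(A,+,\circ)$ produced there has underlying set $G$, that $\circ$ is the group operation of $G$, and that the addition is $g+h=f^{-1}\!\left(f(g)+f(h)\right)$; thus $f$ is, by construction, an isomorphism of abelian groups from $(A,+)$ onto $(B,+)$. Evaluating the last entry of each line of (\ref{Table1}) gives $f(P)=\lambda_{P}(1)-1=P'$, and likewise $f(Q)=Q'$, $f(R)=R'$, $f(S)=S'$, so $f$ identifies the generators of $A$ with the corresponding primed basis vectors of $B$.

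The key step is the translation identity
\[ f\!\left(a\circ b-a\right)=\lambda_{a}\!\left(f(b)\right)\qquad\text{for all }a,b\in A, \]
where on the left the subtraction is that of $(A,+)$ and on the right $\lambda_{a}$ is the map of Theorem \ref{17}. This follows at once from the cocycle relation $f(a\circ b)=f(a)+\lambda_{a}(f(b))$ verified in the proof of Theorem \ref{main}, together with additivity of $f$: indeed $f(a\circ b-a)=f(a\circ b)-f(a)=\lambda_{a}(f(b))$. Since in the brace $a*b=(a\circ b-a)-b$, applying $f$ once more yields $f(a*b)=\lambda_{a}(f(b))-f(b)$.

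It remains to specialise to $a,b\in\{P,Q,R,S\}$, for which $f(a)=a'$ and $f(b)=b'$, so that $f(a*b)=\lambda_{a}(b')-b'$; the right-hand side is read off directly from lines 1--4 of (\ref{Table1}). For instance $f(R*R)=\lambda_{R}(R')-R'=iP'+kS'=f(iP+kS)$, whence $R*R=iP+kS$ by injectivity of $f$, and $f(S*R)=\lambda_{S}(R')-R'=-Q'+P'-2^{-1}yS'=f\!\left(-Q+P-2^{-1}yS\right)$, and so on for the remaining products. In each case the value obtained is exactly the corresponding entry of the Multiplicative Table, for the same $i,k,y$, which completes the argument. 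There is no real obstacle beyond bookkeeping once the translation identity is in place; the two points needing care are that the $\lambda_{a}$ appearing in the cocycle hypothesis of Theorem \ref{main} is precisely the abstract map of Theorem \ref{17} (so the identity holds verbatim), and that $f$, being additive, commutes with the integer scalars such as $2^{-1}y$ occurring in the table, so that, for example, $2^{-1}yS$ is sent to $2^{-1}yS'$.
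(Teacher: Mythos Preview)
Your argument is correct and is essentially the same as the paper's: both transport the relations of (\ref{Table1}) through $f$ using the cocycle identity $f(g\circ h)=f(g)+\lambda_{g}(f(h))$ and the additivity of $f$, then invert $f$ to recover the Multiplicative Table. Your presentation is slightly more streamlined in that you isolate the identity $f(a*b)=\lambda_{a}(f(b))-f(b)$ explicitly before specialising, whereas the paper works with $f(g\circ h)-f(g)$ and leaves the final step implicit, but the substance is identical.
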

	\begin{proof} 
		Let $f(g)=\lambda_{g}(1)-1$ be defined as in Theorem \ref{main}. Recall that maps $\lambda _{g}:B'\rightarrow B'$ were defined in Theorem \ref{17}. By Theorem \ref{17} functions $\lambda_{g}$ satisfy relations from (\ref{Table1}) above Theorem \ref{17}. Consider relations given by (\ref{Table1}), each of them is of the form  
		$\lambda_{g}(h')=\alpha_{1}R'+\alpha_{2}Q'+\alpha_{3}P'+\alpha_{4}S'$, for some $g\in \{P, Q, R, S\}$, $h'\in \{P', Q', R', S'\}$ and $\alpha_{1}, \alpha_{2}, \alpha_{3}, \alpha_{4}\in \mathbb F_{p}$. Let $h\in G$ be such that $\lambda _{h}(1)-1=h'$. Notice, that our relation can be written as 
		\begin{align*}
			&(\lambda_{g\circ h}(1) -1)-(\lambda_{g}(1)-1)=\lambda_{g}(\lambda_{h}(1)-1)\\
			&=\alpha_{1}(\lambda_{R}(1)-1)+\alpha_{2}(\lambda_{Q}(1)-1)+\alpha_{3}(\lambda_{P}(1)-1)+\alpha_{4}(\lambda_{S}(1)-1).
		\end{align*} Since $\lambda_{g}(1)-1=f(g)$ we can write it as \[f(g\circ h)-f(g)=\alpha_{1}f(R)+\alpha_{2}f(Q)+\alpha_{3}f(P)+\alpha_{4}f(S).\]So we know that function $f:G\longrightarrow B$ used in Theorem \ref{main} satisfies the above relation. Because $f: G\longrightarrow B$ is a bijective function, we can apply the map $f$ to both sides of this relation to obtain an equivalent relation  \[f^{-1}\left(f(g\circ h)-f(g)\right)=f^{-1}\left(\alpha _{1}f(R)+\alpha _{2}f(Q)+\alpha _{3}f(P)+\alpha _{4}f(S)\right).\]
		Using the formula for addition in brace $(A, +,\circ)$ from Theorem \ref{main} we find that the relation \[g\circ h-g=\alpha _{1}R+\alpha _{2}Q+\alpha _{3}P+\alpha _{4}S\] holds in brace $A$. In this way we can obtain all relations from the Multiplicative Table. 
	\end{proof}
	
	\begin{theorem}\label{100} Let $P, Q, R, S$ be elements of some set $\mathcal{S}$. Let $(A, +, \circ)$ be an $\mathbb F_{p}$-brace such that $A=\mathbb F_{p}R+\mathbb F_{p}Q+\mathbb F_{p}P+\mathbb F_{p}S$ as a linear $\mathbb F_{p}$-space. Suppose that elements $P, Q, R, S$ satisfy the relations from the Multiplicative Table for fixed $i,k,y\in\mathbb F_{p}$ with $y\neq 0$ and $p>3$. Then the following holds.
		\begin{enumerate}
			\item $P, Q, R, S$ satisfy the defining relations of the group XV. So the following holds.
			\[Q\circ S=S\circ Q\circ  P,\ Q\circ R=R\circ Q,\ P\circ S=S\circ P,\ P\circ Q=Q\circ P,\ P\circ R=R\circ P,\]
			\[ R\circ S=S\circ R\circ Q,\ P^{p}=Q^{p}=R^{p}=S^{p}=0.\]
			\item Every element from $\mathbb F_{p}R+\mathbb F_{p}Q+\mathbb F_{p}P+\mathbb F_{p}S$ can be written in an unique way as $R^{\alpha}\circ Q^{\beta}\circ P^{\gamma}\circ S^{\xi}$ for some $\alpha , \beta , \gamma,  \xi \leq p$.  
			\item The multiplication of any two elements from $\mathbb F_{p}R+\mathbb F_{p}Q+\mathbb F_{p}P+\mathbb F_{p}S$ is uniquely determined. Therefore, $A$ is uniquely determined by the Multiplicative Table and by elements $i,k,y$. 
			\item The group $(A, \circ)$ is generated as a group by elements $P, Q, R, S$ and it satisfies the relations of group XV, and it has $p^{4}$ elements, hence it is group XV. 
			\item The brace $(A, +, \circ)$ is not right nilpotent.
		\end{enumerate}
	\end{theorem}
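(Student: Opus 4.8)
The plan is to prove the five parts in order, since each uses the previous ones. Throughout I use only identities valid in any brace: $g\circ h=g+h+g*h$, the twisted distributivity $(g+h+g*h)*k=g*k+h*k+g*(h*k)$ of (\ref{BR}), additivity and $\mathbb F_{p}$-linearity of $*$ in the second argument, $\lambda_{a}(b)=a*b+b$, and $\lambda_{a\circ b}=\lambda_{a}\lambda_{b}$.

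\textbf{Part 1.} The six ``word'' relations of group XV are verified directly from the Multiplicative Table. For instance $Q*S=0$ gives $Q\circ S=Q+S$, while $S*Q=-P$ gives $S\circ Q=S+Q-P$ and $(S\circ Q)*P=S*P+Q*P+S*(Q*P)=0$, so $S\circ Q\circ P=(S+Q-P)+P=S+Q$; hence $Q\circ S=S\circ Q\circ P$. The relations $Q\circ R=R\circ Q$, $P\circ S=S\circ P$, $P\circ Q=Q\circ P$, $P\circ R=R\circ P$ follow at once from the symmetric entries of the table, and $R\circ S=S\circ R\circ Q$ is checked just like the first one, using $S*(R*Q)=2^{-1}y\,(S*S)=0$. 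For the power relations, $g^{\circ n}=g^{\circ(n-1)}+\lambda_{g^{\circ(n-1)}}(g)=g^{\circ(n-1)}+\lambda_{g}^{n-1}(g)$ gives $g^{\circ p}=\bigl(\sum_{m=0}^{p-1}\lambda_{g}^{m}\bigr)(g)$. Since $|A|=p^{4}$, the theorem of Rump recalled above gives $A^{5}=0$, and since $g*A^{i}\subseteq A^{i+1}$ the operator $N_{g}=\lambda_{g}-\mathrm{id}$, which is $a\mapsto g*a$, satisfies $N_{g}^{4}=0$; hence $\sum_{m=0}^{p-1}\lambda_{g}^{m}=\sum_{k=0}^{3}\binom{p}{k+1}N_{g}^{k}=0$, because $\binom{p}{1},\binom{p}{2},\binom{p}{3},\binom{p}{4}$ are all divisible by the prime $p>3$. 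Therefore $P^{\circ p}=Q^{\circ p}=R^{\circ p}=S^{\circ p}=0$, which finishes part 1.

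\textbf{Parts 2, 3 and 4.} By part 1 the defining relations of XV hold in $(A,\circ)$, so any word in $P,Q,R,S$ can be sorted into the form $R^{\alpha}\circ Q^{\beta}\circ P^{\gamma}\circ S^{\xi}$ with $0\le\alpha,\beta,\gamma,\xi<p$ (push the $R$'s to the left using $R\circ Q=Q\circ R$, $R\circ P=P\circ R$ and $S\circ R=R\circ S\circ Q^{p-1}$; collect the $Q$'s using $S\circ Q=Q\circ S\circ P^{p-1}$; then $P$, which is central; then $S$). In particular $P,Q,R,S$ generate $(A,\circ)$ and there are at most $p^{4}$ such products. For uniqueness I would expand $R^{\alpha}\circ Q^{\beta}\circ P^{\gamma}\circ S^{\xi}$ as an element of $\mathbb F_{p}R+\mathbb F_{p}Q+\mathbb F_{p}P+\mathbb F_{p}S$ using the $\lambda$-table: the coefficients of $R$ and $Q$ come out exactly $\alpha$ and $\beta$, the coefficient of $P$ is $\gamma$ plus a term depending only on $\alpha$, and the coefficient of $S$ is $\xi$ plus a term depending only on $\alpha,\beta,\gamma$. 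Thus $(\alpha,\beta,\gamma,\xi)\mapsto R^{\alpha}\circ Q^{\beta}\circ P^{\gamma}\circ S^{\xi}$ is injective, and since $|A|=p^{4}$ it is a bijection onto $A$; this is part 2. Part 3 is then immediate: for $a,b\in A$ write both in normal form, multiply, and re-sort the product using the relations of part 1, so $\circ$ on $A$ (and hence the brace, with $+$ the prescribed vector-space structure) is uniquely determined by $i,k,y$. For part 4: $(A,\circ)$ is generated by $P,Q,R,S$, satisfies the XV relations, and has $p^{4}$ elements; since the abstract group presented by those relations has order $p^{4}$ (Burnside's classification \cite{Burnside}), $(A,\circ)$, being a quotient of it of the same cardinality, is isomorphic to group XV.

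\textbf{Part 5.} Here I compute the right radical chain directly. First, $a*b\in\mathbb F_{p}P+\mathbb F_{p}Q+\mathbb F_{p}S$ for all $a,b\in A$: this is read off the table for $a,b\in\{P,Q,R,S\}$, it passes to arbitrary $a$ by writing $a$ as a $\circ$-product of generators and iterating $(g\circ h)*k=g*k+h*k+g*(h*k)$, and to arbitrary $b$ by linearity. On the other hand $P*R=yS$ with $y\neq0$, $S*Q=-P$ and $S*R=-Q+P-2^{-1}yS$ put $P,Q,S$ into $A*A$. Hence $A^{(2)}=A*A=\mathbb F_{p}P+\mathbb F_{p}Q+\mathbb F_{p}S\neq0$. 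Since $P,S\in A^{(2)}$, the same three identities give $\mathbb F_{p}P+\mathbb F_{p}Q+\mathbb F_{p}S\subseteq A^{(3)}=A^{(2)}*A$, while $A^{(3)}\subseteq A*A=A^{(2)}$ by the first observation; thus $A^{(3)}=A^{(2)}$, and inductively $A^{(n)}=A^{(2)}\neq0$ for every $n$. Therefore $A$ is not right nilpotent. The laborious steps of this plan are the six word computations in part 1 and the explicit triangular expansion of $R^{\alpha}\circ Q^{\beta}\circ P^{\gamma}\circ S^{\xi}$ in part 2; the one essential use of $p>3$ is that $\binom{p}{3}$ and $\binom{p}{4}$ vanish modulo $p$, and a point to phrase carefully in part 4 is the appeal to Burnside's list to know that the presentation of group XV really defines a group of order exactly $p^{4}$, so that matching cardinalities forces the isomorphism.
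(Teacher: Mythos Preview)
Your proof is correct, and for parts 1, 3 and 4 it is essentially what the paper does (the paper defers the $p$-th power relations to the matrix identities $M_{g}^{p}=\mathrm{Id}$ in Theorem~\ref{17}, whereas your binomial identity $\sum_{m=0}^{p-1}\lambda_{g}^{m}=\sum_{k\ge 0}\binom{p}{k+1}N_{g}^{k}$ together with $N_{g}^{4}=0$ is a clean, self-contained replacement).

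Two places where your route genuinely differs from the paper's are worth noting. For part 2 the paper does not analyse coefficients of a normal form; instead it argues via the filtration: given $a=\alpha R+\beta Q+\gamma P+\xi S$ it observes $R^{p-\alpha}\circ a\in A^{2}=\mathbb F_{p}Q+\mathbb F_{p}P+\mathbb F_{p}S$, then $Q^{p-\beta'}\circ(R^{p-\alpha}\circ a)\in A^{3}=\mathbb F_{p}P+\mathbb F_{p}S$, and finally uses $P*P=P*S=S*P=S*S=0$ to finish. Your triangular-coefficient argument (the $R$-coefficient of $R^{\alpha}\circ Q^{\beta}\circ P^{\gamma}\circ S^{\xi}$ is $\alpha$ because no entry of the table produces $R$; the $P$-coefficient is $\gamma$ plus a function of $\alpha$ because $\lambda_{R}$ is the only generator map that creates $P$, etc.) is equally valid and arguably more transparent, but you should carry it out rather than leave it as ``I would expand''. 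For part 5 the paper is more economical: it simply observes $(S*Q)*\bigl(-y^{-1}R\bigr)=(-P)*(-y^{-1}R)=S$, so $S\in A^{(n)}$ for every $n$. Your computation that $A^{(3)}=A^{(2)}=\mathbb F_{p}P+\mathbb F_{p}Q+\mathbb F_{p}S$ gives more information but is longer; either argument suffices.
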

	\begin{proof} 
		\begin{enumerate}
			\item It follows from Theorem \ref{17}, since maps $\lambda_{P}, \lambda_{Q}, \lambda_{R}, \lambda_{S}$ defined as $\lambda_{g}(h)=g\circ h-g$ in brace $A$ have an analogous matrix form as maps $\lambda_{P}, \lambda_{Q}, \lambda_{R}, \lambda_{S}$ from $B'$ to $B'$ in Theorem \ref{17}. It also follows by a direct calculation.
			\item Let $a=\alpha R+\beta Q+\gamma P+\xi S$ for some $\alpha , \beta, \gamma, \xi\in \mathbb F_{p}$. Notice that $Q, P, S\in A^{2}$, by the Multiplicative Table. Consider the element $a_{1}=R^{p-\alpha }\circ a$. It follows that $a_{1}=-\alpha R+a+c$ for $c\in A^{2}$ (by the formula $(d\circ b)*a=d*a+d*(b*a)+b*a$ applied several times). Notice that since $P, Q, S\in A^{2}$ it follows that $R\notin A^{2}$ as otherwise $A=A^{2}$, hence $A=A^{2}=A*A^{2}=A^{3}$, so $A=A^{5}=0$. Consequently, $A^{2}=\mathbb F_{p}Q+\mathbb F_{p}P+\mathbb F_{p}S$. Therefore, \[a_{1}=\beta ' Q+\gamma ' P+\xi ' S,\] for some $\beta ' , \gamma ', \xi '\in \mathbb F_{p}$. 
			
			Consider the element $a_{2}=Q^{p-\beta '}\circ a_{1}$. It follows that $a_{2}\in -\beta 'Q+a_{1}+ A^{3}+A*a_{1}\subseteq A^{3}$ (since $P, S\in A^{3}$ and $A*A^{2}=A^{3}$). Notice that $ P, S\in A^{3}$, hence $Q\notin A^{3}$ as otherwise $A^{2}=A^{3}=A^{4}=A^{5}=0$. Therefore, $a_{2}=\gamma '' P+\xi '' S$ for some $\gamma '', \xi ''\in \mathbb F_{p}.$ By the Multiplicative Table $P*P=P*S=S*P=S*S=0$, so $a_{2}=P^{\gamma ''}\circ S^{\xi ''}$, and consequently $a=R^{\alpha }\circ Q^{\beta '}\circ   P^{\gamma ''}\circ S^{\xi ''}$. This concludes the proof.
			
			\item Let $a, b\in \mathbb F_{p} $. By item 2 above we can write $a=R^{\alpha}\circ Q^{\beta}\circ P^{\gamma}\circ S^{\xi}$ for some $\alpha , \beta , \gamma, \xi \leq p$. Then \[a*b+b=\lambda_{a}(b)=\lambda_{R^{\alpha}}(\lambda_{Q^{\beta }}(\lambda_{P^{\gamma}}(\lambda_{S^{\xi}}(b))),\] and it can be calculated using the Multiplicative Table to give a concrete element from $\mathbb F_{p}R+\mathbb F_{p}Q+\mathbb F_{p}P+\mathbb F_{p}S$. This determines uniquely a brace $(A, +, \circ)$. 
			
			\item Consider the subgroup $A'$ of $(A, \circ )$ generated by elements $R, Q, P, S$. Then by item 2 above we have that $A'$ has cardinality $p^{4}$, so $A'=(A, \circ)$. By item 1 above $A'$ satisfies all the relations of group XV, which concludes the proof (since $A'$ has the same cardinality as group XV).
			
			\item Observe that by Multiplicative Table we obtain $(S*Q)*({\frac {-1}y}R)=S$, so $A$ cannot be right nilpotent.
		\end{enumerate}
	\end{proof}

	\begin{theorem}\label{24} 
		Let $p>3$ be a prime number. Let $(A, +, \circ)$ an $\mathbb F_{p}$-brace which is not right nilpotent with multiplicative group $(A, \circ)$ being the group XV. Then the $\mathbb F_{p}$-brace $A$ is isomorphic to some brace constructed in Theorem \ref{main}. 
	\end{theorem}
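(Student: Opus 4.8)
The plan is to read off the structure constants of $A$ and identify it with one of the braces built in Theorem \ref{main}. The first step is to invoke Proposition \ref{listings}: since $A$ is an $\mathbb F_{p}$-brace which is not right nilpotent with multiplicative group XV, there are generators $P,Q,R,S$ of $(A,\circ)$ obeying the defining relations of group XV, with $S\in A^{4}$, $P\in A^{3}\setminus A^{4}$, $Q\in A^{2}\setminus A^{3}$, $R\in A\setminus A^{2}$, and such that every $*$-product of two of $P,Q,R,S$ is given by the Multiplicative Table for some $i,k,y\in\mathbb F_{p}$ with $y\neq 0$.

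The second step is to check that $A$ itself meets the hypotheses of Theorem \ref{100} for this triple $(i,k,y)$. By Proposition \ref{7} we have $A^{4}\neq 0$, so Corollary \ref{5} applies and the chain $A\supsetneq A^{2}\supsetneq A^{3}\supsetneq A^{4}\supsetneq A^{5}=0$ has all quotients one-dimensional over $\mathbb F_{p}$; since $R,Q,P,S$ lie in $A\setminus A^{2}$, $A^{2}\setminus A^{3}$, $A^{3}\setminus A^{4}$ and $A^{4}\setminus\{0\}$ respectively, they form an $\mathbb F_{p}$-basis of $A$. Hence $A=\mathbb F_{p}R+\mathbb F_{p}Q+\mathbb F_{p}P+\mathbb F_{p}S$ as a linear space and $P,Q,R,S$ satisfy the Multiplicative Table for this $(i,k,y)$.

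The third step produces the comparison object. Apply Theorem \ref{main} with the same $i,k,y$ to get a brace $A_{0}$; by Theorem \ref{9} its distinguished generators $P,Q,R,S$ satisfy the Multiplicative Table for the same $(i,k,y)$, and since $f$ is a bijection onto $B=\mathbb F_{p}P'+\mathbb F_{p}Q'+\mathbb F_{p}R'+\mathbb F_{p}S'$ sending these generators to the basis $P',Q',R',S'$ of $B$, the set $\{P,Q,R,S\}$ is an $\mathbb F_{p}$-basis of $A_{0}$ as well. Thus $A$ and $A_{0}$ both satisfy the hypotheses of Theorem \ref{100} for the same triple $(i,k,y)$, so by item $3$ of that theorem each is the unique brace determined by this data. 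Concretely, the $\mathbb F_{p}$-linear bijection $\phi\colon A\to A_{0}$ fixing $P,Q,R,S$ is additive by construction and intertwines the two $\circ$-operations because, by the argument of Theorem \ref{100}(3), each $\lambda_{a}$ — and hence $*$ and $\circ$ — is computed identically in $A$ and in $A_{0}$ from the Multiplicative Table, via the unique factorisation $a=R^{\alpha}\circ Q^{\beta}\circ P^{\gamma}\circ S^{\xi}$ of Theorem \ref{100}(2) together with the rule $\lambda_{g\circ h}=\lambda_{g}\lambda_{h}$. Hence $\phi$ is a brace isomorphism, and $A_{0}$ is one of the braces constructed in Theorem \ref{main}.

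The step needing the most care is this last verification that $\phi$ is multiplicative: because $a*b$ is not linear in its first argument, one cannot extend the agreement on generators by bilinearity, and one must instead pass through the normal form $a=R^{\alpha}\circ Q^{\beta}\circ P^{\gamma}\circ S^{\xi}$ and the composition law for $\lambda$ maps — which is exactly what Theorem \ref{100}(3) already packages, so the remaining work is only to confirm that the hypotheses of Theorem \ref{100} hold on both sides, i.e.\ Steps 1--3. A secondary subtlety is that the generator $S$ delivered by Proposition \ref{listings} is the modified element $\bar S\in A^{4}$ coming from Proposition \ref{6}; it is Corollary \ref{5} that guarantees this adjusted $\{R,Q,P,S\}$ still spans $A$. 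No residual normalisation of $(i,k,y)$ is required, since the theorem only claims isomorphism to \emph{some} member of the family.
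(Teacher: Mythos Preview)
Your proposal is correct and follows essentially the same route as the paper: invoke Proposition \ref{listings} to extract the Multiplicative Table with parameters $(i,k,y)$, build the comparison brace via Theorem \ref{main}, use Theorem \ref{9} to see it satisfies the same table, and conclude by the uniqueness clause Theorem \ref{100}(3). Your additional care in verifying that $\{R,Q,P,S\}$ is an $\mathbb F_{p}$-basis of $A$ (via the filtration from Corollary \ref{5}) and in spelling out why $\phi$ is multiplicative makes explicit what the paper leaves to the reader or to the surrounding remarks.
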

	\begin{proof} 
		Let $(A, +, \circ)$ be an $\mathbb F_{p}$-brace which is not right nilpotent with multiplicative group $(A, \circ)$ being the group XV. Then $(A, +, \circ )$  satisfies the relations from the Multiplicative Table by Proposition \ref{listings} for some $i,k,y\in \mathbb F_{p}, y\neq 0$. 
		
		Let $B$ be the brace constructed in Theorem \ref{main} for the same $i,k,y$. Then $B$ satisfies the same Multiplicative Table as $A$, by Theorem \ref{9}. By Theorem \ref{100}, item 3, braces which satisfy the same Multiplicative Table coincide, so $A$ and $B$ are isomorphic braces.
	\end{proof}

	\section{Some Properties of The Constructed Braces}
	Recall that a brace is called prime if product of any two non-zero ideals in this brace is non-zero.
	\begin{proposition} 
		Let $p>3$ be a prime number. Let $A$ be a brace constructed in Theorem \ref{main} for fixed $y, i, k\in \mathbb F_{p}$ with $y\neq 0$. Then  $A$ is a prime brace. Moreover, $A$ contains a non-zero strongly nilpotent (i.e.,  both right and left nilpotent) ideal $A^{2}$.
	\end{proposition}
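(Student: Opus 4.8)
The plan is to establish two things separately: (i) $A$ is prime, and (ii) $A^{2}$ is a non-zero strongly nilpotent ideal. For the second claim, note first that $A^{2}$ is non-zero and in fact has cardinality $p^{3}$ (Proposition \ref{7}), and it is an ideal of $A$ by general brace theory. To see it is both left and right nilpotent, I would use the Multiplicative Table directly: since $P,Q,S\in A^{2}$ and $R\notin A^{2}$, we have $A^{2}=\mathbb F_{p}Q+\mathbb F_{p}P+\mathbb F_{p}S$. Using the table, every product $x*z$ with $x,z\in\{P,Q,S\}$ lands in $\mathbb F_{p}S$ or is zero, so $(A^{2})^{(2)}=A^{2}*A^{2}\subseteq \mathbb F_{p}S$, and then $\mathbb F_{p}S * A^{2}=0=A^{2}*\mathbb F_{p}S$ because $S*P=S*S=0$, $S*Q=-P$ but $P*A^{2}=0$ wait — more carefully, one computes $A^{2}*A^{2}$ and then one more step of the chain kills everything; the point is that $A^{2}$ has a short enough radical chain. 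Alternatively, and more cleanly, one invokes Corollaries \ref{3} and \ref{4}: $Q*A^{i}\subseteq A^{i+2}$ and $P*A^{i}\subseteq A^{i+3}$, together with $S\in A^{4}$ and $A*A^{4}=0$, which force the chains $(A^{2})^{[n]}$ to terminate. I expect this to be routine.

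The main content is showing $A$ is prime, i.e. the product $I*J$ of any two non-zero ideals is non-zero. The key structural fact is that $\mathbb F_{p}S$ is contained in every non-zero ideal of $A$: indeed, $A^{4}=\mathbb F_{p}S$ is the unique minimal ideal, because any non-zero ideal $I$ is a non-zero additive subgroup stable under all $\lambda_{a}$ and under $a*{-}$, and from the Multiplicative Table one checks that the orbit of any non-zero element under these operations reaches $S$. Concretely: if $I$ contains an element with non-zero $R$-component, then $R*R=iP+kS$-type products and $S*R=-Q+P-2^{-1}yS$ generate more; if $I$ contains a non-zero element of $A^{2}$, then since $(S*Q)*(-y^{-1}R)=S$ (the identity used in Theorem \ref{100}(5)) and $Q*Q=-yS$ with $y\neq 0$, one extracts $S$. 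So I would prove the lemma "$0\neq I\trianglelefteq A\implies \mathbb F_{p}S\subseteq I$" by a short case analysis on which coordinate of a chosen non-zero element of $I$ is non-zero, pushing down the radical-type filtration $A\supset A^{2}\supset A^{3}\supset A^{4}=\mathbb F_{p}S$.

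Granting that lemma, primeness follows quickly: given non-zero ideals $I,J$, we need $I*J\neq 0$. Since $\mathbb F_{p}S\subseteq J$, it suffices to show $I*S\neq 0$ — but $S\in A^{4}$ gives $A*S=0$, so that approach fails, and instead I use $\mathbb F_{p}S\subseteq I$ and show $S*J\neq 0$. Since $J$ is non-zero it contains some element $b\notin A^{4}$ or, if $J\subseteq A^{4}=\mathbb F_{p}S$, then $J=\mathbb F_{p}S$ and one needs $I*S$ or $S*I\neq 0$ — here the ideal $I$, being non-zero, contains $\mathbb F_{p}S$ but we need it to contain something outside $A^{4}$ on which $S$ acts non-trivially; so in fact the cleanest route is: both $I$ and $J$ contain $\mathbb F_{p}S$, and additionally, by the lemma's proof, a non-zero ideal that is not equal to $\mathbb F_{p}S$ contains an element of $A^{2}\setminus A^{3}$ or of $A\setminus A^{2}$. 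The hard part, and the step I would spend the most care on, is ruling out the degenerate possibility $I=J=\mathbb F_{p}S$ having zero product with itself: indeed $\mathbb F_{p}S * \mathbb F_{p}S = 0$ since $S*S=0$. This means $\mathbb F_{p}S$ is an ideal with $(\mathbb F_{p}S)^{2}=0$, so $A$ would NOT be prime if $\mathbb F_{p}S$ could be both factors. Hence the lemma must be strengthened: the unique minimal ideal cannot be $\mathbb F_{p}S$ alone — rather, one must check that $\mathbb F_{p}S$ is \emph{not} an ideal, or that every ideal strictly larger interacts non-trivially. I would resolve this by showing $\mathbb F_{p}S$ is not an ideal of $A$: although $A*S=0$, for $\mathbb F_{p}S$ to be an ideal we also need $\lambda_{a}(\mathbb F_{p}S)\subseteq \mathbb F_{p}S$ and $S*A\subseteq \mathbb F_{p}S$, and $S*R=-Q+P-2^{-1}yS\notin\mathbb F_{p}S$ since $y\neq 0$ forces the $Q$-component non-zero — so $\mathbb F_{p}S$ is \textbf{not} an ideal. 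Therefore every non-zero ideal $I$ properly contains $\mathbb F_{p}S$, hence contains an element outside $A^{4}$; a final coordinate chase using the Multiplicative Table (e.g. $S*R$ or $Q*R=2^{-1}yS$ or $R*R$) shows $I*J\supseteq$ something non-zero, in fact $\mathbb F_{p}S\subseteq I*J$, completing the proof. I would organize the write-up as: Lemma A ($\mathbb F_{p}S$ is not an ideal; the minimal ideal is two-dimensional, namely $A^{3}=\mathbb F_{p}P+\mathbb F_{p}S$, or determine it precisely), Lemma B (every non-zero ideal contains $A^{3}$), then primeness, then the strong nilpotence of $A^{2}$ as above.
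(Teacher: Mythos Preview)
Your strategy for primeness is the paper's: show every non-zero ideal contains a fixed ideal $J$ with $J*J\neq 0$. But the paper takes $J=A^{2}$, not $A^{3}$, and your own observation already shows why $A^{3}=\mathbb F_{p}P+\mathbb F_{p}S$ cannot be the minimal ideal: $S*R=-Q+P-2^{-1}yS$ has a $Q$-component, so it lies outside $A^{3}$ just as it lies outside $\mathbb F_{p}S$. Hence any ideal containing $S$ must also contain $Q$ (and $P$, via $S*Q=-P$), so it contains all of $A^{2}=\mathbb F_{p}Q+\mathbb F_{p}P+\mathbb F_{p}S$. The paper's argument is then one line: every non-zero ideal contains $A^{2}$, and $A^{2}*A^{2}\neq 0$ since $Q*Q=-yS\neq 0$. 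Had you stopped at $A^{3}$ you would indeed be stuck, because $A^{3}*A^{3}=0$ (all products among $P,S$ vanish).

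For strong nilpotence of $A^{2}$ your hesitation is well-placed. The paper asserts $(A^{2}*A^{2})*A^{2}=0$, but in fact $A^{2}*A^{2}=A^{3}$ and $S\in A^{3}$ gives $S*Q=-P\neq 0$, so $(A^{2}*A^{2})*A^{2}=\mathbb F_{p}P$. One further step suffices: $P*Q=P*P=P*S=0$ forces $(A^{2})^{(4)}=0$, so $A^{2}$ is right nilpotent. The left-nilpotence claim $A^{2}*(A^{2}*A^{2})=0$ is correct as stated, since $A^{2}*A^{2}\subseteq A^{3}$ and every $a\in A^{2}$ is a $\circ$-word in $P,Q,S$, each of whose $\lambda$-maps fixes both $P$ and $S$.
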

	\begin{proof} 
		A non-zero ideal $I$ in $A$ contains a non-zero element $c=\alpha_{1}R+\alpha_{2}Q+\alpha_{3}P+\alpha_{4}S$ for some $\alpha_{1}, \alpha_{2}, \alpha_{3}, \alpha_{4}\in \mathbb F_{p}$. If $c\notin A^{4}$, then when multiplying $c$ from the left several times by either $R$ or $Q$ we obtain some non-zero element in $A^{4}$, hence $I$ contains $S\in A^{4}$. Since every ideal containing $S$ contains $Q, P$ we get that $A^{2}\subseteq I$  (since $S*Q=-P$, $S*R=-Q+P-2^{-1}yS$). Notice that $A^{2}*A^{2}\neq 0$ since $Q*Q\neq 0$, so $A$ is a prime brace, as every product of non-zero  ideals in $A$  is non-zero. Moreover, $(A^{2}*A^{2})*A^{2}=A^{2}*(A^{2}*A^{2})=0$, so $A^{2}$ is a strongly nilpotent ideal in $A$.
	\end{proof}
	
	\begin{example}
		Consider a vector space $V$ spanned by the elements $P, Q, R, S$ over the field $\mathbb{F}_p$ with $p$ elements, with multiplication that satisfies \[(a+b)c=ac+bc\ \text{and} \ a(b+c)=ab+ac\] for all $a,b,c$ in $V$. Furthermore, the following relations hold in $V$.
		\begin{align*}
			&PS=SS=QS=RS=PP=SP=QP=PQ=RQ=QR=0,\\
			&RR=jP+kS,\ QQ=-yS,\ RP=yS,\ SQ=-P,\ PR=yS,\ SR=-Q,
		\end{align*}
		where $j,k$, and non-zero $y$ are arbitrary elements from $\mathbb{F}_{p}$. Now, it is easily checked that this defines a pre-Lie algebra by verifying that \[(AB)C-A(BC)=(BA)C-B(AC)\]
		is satisfied for any $A, B,C\in\{P,Q,R,S\}$. Hence $V$ is an example of a pre-Lie algebra which is left nilpotent and not right nilpotent.
		
		The relations in $V$ were obtained using the formula found in \cite{Lazard} for obtaining a pre-Lie algebra from a strongly nilpotent $\mathbb{F}_p$-brace to the brace defined in Theorem \ref{main}, and some relations were guessed as to avoid large calculations.
	\end{example}
	
	\section{Braces whose Multiplicative Group is XIV}\label{S8}
	In this section we will show that all $\mathbb{F}_{p}$-braces whose multiplicative group is XIV are right nilpotent and strongly nilpotent provided that $p$ is a prime number larger than $3$. Recall that $\mathbb F_{p}$ is the field of cardinality $p$.
	\begin{lemma}\label{lem1}
		Let $A$ be an $\mathbb F_{p}$-brace of cardinality $p^{4}$, where $p>3$ is a prime number. Let $i$ be such that $A^{i+1}=0$ and suppose there is $0\neq c\in A^{i}$ such that $c\circ a=a\circ c$ for all $a\in A$. Then $A$ is a right nilpotent brace. 
	\end{lemma}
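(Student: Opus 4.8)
The plan is to exploit the hypothesis that $c$ is central in $(A,\circ)$ and lies in $A^{i}$ with $A^{i+1}=0$, in order to show that $\mathbb F_{p}c$ is an ideal of $A$ on which $A$ acts trivially from both sides, and then to pass to the quotient $A/\mathbb F_{p}c$. The key observation is the standard identity relating centrality in the multiplicative group to the $*$-operation: since $c\circ a=a\circ c$ for all $a\in A$, we have $c*a+c+a=a*c+a+c$, hence $c*a=a*c$ for every $a\in A$. Combined with $c\in A^{i}$, this gives $A*c=c*A\subseteq A*A^{i}=A^{i+1}=0$; in particular $c*c=0$ and $c*(\mathbb F_{p}c)=(\mathbb F_{p}c)*c=0$.

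Next I would check that $I:=\mathbb F_{p}c$ is an ideal of the brace $A$. It is clearly an additive subgroup (and an $\mathbb F_{p}$-subspace). For the ideal condition one needs $A*I\subseteq I$, $I*A\subseteq I$, and closure under the $\lambda$-action, i.e. $\lambda_{a}(\alpha c)=\alpha c+a*(\alpha c)\in I$ for all $a\in A$, $\alpha\in\mathbb F_{p}$; but $a*(\alpha c)=\alpha(a*c)=0$ since $A$ is an $\mathbb F_{p}$-brace and $A*c=0$, so $\lambda_{a}(\alpha c)=\alpha c\in I$, and similarly $I*A=c*A\cdot\mathbb F_{p}=0\subseteq I$. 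Hence $I$ is an ideal with $A*I=I*A=0$.

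Now $A/I$ is a brace of cardinality $p^{3}$ (note $c\neq 0$ so $|I|=p$), and by Bachiller's result (Theorem~\ref{1}, together with the fact quoted from \cite{DB} that braces of order $p^{3}$ are right nilpotent) the quotient $A/I$ is right nilpotent: $(A/I)^{(m)}=0$ for some $m$. This means $A^{(m)}\subseteq I$. Then $A^{(m+1)}=A^{(m)}*A\subseteq I*A=0$, so $A$ is right nilpotent. The only real point requiring care is the citation that braces of cardinality $p^{3}$ are right nilpotent, which is invoked earlier in the excerpt (in the proof of Proposition~\ref{7}) and may be cited here as well; everything else is a short direct computation. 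I expect no serious obstacle — the argument is essentially the "central element in the last nonzero term of the left series kills the right series after one more step" trick already used repeatedly in Section~4.
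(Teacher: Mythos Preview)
Your proposal is correct and follows essentially the same route as the paper: you show $c*a=a*c$ from centrality, deduce $A*c=c*A\subseteq A*A^{i}=A^{i+1}=0$, conclude that $I=\mathbb F_{p}c$ is an ideal with $A*I=I*A=0$, and then pass to the quotient $A/I$ of cardinality $p^{3}$, which is right nilpotent by Bachiller. The only cosmetic difference is that the paper first remarks $c^{j}=jc$ (since $c*c=0$) so that the multiplicative and additive descriptions of $I$ coincide and each $c^{j}$ is central, whereas you verify the ideal conditions directly via the $\mathbb F_{p}$-brace axiom; also, the relevant citation is the right-nilpotency result for braces of order $\leq p^{3}$ from \cite{Ba,DB}, not Theorem~\ref{1}.
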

	\begin{proof}
		Observe that $c*c=0$ hence $c^{j}=jc$ for $j=1,2,\ldots, p $. Denote  $\mathbb F_{p}c=\{c^{j}: j=1,2, \ldots, p\}$ by $I$. Then $I$ is an ideal in brace $A$, and $I*A=A*I=0$, since $I=\{c^{j}: j=1,2, \ldots , p\}$ and $c^{j}$ is a central element in $A$ for every $j$ (recall that  $c^{j}=c\circ \cdots \circ c$). Notice that $A/I$ has cardinality not exceeding $p^{3}$, hence $A/I$ is a right nilpotent brace by \cite{Ba}. Now $A*I=0$ yields that $A$ is a right nilpotent brace.
	\end{proof} 

	\begin{lemma}\label{lem2}
		Let $A$ be an $\mathbb F_{p}$-brace of cardinality $p^{4}$, where $p>3$ is a prime number. Suppose that the multiplicative group $(A, \circ )$ is the group XIV. Denote by $W=\{P^{i}\circ Q^{j}: i,j=1,2, \ldots ,p\}$. Suppose that $A$ is not a right nilpotent brace. Then the following holds.
		\begin{enumerate}
			\item $A^{3}\neq 0$.
			\item Suppose that $Q\in A^{2}$. If $A^{4}=0$ and $A^{3}\neq 0$, then the cardinality of $A^{2}$ is $p^{3}$ and $A^{2}=W+A^{3}$. Moreover $A^{3}*A^{2}=0$.
			\item Suppose that $Q\in A^{2}$. If $A^{5}=0$ and $A^{4}\neq 0$, then the cardinality of $A^{2}$ is $p^{3}$ and $A^{2}=W+A^{4}$. Moreover, $A^{4}*A^{2}=0$.
		\end{enumerate}
	\end{lemma}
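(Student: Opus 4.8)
The whole lemma is an exercise in applying Lemma~\ref{lem1}, whose contrapositive I would invoke repeatedly: since $A$ is not right nilpotent, no radical layer $A^i$ with $A^{i+1}=0$ can contain a non-zero element that is central in $(A,\circ)$. Two facts about the group XIV feed this. First, $P\in A^2$ unconditionally: $A^2=A*A$ is an ideal, so $(A/A^2,\circ)$ is a homomorphic image of $(A,\circ)=\mathrm{XIV}$ and is the trivial brace, hence abelian; the relation $R\circ S=S\circ R\circ P$ shows $P=R\circ S\circ R^{-1}\circ S^{-1}$ lies in the commutator subgroup of $\mathrm{XIV}$, so its image in $A/A^2$ is trivial. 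Second, the centre of $\mathrm{XIV}$ has order $p^2$ and equals $\langle P,Q\rangle_\circ$, which is exactly $W$; hence every element of $W$ is central in $(A,\circ)$. Part~1 is then immediate: if $A^3=0$ then $0\ne P\in A^2$ is central, so Lemma~\ref{lem1} with $i=2$ forces $A$ to be right nilpotent, a contradiction; thus $A^3\ne 0$.

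For Parts~2 and~3 I would argue uniformly, letting $A^i$ denote the last non-zero layer, so $i=3$ with $A^4=0$ in Part~2 and $i=4$ with $A^5=0$ in Part~3; in both cases $A^i\ne 0=A^{i+1}$. First, $|A^2|=p^3$: since $P\in A^2$ always and $Q\in A^2$ by hypothesis, $W=\langle P,Q\rangle_\circ\subseteq A^2$, so $|A^2|\ge p^2$; also $|A^2|\ne p^4$, for otherwise $A=A^2=A^3=\cdots$ contradicts $A^5=0$; and $|A^2|=p^2$ would give $A^2=W$, which is central, so the non-zero $A^i\subseteq A^2$ would be a central layer with $A^{i+1}=0$, contradicting Lemma~\ref{lem1}. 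Hence $|A^2|=p^3$. Next, $A^2=W+A^i$: if $N$ is the additive subgroup of $(A,+)$ generated by $W$ and $A^i$, then $W\subseteq N\subseteq A^2$, so $|N|\in\{p^2,p^3\}$; if $|N|=p^2$ then $N=W$, whence $A^i\subseteq W$ is again a central layer with $A^{i+1}=0$, contradicting Lemma~\ref{lem1}; so $N=A^2$. Finally, $A^i*A^2=0$: for $z\in A^i$ and $w\in W$, centrality of $w$ gives $z*w=w*z\in W*A^i\subseteq A*A^i=A^{i+1}=0$, and also $A^i*A^i\subseteq A*A^i=A^{i+1}=0$; since $A^2=N$ is additively generated by $W\cup A^i$ and $z*(x+y)=z*x+z*y$, we get $z*A^2=0$ for all $z\in A^i$, i.e.\ $A^i*A^2=0$.

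I expect the only real care to be needed in the additive bookkeeping: verifying that $W$ (which is a subgroup of $(A,\circ)$ but not a priori closed under $+$) genuinely sits inside $A^2$ with the stated orders, that the quotient $A/A^2$ and the layers $A^j$ are the subspaces they should be, and that the ``swap'' $z*w=w*z$ for central $w$ is being used legitimately. Conceptually there is no obstacle: every non-routine step is a direct appeal to Lemma~\ref{lem1} with the central element $P$, or with a non-zero element of the bottom layer.
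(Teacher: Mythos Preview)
Your proof is correct and follows essentially the same strategy as the paper's: both repeatedly invoke Lemma~\ref{lem1} with central elements drawn from $W$, first with $P\in A^2$ to get Part~1, then with a hypothetical central element in the bottom layer to force $|A^2|=p^3$ and $A^2=W+A^i$, and finally use the swap $z*w=w*z$ for central $w$ to obtain $A^i*A^2=0$. The only cosmetic differences are that the paper cites Lemma~\ref{2} for $P\in A^2$ (you use the abelianisation $A/A^2$), and the paper argues via the explicit intersection $W\cap A^i=\{0\}$ and the $\circ$-product set $A^i\circ W$ to count, whereas you do an additive case split on $|A^2|$ and $|N|$; both routes are equivalent.
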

	
	\begin{proof} 
		\begin{enumerate}
			\item follows from Lemma \ref{lem1}, since $P=R^{-1}\circ S^{-1}\circ R\circ S\in A^{2}$ by Lemma \ref{2} and $P$ is a central element in $(A, \circ )$. 
			\item Notice that since $Q\in A^{2}$, then $W\subseteq A^{2}$. By Lemma \ref{lem1} we have $W\cap A^{3}=0$, since $W$ consists of central elements. Therefore, $W+A^{3}, A^{3}\circ W\subseteq A^{2}$, and since $A^{2}\neq A$ and $W+A^{3}$ has cardinality $p^{3}$, it follows that $A^{2}$ has cardinality $p^{3}$ and $A^{2}=W+A^{3}= A^{3}\circ W$.
			
			Observe now that \[A^{3}*A^{2}=A^{3}*(W+A^{3})=A^{3}*W+A^{3}*A^{3}=W*A^{3}\subseteq A^{4}=0. \]
			\item Notice that since $Q\in A^{2}$, then $W\subseteq A^{2}$. By Lemma \ref{lem1} we have $W\cap A^{4}=0$, since $W$ consists of central elements. Therefore, $W+A^{4}, A^{4}\circ W\subseteq A^{2}$, and since $A^{2}\neq A$ and $W+A^{4}$ has cardinality $p^{3}$, it follows that $A^{2}$ has cardinality $p^{3}$ and $A^{2}=W+A^{4}=A^{4}\circ W$.
			
			Observe now that \[A^{4}*A^{2}=A^{4}*(W+A^{4})=A^{4}*W+A^{4}*A^{4}=W*A^{4}\subseteq A^{5}=0. \]
		\end{enumerate}
	\end{proof}
	
	\begin{lemma}\label{lem3} 
		Let $A$ be an $\mathbb F_{p}$-brace of cardinality $p^{4}$, where $p>3$ is a prime number. Suppose that the multiplicative group $(A, \circ )$ is the group XIV.  Suppose that $A$ is not a right nilpotent brace. Suppose that $Q\in A^{2}$ and $R\notin A^{2}$. Then $P*R=R*P=0$.
	\end{lemma}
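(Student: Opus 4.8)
The plan is to pin down the additive structure of $A$ using Lemma \ref{lem2}, then to show that $P\ast R$ is a central element of the last nonzero term of the left series, and finally to apply Lemma \ref{lem1}.

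\textbf{Setup.} Since $A$ is not right nilpotent and $Q\in A^{2}$, Lemma \ref{lem2} applies: part~(1) gives $A^{3}\neq 0$, and — because a brace of cardinality $p^{4}$ satisfies $A^{5}=0$ — exactly one of the situations of parts~(2) and~(3) occurs, so in either case $|A^{2}|=p^{3}$. Hence $A/A^{2}$ is one-dimensional over $\mathbb F_{p}$ and, as $R\notin A^{2}$, we may write $A=\mathbb F_{p}R+A^{2}$. Since $P$ is central in group XIV we have $P\ast a=a\ast P$ for all $a\in A$; in particular $P\ast R=R\ast P$, so it suffices to prove $P\ast R=0$. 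Using $P=R^{-1}\circ S^{-1}\circ R\circ S$ together with Lemma \ref{2} (and $A^{5}=0$) one gets $P\ast c\in A^{3}$ for $c\in A$ and $P\ast c\in A^{4}$ for $c\in A^{2}$; in particular $P\ast R\in A^{3}$ and $A\ast(P\ast R)\subseteq A^{4}$. Finally, one may replace the generator $S$ by $S\circ R^{-\sigma}$ for a suitable $\sigma\in\mathbb F_{p}$ so that $S\in A^{2}$: this leaves the relations of group XIV intact (the relation $R\circ S=S\circ R\circ P$ is equivalent to $R^{-1}\circ S^{-1}\circ R\circ S=P$, which is unchanged, since $P$ and $Q$ are central) and does not affect the hypotheses, which mention only $R$ and $Q$.

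\textbf{The argument.} Next I would feed the non-abelian relation $R\circ S=S\circ R\circ P$, together with the relations $(R\circ S)\ast c=(S\circ R\circ P)\ast c$ for $c\in\{R,S\}$, into the identities $g\circ h=g\ast h+g+h$ and $(g\circ h)\ast c=g\ast c+h\ast c+g\ast(h\ast c)$. With $S,P,Q\in A^{2}$ and $R\notin A^{2}$, and using the degree bounds above together with $A^{3}\ast A^{2}=0$ (Lemma \ref{lem2}(2); resp.\ $A^{4}\ast A^{2}=0$ from part~(3)) to discard the products that land in the vanishing term of the series, these collapse to an identity of the shape $P\ast R\equiv R\ast(S\ast R)-S\ast(R\ast R)$ modulo the last vanishing term. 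Since the cross product $\bigl(R\ast(S\ast R)\bigr)\ast\bigl(S\ast(R\ast R)\bigr)$ lies in $A^{3}\ast A^{2}=0$, one has $\bigl(R\ast(S\ast R)\bigr)\circ\bigl(-(S\ast(R\ast R))\bigr)=P\ast R$, which lets one transport first-argument sums through $\circ$ and thereby compute $(P\ast R)\ast a$ for every $a\in A$: it is $0$ for $a\in A^{2}$ by $A^{i}\ast A^{2}=0$, and the relation $R\circ S=S\circ R\circ P$ handles $a=R$. Thus $P\ast R$ lies in the last nonzero term $A^{i}$ of the left series ($i=3$ if $A^{4}=0$, otherwise $i=4$, the refinement to $A^{4}$ using $|A^{4}|=p$, which follows from $|A^{2}|=p^{3}$, $|W|=p^{2}$ and $A^{4}\subsetneq A^{3}\subsetneq A^{2}$) and commutes with all of $A$ under $\circ$ (indeed $a\circ(P\ast R)=a+(P\ast R)=(P\ast R)\circ a$). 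If $P\ast R\neq 0$, Lemma \ref{lem1} then forces $A$ to be right nilpotent, contradicting the hypothesis; hence $P\ast R=0$, and therefore $R\ast P=P\ast R=0$.

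\textbf{Main obstacle.} I expect the real work to be concentrated in the second step: because $\ast$ is not additive in its first argument, every manipulation must be routed through the group relations and the bracket identity $(g\circ h)\ast c=g\ast c+h\ast c+g\ast(h\ast c)$, and one must keep careful track of which of the iterated products $R\ast(S\ast R)$, $S\ast(R\ast R)$, $S\ast\bigl(R\ast(S\ast R)\bigr)$, $\dots$ actually vanish. The single most delicate point is the verification that $(P\ast R)\ast R=0$ (equivalently, that $P\ast R$ commutes with $R$ under $\circ$), which is exactly where the non-abelian relation $R\circ S=S\circ R\circ P$ is genuinely used; and the subcase $A^{4}\neq 0$ is a little more involved because there one must first upgrade $P\ast R\in A^{3}$ to $P\ast R\in A^{4}$ before Lemma \ref{lem1} can be invoked with $i=4$.
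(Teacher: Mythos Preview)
Your overall strategy---show that $P\ast R$ lies in the last nonzero $A^{i}$ and is $\circ$-central, then invoke Lemma~\ref{lem1}---is coherent, but the execution has a real gap at exactly the point you flag as ``most delicate''. With your normalization $S\in A^{2}$ you can indeed obtain $P\ast R=R\ast(S\ast R)-S\ast(R\ast R)$ (when $A^{4}=0$), and then $(P\ast R)\ast a=0$ for $a\in A^{2}$ follows from $A^{3}\ast A^{2}=0$. But for $a=R$ you need $(P\ast R)\ast R=0$, and nothing in your outline produces this: the $\circ$-factorisation $P\ast R=(R\ast(S\ast R))\circ(-(S\ast(R\ast R)))$ only reduces the problem to controlling $(R\ast(S\ast R))\ast R$ and $(-(S\ast(R\ast R)))\ast R$, which are elements of $A^{3}\ast A$ with no a~priori bound. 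The same difficulty obstructs the case $A^{4}\neq 0$, where you also never explain how to upgrade $P\ast R\in A^{3}$ to $P\ast R\in A^{4}$ (noting that $|A^{4}|=p$ is a cardinality fact, not a containment).

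The paper sidesteps all of this by pushing $S$ one step further. Since $A^{2}=A^{i}\circ W$ with $A^{i+1}=0$ (Lemma~\ref{lem2}), one can choose $S'=S\circ R^{t}\circ w^{-1}\in A^{i}$ with $w\in W$ central, so that simultaneously $A\ast S'=0$ and $S'\ast A^{2}=0$. The relation $R\circ S'=S'\circ R\circ P$ then gives $S'\ast R=-(R\ast P+P)$, and feeding this into $(R\circ S')\ast R=(S'\circ R\circ P)\ast R$ yields, after the obvious cancellations, $R\ast(R\ast P)+R\ast P=0$, i.e.\ $\lambda_{R}(R\ast P)=0$. Since $\lambda_{R}$ is bijective, $R\ast P=0$ directly---no appeal to Lemma~\ref{lem1} is needed, and the two subcases $A^{4}=0$ and $A^{4}\neq 0$ are handled uniformly. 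The moral is that your normalization $S\in A^{2}$ is too weak; pushing $S'$ into the \emph{last} nonzero term is what makes the computation close.
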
 
	\begin{proof}
		Notice that $A/A^{2}$ has cardinality $p$, by Lemma \ref{lem2}. Therefore, $S\circ R^{i}\in A^{2}$ for some $i$, since $R\notin A^{2}$. Let $W$ be as in Lemma \ref{lem2}. By Lemma \ref{lem2} we have $S\circ R^{i}=c\circ w$ for some $w\in W$ and some $c\in A^{i}$ where $A^{i+1}=0$. Therefore $S'=S\circ R^{i}\circ w^{-1}\in A^{i}$ (where $A^{i+1}=0$). Recall that $w$ is a central element, so  $R\circ S'=S'\circ R\circ P$. This implies that \[0=S'*R+S'*P+S'*(R*P)+R*P+P.\] Notice that $S'*A^{2}=0$ by Lemma \ref{lem2}, so $S'*R+R*P+P=0$.
		
		Observe now that $(R\circ S')*R=(S'\circ R\circ P)*R$. Therefore, $R*(S'*R)=R*(P*R)+P*R$, since $S'*A^{2}=0$. Substituting $S'*R=-(R*P+P)$ in the left hand side we get $R*(R*P)+R*P=0$, so $\lambda _{R}(R*P)=0$, hence $R*P=0$, as the  map $\lambda _{a}(b)=a*b+b$ is an invertible map in any brace.
	\end{proof}
	
	\begin{lemma}\label{lem4}  
		Let $A$ be an $\mathbb F_{p}$-brace of cardinality $p^{4}$, where $p>3$ is a prime number. Suppose that the multiplicative group $(A, \circ )$ is the group XIV. Suppose that $A$ is not right nilpotent. Suppose that $Q\in A^{2}$ and $R\in A^{2}$. Then $S\notin A^{2}$ and $P*S=S*P=0$.
	\end{lemma}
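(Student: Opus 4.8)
The plan is to settle $S\notin A^{2}$ first and then prove $P\ast S=0$. For the first part: $P=R^{-1}\circ S^{-1}\circ R\circ S$ is a $\circ$-commutator, hence $P\in A^{2}$ (as in the proof of Lemma \ref{lem2}, using Lemma \ref{2}), and $Q,R\in A^{2}$ by hypothesis; so if $S\in A^{2}$ as well, then the ideal $A^{2}$, being a subgroup of $(A,\circ)$, would contain all four generators of group XIV and hence equal $A$. But then $A=A^{2}=A\ast A^{2}=A^{3}=\dots$, forcing $A=A^{5}=0$, a contradiction. Also $P$ is central in $(A,\circ)$, so $P\circ S=S\circ P$ gives at once $P\ast S=S\ast P$, and it remains to show this common element is $0$.

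For $P\ast S=0$: by Lemma \ref{lem2}(1) we have $A^{3}\neq0$; set $m=3$ if $A^{4}=0$ and $m=4$ otherwise, so that $A^{m}\neq0=A^{m+1}$ (recall $A^{5}=0$). By Lemma \ref{lem2}(2)--(3) we have $A^{m}\ast A^{2}=0$ and every element of $A^{2}$ — in particular $R$ — can be written $c\circ w$ with $c\in A^{m}$ and $w$ in the central subgroup $W$. Cancelling the central factor $w$ from the group relation $R\circ S=S\circ R\circ P$ leaves
\[ c\circ S=S\circ c\circ P. \]
Expanding via $g\circ h=g+h+g\ast h$ and the identity $(g\circ h)\ast a=g\ast(h\ast a)+g\ast a+h\ast a$, and using $S\ast c\in A\ast A^{m}=A^{m+1}=0$ and $c\ast P\in A^{m}\ast A^{2}=0$, this collapses to
\[ c\ast S=P+S\ast P=P+P\ast S. \]

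I would then use the same group relation once more, now in the form $(c\circ S)\ast S=(S\circ c\circ P)\ast S$. Expanding both sides with the identity above and using $c\ast(S\ast S)\in A^{m}\ast A^{2}=0$ together with $c\ast(P\ast S)\in A^{m}\ast A^{3}\subseteq A^{m}\ast A^{2}=0$ (here $P\ast S\in P\ast A\subseteq A^{3}$ by Lemma \ref{2}), all cross terms vanish and one is left with
\[ 0=S\ast(P\ast S)+S\ast(c\ast S)+P\ast S. \]
Substituting $c\ast S=P+P\ast S$ and using that $\ast$ is additive in its second variable, $S\ast(c\ast S)=S\ast P+S\ast(P\ast S)=P\ast S+S\ast(P\ast S)$, so this becomes $0=2\bigl(S\ast(P\ast S)+P\ast S\bigr)=2\,\lambda_{S}(P\ast S)$. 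Since $p>2$ and $\lambda_{S}$ is an automorphism of $(A,+)$, we conclude $P\ast S=0$, and hence $S\ast P=P\ast S=0$.

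The step I expect to be the main obstacle is realising that the single defining relation $R\circ S=S\circ R\circ P$ of group XIV should be exploited twice — once after stripping off the central factor $w$, so that $c$ lies in the top non-zero radical power $A^{m}$ (which is exactly what makes $S\ast c$ and $c\ast P$ vanish), and once after multiplying by $S$ on the right — and that throughout one must expand via $(g\circ h)\ast a=g\ast(h\ast a)+g\ast a+h\ast a$, since $\ast$ is not left-distributive over $+$. The containments $A^{m}\ast A^{2}=0$ (Lemma \ref{lem2}) and $P\ast A\subseteq A^{3}$ (Lemma \ref{2}) are precisely what is needed for the clutter in these expansions to disappear.
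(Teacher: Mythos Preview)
Your proof is correct and follows essentially the same route as the paper's: write $R=c\circ w$ with $c\in A^{m}$ and $w\in W$ central (the paper calls this element $R'$), strip the central factor from $R\circ S=S\circ R\circ P$ to get $c\ast S=P+S\ast P$, then apply $\ast\,S$ to the same relation and simplify to $2\lambda_{S}(S\ast P)=0$. The only cosmetic difference is that where you invoke Lemma~\ref{2} to place $P\ast S$ in $A^{3}$, the paper uses the simpler observation $P\ast S=S\ast P\in A\ast A^{2}=A^{3}$ directly; either way $c\ast(P\ast S)\in A^{m}\ast A^{2}=0$ and the argument goes through.
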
 
	\begin{proof}
		Notice that  $S\notin A^{2}$ as otherwise $(A, \circ )$ would be generated by elements from $A^{2}$ implying $A=A^{2}$, which is impossible as then \[A=A^{2}=A*A^{2}=A^{3}=A^{4}=A^{5}=0.\]
		
		Notice that $R\in A^{2}$ implies that $R=a\circ w$ for some $a\in A^{i}, w\in W$, where $A^{i+1}=0$, by Lemma \ref{lem2}. Therefore, $R'=R\circ w^{-1}\in A^{i}$. Notice that since $w$ is central in the group $(A, \circ )$, we get $R'\circ S=S\circ R'\circ P$. Consequently, $R'*S=S*P+P$ since $A*R'=0$ and $R'*P\subseteq R'*A^{2}=0$ by Lemma \ref{lem2}.
		
		Observe now that $(R'\circ S)*S=(S\circ R'\circ P)*S$, hence \[R'*(S*S)=S*(R'*S)+S*(R'*(P*S))+S*(P*S)+R'*(P*S)+P*S.\] Recall that $R'*A^{2}=0$ and $A*R'=0$ by Lemma \ref{lem2}; consequently, \[0=S*(R'*S)+P*S+S*(P*S).\] Substituting $R'*S=S*P+P$ in the right hand side we get \[0=S*(S*P+P)+P*S+S*(P*S),\] therefore $2\lambda _{S}(S*P)=0$, so $S*P=0$.
	\end{proof}

	\begin{proposition} 
		Let $A$ be an $\mathbb F_{p}$-brace of cardinality $p^{4}$, where $p>3$ is a prime number. Suppose that the multiplicative group $(A, \circ )$ is the group XIV. Suppose that $A$ is not a right nilpotent brace. Then $Q\notin A^{2}$.
	\end{proposition}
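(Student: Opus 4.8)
The plan is to prove the contrapositive: assuming $Q\in A^{2}$ (together with the standing hypotheses $p>3$, $|A|=p^{4}$, $(A,\circ)$ the group XIV), I would show that $A$ is right nilpotent, contradicting the hypothesis. Since $(A,\circ)$ is XIV, both $Q$ and $P=R^{-1}\circ S^{-1}\circ R\circ S$ are central, and applying Lemma \ref{2} to the commutator expression for $P$ gives $P\in A^{2}$ and $P* A^{i}\subseteq A^{i+2}$; by Lemma \ref{lem1} the central element $P$ cannot lie in the last nonzero term of the left chain. By Lemma \ref{lem2} I obtain the structural data $A^{3}\neq 0$, $|A^{2}|=p^{3}$ (so $A/A^{2}$ is one dimensional over $\mathbb F_{p}$), $A^{2}=W+A^{m}$, $A^{m}*A^{2}=0$, $A*A^{m}=0$ and $|A^{m}|=p$, where $A^{m}$ is the last nonzero term of the chain ($m=3$ when $A^{4}=0$, and $m=4$ when $A^{4}\neq 0$).

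Next I would reduce everything to showing $P*g=0$ for every $g\in A$. Indeed, once this is known, centrality of $P$ gives $g*P=0$ as well, so $\mathbb F_{p}P$ is an ideal of $A$ (its $\lambda$-invariance and normality being immediate from $P*A=A*P=0$ and centrality of $P$) with $\mathbb F_{p}P*A=A*\mathbb F_{p}P=0$; since $A/\mathbb F_{p}P$ has order $p^{3}$ it is right nilpotent by \cite{Ba}, so $A^{(k)}\subseteq\mathbb F_{p}P$ for some $k$ and hence $A^{(k+1)}=A^{(k)}*A\subseteq\mathbb F_{p}P*A=0$, contradicting non right nilpotency. To prove $P*A=0$ I split on whether $R\in A^{2}$: since $P,Q\in A^{2}$ and $(A,\circ)$ is generated by $P,Q,R,S$, the quotient $A/A^{2}$ is generated by the image of $R$ or of $S$, so either $R\notin A^{2}$, in which case Lemma \ref{lem3} gives $P*R=0$, or $R\in A^{2}$, in which case Lemma \ref{lem4} gives $S\notin A^{2}$ and $P*S=0$. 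In both cases $P$ annihilates on the left a chosen generator $g_{0}\notin A^{2}$. Because $P*A^{i}\subseteq A^{i+2}$, the products $P*P$, $P*Q$ (and $P*R$ in the case $R\in A^{2}$) all lie in $A^{4}$, while $P*A^{m}\subseteq A*A^{m}=0$; hence if $P*P=P*Q=0$ (and $P*R=0$ when $R\in A^{2}$), a short induction shows $P*P^{i}=P*Q^{j}=0$ and then $P*(P^{i}\circ Q^{j})=0$, so $P*W=0$, $P*A^{2}=P*(W+A^{m})=0$, and finally $P*A=\mathbb F_{p}(P*g_{0})+P*A^{2}=0$, as required.

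It therefore remains to establish $P*P=P*Q=0$ (and $P*R=0$ if $R\in A^{2}$). If $A^{4}=0$ this is immediate from the previous paragraph. When $A^{4}\neq 0$, I would argue as in Proposition \ref{R}: write $A^{4}=\mathbb F_{p}a$ with $a*A^{2}=0$ and $A*a=0$, expand the group relation $R\circ S=S\circ R\circ P$ both as $(R\circ S)*c=(S\circ R\circ P)*c$ and as $c*(R\circ S)=c*(S\circ R\circ P)$ — which yields identities of the form $R*(S*c)=S*(R*c)+P*c+S*(P*c)+R*(P*c)+S*(R*(P*c))$ and its right-hand companion — and then evaluate at $c=P$, $c=Q$ and $c=g_{0}$, using $A*A^{4}=A^{4}*A^{2}=0$ to kill every composite term that lands in $A^{3}$ before a further $*$ is applied, together with Lemma \ref{lem3} or Lemma \ref{lem4}. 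This collapses each identity to an equality inside the one dimensional space $A^{4}$ with small integer coefficients; since $p>3$ these coefficients are invertible and force $P*P=P*Q=0$. In the subcase $R\in A^{2}$ one additionally uses that $(A^{2},\circ)$ is abelian (it has order $p^{3}$ and is generated by the pairwise commuting elements $P,Q,R$), so $(A^{2},+,*)$ is a commutative radical ring with $A^{2}*(A^{2}*A^{2})=0$, which both simplifies the computation and yields $P*R=0$. Assembling the three paragraphs gives the contradiction, hence $Q\notin A^{2}$.

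The main obstacle is precisely this last step in the case $A^{4}\neq 0$: one must control the multilinear expansion of the commutator relation $R\circ S=S\circ R\circ P$ and verify that the resulting linear equations in the one dimensional space $A^{4}$ admit no nonzero value for $P*P$ or $P*Q$. Everything else is organised bookkeeping with Lemmas \ref{lem1}--\ref{lem4} and the structure statement Lemma \ref{lem2}.
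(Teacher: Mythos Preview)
Your overall architecture matches the paper exactly: assume $Q\in A^{2}$, split on whether $R\in A^{2}$, invoke Lemma~\ref{lem3} or Lemma~\ref{lem4} to obtain $P*g_{0}=0$ for a generator $g_{0}\notin A^{2}$, and then aim for $P*A=0$, so that $\mathbb F_{p}P$ is an ideal with trivial action, forcing right nilpotency via \cite{Ba}. That part is fine.

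The genuine gap is your third paragraph. Your plan there is to extract $P*P=P*Q=0$ from the relation $R\circ S=S\circ R\circ P$ by evaluating $(R\circ S)*c=(S\circ R\circ P)*c$ at $c\in\{P,Q,g_{0}\}$. If you actually carry this out (say in the case $R\notin A^{2}$, so $P*R=0$), the identity reduces to $R*(S*c)-S*(R*c)=P*c$ modulo $A^{5}$. At $c=P$ this gives $R*(S*P)=P*P$; but $S*P=P*S$ and centrality of $P$ gives $R*(P*S)=P*(R*S)$, and expanding $R*S=P+S*R+S*P$ then returns $P*(R*S)=P*P$, a tautology. At $c=Q$ the same manoeuvre, using centrality of $Q$, collapses to $Q*(S*P)=0$, which is information, but not $P*Q=0$. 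So the ``small integer coefficients'' you anticipate do not materialise; the relation you chose is, after using centrality, essentially equivalent to what you are trying to prove. Your auxiliary claim that $(A^{2},+,*)$ satisfies $A^{2}*(A^{2}*A^{2})=0$ in the subcase $R\in A^{2}$ is also unjustified: a commutative radical ring of order $p^{3}$ can have nilpotency index $4$.

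The paper avoids all of this with a one–line observation you have the ingredients for but do not use. Since $P$ is central, $(P\circ x)*y=(x\circ P)*y$ gives
\[
P*(x*y)=x*(P*y)\quad\text{for all }x,y\in A.
\]
Now $P*y\in P*(\mathbb F_{p}g_{0}+A^{2})=P*A^{2}\subseteq A^{4}$ (using $P*g_{0}=0$ and $P*A^{i}\subseteq A^{i+2}$), hence $P*(x*y)\in x*A^{4}\subseteq A^{5}=0$. Thus $P*A^{2}=0$ directly, with no case split on whether $A^{4}=0$ and no need to isolate $P*P$ or $P*Q$ separately. Combined with $P*g_{0}=0$ this yields $P*A=0$, and the rest of your argument goes through.
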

	\begin{proof} 
		Suppose on the contrary that $Q\in A^{2}$. If $R\notin A^{2}$, then $P*R=0$  by Lemma \ref{lem3}. Observe that $A=\mathbb F_{p}R+A^{2}$, since $A/A^{2}$ has cardinality $p$, by Lemma \ref{lem2}. Observe now that for $x,y\in A$ we have $(P\circ x)*y=(x\circ P)*y$, and  by Lemma \ref{2} we get \[P*(x*y)=x*(P*y)\subseteq x*(P*(\mathbb F_{p}R+A^{2}))=x*P*(A^{2})\subseteq x*A^{4}\subseteq A^{5}=0\] (since $P=R^{-1}\circ S^{-1}\circ R\circ S$ is in group $XIV$). Consequently, $P*A^{2}=0 $, and since $P*R=0$, then $P*A=0$. Since $P$ is a central element, we get that $I=\mathbb F_{p}P$ is an ideal in $A$. By \cite{Ba} every brace of cardinality not exceeding $p^{3}$ is right nilpotent, hence $A/I$ is  right nilpotent. Now $A*P=0$ yields that $A$ is right nilpotent.
		
		Suppose now that $R\in A^{2}$. Then by Lemma \ref{lem4} we have that $S\notin A^{2}$, $S*P=P*S=0$.  Observe that $A=\mathbb F_{p}S+A^{2}$, since $A/A^{2}$ has cardinality $p$, by Lemma \ref{lem2}. Observe now that for $x,y\in A$ we have $(P\circ x)*y=(x\circ P)*y$, and by Lemma \ref{2} we get \[P*(x*y)=x*(P*y)\subseteq x*(P*(\mathbb F_{p}S+A^{2}))=x*P*(A^{2})\subseteq x*A^{4}\subseteq A^{5}=0\]  (since $P=R^{-1}\circ S^{-1}\circ R\circ S$ is in group XIV). Consequently, $P*A^{2}=0 $, and since $P*S=0$, then $P*A=0$. Since $P$ is a central element, we get that $I=\mathbb F_{p}P$ is an ideal in $A$. By \cite{Ba} every brace of cardinality not exceeding $p^{2}$ is right nilpotent, hence $A*R$ is  right nilpotent. Now $A*P=0$ yields that $A$ is right nilpotent.
	\end{proof}

	\begin{proposition} 
		Let $A$ be an $\mathbb F_{p}$-brace of cardinality $p^{4}$, where $p>3$ is a prime number. Suppose that the multiplicative group $(A, \circ )$ is the group $XIV$. Suppose that $Q\notin A^{2}$. Then $A$ is a right nilpotent brace.
	\end{proposition}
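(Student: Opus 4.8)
The plan is to produce a non-zero central ideal of $A$ on which $A$ acts trivially, so that the corresponding quotient --- a brace of cardinality at most $p^{3}$ --- is right nilpotent by \cite{Ba}, and this propagates back to $A$; concretely, the goal is to prove $P*A=A*P=0$ and then factor out $\mathbb{F}_{p}P$.

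I would first record the structural input already available. Applying Lemma~\ref{2} to the relation $P=R^{-1}\circ S^{-1}\circ R\circ S$ of the group XIV gives $P\in A^{2}$ and $P*A^{i}\subseteq A^{i+2}$, so $P*A\subseteq A^{3}$, $P*A^{2}\subseteq A^{4}$, $P*A^{3}=0$. Because $P$ is central in $(A,\circ )$, the element $P^{-1}\circ x^{-1}\circ P\circ x$ is trivial for each $x$, so substituting it into Lemma~\ref{2} and discarding the iterated $*$-products that land in $A^{5}=0$ yields the key identity $P*(x*c)=x*(P*c)$ for all $x,c\in A$; this is what will turn ``$P*A$ small'' into ``$P*A^{2}=0$''.

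Next I would dispose of the degenerate cases. If $P\in A^{4}$ then $A*P\subseteq A^{5}=0$ and, $P$ being central, $x*P=P*x\in A*A^{4}=0$, so $P*A=A*P=0$ and we finish as above; if $A^{3}=0$ then $P\neq 0$ is a central element of $A^{2}$ with $A^{3}=0$, so Lemma~\ref{lem1} applies. Inspecting the possible cardinalities of $A^{2}$ (it is $\neq A$, else $A=A^{2}=\dots=A^{5}=0$) together with Lemma~\ref{lem1}, \cite{Ba} and $A^{5}=0$ reduces everything to the case $|A^{2}|=p^{3}$, $A^{4}\neq 0$, $P\notin A^{4}$. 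Then $A/A^{2}$ is one-dimensional, so $A=\mathbb{F}_{p}Q+A^{2}$ since $Q\notin A^{2}$; and as $Q$ is central in $(A,\circ )$, replacing $R,S$ by suitable products $R\circ Q^{i}\circ S^{j}$, $S\circ Q^{k}$ preserves all defining relations of XIV and lets me assume $R,S\in A^{2}$ as well.

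The core step --- and the one I expect to be the main obstacle --- is to show $P*Q=0$. Using that $Q$ is central and $R,S\in A^{2}$ we get $R*Q=Q*R\in Q*A^{2}\subseteq A^{3}$ and $S*Q=Q*S\in A^{3}$; expanding $P*Q=(R^{-1}\circ S^{-1}\circ R\circ S)*Q$ by Lemma~\ref{2} then collapses it, since the two leading terms $R*(S*Q)$ and $S*(R*Q)$ lie in $A*A^{3}=A^{4}$ and every further iterated term lies in $A*A^{4}=0$, to $P*Q=R*(S*Q)-S*(R*Q)\in A^{4}$. Hence $P*A=\mathbb{F}_{p}(P*Q)+P*A^{2}\subseteq A^{4}$ (using $A=\mathbb{F}_{p}Q+A^{2}$ and additivity of $*$ in the right slot), and then $P*(x*c)=x*(P*c)$ gives $P*A^{2}\subseteq A*(P*A)\subseteq A*A^{4}=0$. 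To kill the remaining $P*Q\in A^{4}$ I would argue that $J=\mathbb{F}_{p}P+A^{4}$ is a proper ideal of $A$ with $A*J\subseteq A^{4}$ and $A^{4}*A=0$ --- the latter because $A^{4}*Q=Q*A^{4}\subseteq A*A^{4}=0$ and $A^{4}*A^{2}=0$, the last equality coming from strong nilpotency of the order-$p^{3}$ sub-brace $A^{2}$ (right nilpotent by \cite{Ba}) once $A^{4}$ is located in its lower chain --- so that right nilpotency of $A/J$, a brace of order at most $p^{2}$, lifts to $A$ through $J$; equivalently one pins $P*Q$ to $0$ outright. Either way $P*A=A*P=0$, whence $\mathbb{F}_{p}P$ is an ideal with $A*\mathbb{F}_{p}P=\mathbb{F}_{p}P*A=0$, $A/\mathbb{F}_{p}P$ has order $p^{3}$ and is right nilpotent by \cite{Ba}, and therefore so is $A$; combined with the preceding Proposition this shows that every $\mathbb{F}_{p}$-brace with multiplicative group XIV is right nilpotent. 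The genuinely delicate points are the collapse of $P*Q$ and the bookkeeping that pushes right nilpotency across $J$ (or, alternatively, that forces $P*Q=0$); the rest parallels Lemmas~\ref{lem3}--\ref{lem4} and the preceding Proposition.
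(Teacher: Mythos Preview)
Your overall strategy is the paper's: show $P*A=A*P=0$, then factor out the central ideal $\mathbb{F}_{p}P$ and invoke \cite{Ba}. You also correctly record $P*A^{i}\subseteq A^{i+2}$ and the identity $P*(x*c)=x*(P*c)$, and your computation $P*Q\in A^{4}$ (hence $P*A\subseteq A^{4}$ and $P*A^{2}=0$) is valid once $R,S\in A^{2}$.

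The genuine gap is the step ``$A^{4}*A^{2}=0$'', which you need in order to push right nilpotency of $A/J$ back up through $J=\mathbb{F}_{p}P+A^{4}$. Your justification --- that $A^{2}$ is a strongly nilpotent brace of order $p^{3}$ and that $A^{4}$ can be ``located in its lower chain'' --- does not work: $A^{4}=A*A^{3}$ is defined via the left chain of $A$, and there is no general reason it should sit inside $(A^{2})^{(k)}$ or $(A^{2})^{[k]}$ for $k$ large enough to force $A^{4}*A^{2}=0$. In fact this is precisely the obstruction that survives for group~XV (where $S\in A^{4}$ yet $S*R\neq 0$), so an argument that does not use something specific to XIV at this point cannot succeed. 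Your case reduction is also incomplete: the situations $P\notin A^{3}$ (with $A^{4}=0$ or $A^{4}\neq 0$) are not covered by ``$P\in A^{4}$'' or ``$A^{3}=0$'', and need separate treatment.

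What the paper does differently, and what makes the argument go through, is to exploit that in XIV \emph{both} $P$ and $Q$ are central. Having already arranged $R\circ Q^{j},\,S\circ Q^{i}\in A^{2}$, the paper then twists further by powers of $P$ (and in one sub-case by a power of the other generator) so that one of the modified $R',S'$ lies in $A^{3}$ or even $A^{4}$, while the relation $R'\circ S'=S'\circ R'\circ P$ persists because $P,Q$ are central. With, say, $S'\in A^{4}$ and $R'\in A^{2}$, expanding $(R'\circ S')*Q=(S'\circ R'\circ P)*Q$ kills every term except $P*Q$, giving $P*Q=0$ \emph{directly} rather than merely $P*Q\in A^{4}$; the cases $P\notin A^{3}$ are handled by the same trick, pushing $R',S'$ into $A^{3}$ and reading off $P\in A^{4}$ (or $P\in A^{3}$ when $A^{4}=0$), which triggers Lemma~\ref{lem1}. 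The extra central element $P$ is exactly what XIV has over XV, and your proof should use it.
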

	\begin{proof}
		Notice that $P\in A^{2}$ by Lemma \ref{2},  since $P=R^{-1}\circ S^{-1}\circ R\circ S$ in the group XIV. By Lemma \ref{lem2} we have that $A/A^{2}$ has cardinality $p$, so $S\circ Q^{i}\in A^{2}, R\circ Q^{j}\in A^{2}$, for some $i,j$, since $Q\notin A^{2}$. We have now the following cases.
		
		{\bf Case 1.} $P\notin A^{3}$, $A^{4}\neq 0$. Notice that this implies that $A^{2}/A^{3}$ has cardinality $p$, and so  there are  $k,l$ such that $S'=S\circ Q^{i}\circ P^{k}\in A^{3}$ and $R'= R\circ Q^{j}\circ P^{l}\in A^{3}$.  Notice that since $Q, P$ are central elements we get
		$P=R'^{-1}\circ S'^{-1}\circ R'\circ S'$, hence by Lemma \ref{2} we have $P\in A^{4}$. By Lemma \ref{lem1} we find that $A$ is right nilpotent. 
		
		{\bf Case 2.} $P\notin A^{3}$, $A^{4}= 0$. Denote by $S'=S\circ Q^{i}\in A^{2}, R'= R\circ Q^{j}\in A^{2}$. We know that $R', S'\in A^{2}$. Notice that since $Q$ is central we get $P=R'^{-1}\circ S'^{-1}\circ R'\circ S'$, hence by Lemma \ref{2} we have $P\in A^{3}$. By Lemma \ref{lem1} we find that $A$ is right nilpotent. 
		
		{\bf Case 3.} $P\in A^{3}$. Notice that $A^{4}\neq 0$ and $P\notin A^{4}$ by Lemma \ref{lem2} since $P$ is central. By Corollary \ref{5} we have $A^{i}/A^{i+1}$ has cardinality $p$ for $i=1,2, 3, 4$. Therefore, $A^{3}=\mathbb F_{p}P+A^{4}$ and $A=\mathbb F_{p}Q+A^{2}$. 
		
		Suppose that there are $R', S'$ such that $R'\circ S'=S'\circ R'\circ P$, and either $S'\in A^{2}, R'\in A^{4}$ or $S'\in A^{2}$ and $R'\in A^{4}$. We will show that $P*Q=0$.
		
		Suppose first that $S'\in A^{4}$ and $R'\in A^{2s}$. Observe that \[(R'\circ S')*Q=(S'\circ R'\circ P)*Q,\] hence $ P*Q=0 $, because $ S'*Q=Q*S'\in A^{5}=0 $, \[A*(P*Q)\subseteq A*(Q*P)\subseteq A^{5}=0,\] \[S'*(R'*Q)=S'*(Q*R')\subseteq S'*A^{3}=S'*(\mathbb F_{p}P+A^{4})=\mathbb F_{p}(S'*P)=0,\] since $S'*P=P*S'\subseteq A^{5}=0$. 
		
		Suppose now that $S'\in A^{2} $ and $ R'\in A^{4}$. Observe that $(R'\circ S')*Q=(S'\circ R'\circ P)*Q$, hence $ P*Q=0 $ since $R'*Q=Q*R'\in A^{5}=0$, \[A*(P*Q)\subseteq A*(Q*P)\subseteq A*A^{4}\subseteq A^{5}=0,\] \[R'*(S'*Q)=R'*(Q*S')\subseteq R'*A^{3}=R'*(\mathbb F_{p}P+A^{4})=\mathbb F_{p}(R'*P)=0,\] since $R'*P=P*R'\subseteq A^{5}=0$. 
		
		We will now show that $P*Q=0$ implies that $A$ is a right nilpotent brace. Observe that $P*A=P*(F_{p}Q+A^{2})=P*A^{2}$, so we need to show that $P*A^{2}=0$. Notice that $(P\circ x)*y=(x\circ P)*y$ since $P$ is central. Consequently, \[P*(x*y)=x*(P*y)=x*(P*(\mathbb F_{p}Q+A^{2}))= \mathbb F_{p}x*(P*Q)+ x*(P*(A^{2}))=0,\] since $P*A^{2}\subseteq A^{4}$ by Corollary \ref{4}, and $P*Q=Q*P\in A^{4}$. 
		
		We will now show that it is possible to find such $R', S'$. We know that $R\circ Q^{j}, S\circ Q^{i} \in A^{2}$. Suppose that $R\circ Q^{j}\in A^{3}$. Then $R\circ Q^{j}\circ P^{k}\in A^{4}$ for some $k$, and we can take $R'=R\circ Q^{j}\circ P^{k}$ and $S'=S\circ Q^{i}$ (since $P, Q$ are central). If $S\circ Q^{i}\in A^{3}$, then $S\circ Q^{i}\circ P^{k}\in A^{4}$ for some $k$, and we can take $S'=S\circ Q^{i}\circ P^{k}$ and $R'=R\circ Q^{j}$.  
		
		If $S\circ Q^{i}, R\circ Q^{j}\notin A^{3}$, then $(S\circ Q^{i})\circ ( R\circ Q^{j})^{t}\in A^{3}$ for some $t$,
		 and then $(S\circ Q^{i})\circ ( R\circ Q^{j})^{t}\circ P^{k}\in A^{4}$ for some $k$. 
		 Therefore, we can take $S'=(S\circ Q^{i})\circ ( R\circ Q^{j})^{t}\circ P^{k}$ and $R'=R\circ Q^{j}$. Observe that $R'\circ S'=S' \circ R'\circ P$, as required. 
	\end{proof}
	
	\begin{corollary}
		Let  $A$ is an $\mathbb F_{p}$-brace for a prime number  $p>3$.  If the  multiplicative group of $A$ is isomorphic to the group XIV, then $A$ is  right nilpotent. 
	\end{corollary}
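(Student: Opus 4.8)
The plan is simply to assemble the two propositions proved immediately above in this section. First I would record the trivial observation that if the multiplicative group $(A,\circ)$ of an $\mathbb F_{p}$-brace $A$ is isomorphic to the group XIV, then $A$ has cardinality $p^{4}$, because the underlying set of $A$ is that of $(A,\circ)$ and $|\mathrm{XIV}|=p^{4}$. Consequently both preceding propositions — which are stated for $\mathbb F_{p}$-braces of cardinality $p^{4}$ whose multiplicative group is XIV, with $p>3$ — apply to $A$ with no extra hypotheses to verify.

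The argument is then a one-line dichotomy. Suppose, for contradiction, that $A$ is not right nilpotent. The first of the two propositions above forces $Q\notin A^{2}$. But the second proposition asserts that any such $A$ with $Q\notin A^{2}$ is right nilpotent, contradicting the assumption. Hence $A$ is right nilpotent. One can equally phrase this without contradiction: if $Q\in A^{2}$ then $A$ is right nilpotent by the contrapositive of the first proposition, while if $Q\notin A^{2}$ then $A$ is right nilpotent by the second proposition; in either case the conclusion holds.

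I would also add a brief remark that this yields the stronger statement announced at the beginning of Section \ref{S8}: by Rump's theorem quoted earlier (a brace of cardinality $p^{n}$ satisfies $A^{n+1}=0$, so every finite $p$-brace is left nilpotent), the brace $A$ is left nilpotent; being also right nilpotent, it is strongly nilpotent, since a brace is strongly nilpotent precisely when it is simultaneously left and right nilpotent.

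As for the main obstacle: there is essentially none at the level of the corollary, which is a purely formal consequence of the two propositions. All the genuine content has already been carried out there and in the supporting lemmas of this section — splitting according to whether $R$ (respectively $S$) lies in $A^{2}$, and whether $P$ lies in $A^{3}$ or in $A^{4}$, and repeatedly invoking the commutator identity of Lemma \ref{2}, the central-ideal argument of Lemma \ref{lem1}, and the fact that every brace of cardinality at most $p^{3}$ is automatically right nilpotent. The only point one must check for the corollary itself, namely that the cardinality and multiplicative-group hypotheses of those propositions are met, is immediate, as noted above.
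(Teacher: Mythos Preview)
Your proposal is correct and matches the paper's approach: the corollary is stated there without an explicit proof because it is an immediate consequence of the two preceding propositions, exactly via the dichotomy you describe. Your added remark about strong nilpotence (combining right nilpotence with Rump's left-nilpotence result for $p$-braces) is also accurate and in line with the announcement at the start of Section~\ref{S8}.
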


	\section{Conclusions and Applications}
	In this section we give some motivation for constructing concrete example of braces of a given cardinality. Two main applications of braces are set-theoretic solutions and Hopf-Galois extensions. The connection between braces and Hopf-Galois extensions was first observed by David Bachiller in \cite{DB}. This topic was subsequently investigated in \cite{SVB}, \cite{kayvan} and many other papers by Truman, Kohl, Byott, Childs and others. The following exposition follows the above literature on the topic.
	
	\subsection{Hopf-Galois Extensions Corresponding to a Brace}\label{SS1}
	All Hopf-Galois structures of type $ (A,+) $ arise as regular subgroups of $ \mathrm{Hol}(A,+) $, cf. \cite{DB, Ba, kayvan, kayvan2} for relevant definitions and further details. Now for each brace $ (A,+,\circ) $ we have that $ (A,\circ) $ embeds in $ \mathrm{Hol}(A,+) $ as a regular subgroup through the map 
	\[m: (A,\circ) \longrightarrow \mathrm{Hol}(A,+), \ \ \ a \longmapsto a\lambda_{a}. \] 
	Isomorphic braces correspond to regular subgroups which are conjugate by an element of $ \mathrm{Aut}(A,+) $ inside $ \mathrm{Hol}(A,+) $. On the other hand each regular subgroup $ G\subset \mathrm{Hol}(A,+) $ gives a bijection
	\[\psi : G\longrightarrow (A,+), \ \ \ g\longmapsto g(0), \]
	through which we can define a multiplication on $  (A,+) $ by 
	\[a\circ b = \psi\left(\psi^{-1}(a)\psi^{-1}(b)\right) \]
	which yields a brace $ (A,+,\circ) $.
	
	
	Therefore, given a brace $ (A,+,\circ) $ and an automorphism $ \gamma \in \mathrm{Aut}(A,+) $, we can create, another isomorphic brace $ (A,+,\circ_{\gamma}) $ by embedding 
	\[ 
	\begin{tikzcd}[row sep=2.5em , column sep=2.5em]
		\left(A,\circ\right) \arrow[]{r}{m}  & G \subset  \mathrm{Hol}\left(A,+\right) \arrow[]{d}{C_{\gamma}}[swap]{\wr} & & \\   & G_{\gamma}=\gamma G\gamma^{-1} \arrow[]{r}{\psi} & (A,+). &
	\end{tikzcd} \]
	The new circle operation is through
	\[a\circ_{\gamma} b = \psi\left(\psi^{-1}(a)\psi^{-1}(b)\right) \]
	in terms of holomorph. This corresponds to defining 
	\[	a\circ_{\gamma} b = \gamma^{-1}\left(\gamma(a)\circ\gamma(b)\right) \]
	as in this case $ (A,+,\circ) $ and $ (A,+,\circ_{\gamma}) $ are isomorphic as braces, so they correspond to regular subgroups inside holomorph which are conjugate by $ \gamma $.
	
	Now if $ \gamma \in \mathrm{Aut}(A,+,\circ) $, then $ (A,+,\circ) $ and $ (A,+,\circ_{\gamma}) $ are the same and that means $ G_{\gamma}=G $ inside the holomorph. For a brace $ (A,+,\circ) $ and for each coset representative $ \gamma \in \mathrm{Aut}(A,+)/\mathrm{Aut}(A,+,\circ) $ we find a distinct Hopf-Galois structure $ (A,+,\circ_{\gamma}) $. As we move through the braces with additive group $ (A,+) $ we get all the Hopf-Galois structures of type $ (A,+) $.
	
	Therefore, we can summarise on how to construct Hopf-Galois extensions corresponding to a given brace as follows:
	\begin{enumerate}
		\item Let $(A, +,  \circ)$ be a brace.
		\item Let $\gamma _{i}$ be the automorphisms of the abelian group $(A, +)$.
		\item Check which $\gamma _{i}$ are also automorphisms of  the whole brace $(A,+, \circ)$.
		\item We find coset representatives of $\mathrm{Aut}(A,+)/\mathrm{Aut}(A,+,\circ) $, we call them $\xi _{1}, \xi_{2}, \ldots $. Recall that $ (A,+,\circ_{\alpha}) $ and $ (A,+,\circ_{\beta }) $ are the same in $\mathrm{Aut}(A,+)/\mathrm{Aut}(A,+,\circ) $ if and only if  $\alpha =\beta \gamma $ for some $ \gamma \in \mathrm{Aut}(A,+,\circ) $ (so $\alpha (a)=\beta (\gamma (a))$ for all $a\in A$).
		\item For the automorphisms $\xi _{i}$ of the group $(A, +)$ from our list above we find the brace $(A_{\xi_{i}}, +, \circ_{\xi_{i}})$, with the same set $A_{\xi _{i}}$ as the set $A$ and with the same addition and with the multiplication \[a\circ_{\xi_{i}}b=\xi_{i}^{-1}(\xi_{i}(a)\circ \xi_{i}(b)).\]
		\item For the brace $A_{\xi _{i}}$ the corresponding Hopf-Galois extension is the regular subgroup $(a, \lambda _{a})$ (denoted above as $a\lambda _{a}$) of the holomorph $\mathrm{Hol}(A, +)$ where \[\lambda _{a}(b)=a\circ _{\xi _{i}}b-a.\]
	\end{enumerate}
	Since we now know how to construct Hopf-Galois extensions corresponding to the given brace we can now obtain all Hopf-Galois extensions of type $(A, +)$ with Galois group $G$. Following the beginning of this chapter it can be summarised in the following way.
	\begin{enumerate}
		\item Find all non-isomorphic braces $(B, +, \circ )$ whose additive group $(B, +)$ is isomorphic to the group $(A, +)$ and whose multiplicative group $(B, \circ)$ is isomorphic to $G$. 
		\item We list these non-isomorphic braces as $B_{1}, B_{2},\ldots $. All these braces have the same additive group, which is isomorphic to $(A, +)$,  and the same multiplicative group, which is isomorphic to $G$.
		\item For each brace $B_{i}$ from our list we will construct corresponding Hopf-Galois extensions.
		\item To get all Hopf-Galois extensions of type $(A, +)$ with Galois group $G$ we need to collect all the Hopf-Galois extensions constructed for every brace $B_{i}$. All these Hopf-Galois extensions are distinct.
	\end{enumerate} 
	The above reasoning can be applied to any brace, so in particular can be applied to all braces constructed in our paper. This would involve some calculations to characterise automorphisms of the constructed braces, so we omitted the calculations and only explained the main ideas in this chapter. 

	{\bf Acknowledgements.} The second author is supported by the EPSRC grants EP/V008129/1 and EP/R034826/1.

\end{document}